\theoremstyle{plain}
\newtheorem{lma}{Lemma}[section]
\crefname{lma}{Lemma}{Lemmata}
\newtheorem{thm}[lma]{Theorem}
\crefname{thm}{Theorem}{Theorems}
\newtheorem{cor}[lma]{Corollary}
\crefname{cor}{Corollary}{Corollaries}
\newtheorem{prp}[lma]{Proposition}
\crefname{prp}{Proposition}{Propositions}
\theoremstyle{definition}
\newtheorem{pgr}[lma]{}
\crefname{pgr}{Paragraph}{Paragraphs}
\newtheorem{dfn}[lma]{Definition}
\crefname{dfn}{Definition}{Definitions}
\theoremstyle{remark}
\newtheorem{rmk}[lma]{Remark}
\crefname{rmk}{Remark}{Remarks}
\newtheorem{exa}[lma]{Example}
\crefname{exa}{Example}{Examples}
\newtheorem{qst}[lma]{Question}
\crefname{qst}{Question}{Questions}
\newtheorem{ntn}[lma]{Notation}
\crefname{ntn}{Notation}{Notations}
\newcounter{theoremintro}
\theoremstyle{plain}
\newtheorem{thmIntro}[theoremintro]{Theorem}
\crefname{thmIntro}{Theorem}{Theorems}
\newtheorem{prpIntro}[theoremintro]{Proposition}
\crefname{prpIntro}{Proposition}{Propositions}
\newtheorem{dfnIntro}[theoremintro]{Definition}
\def\today{\number\day\space\ifcase\month\or   January\or February\or
   March\or April\or May\or June\or   July\or August\or September\or
   October\or November\or December\fi\   \number\year}
\newcommand{\NN}{{\mathbb{N}}}
\newcommand{\KK}{{\mathbb{K}}}
\newcommand{\RR}{{\mathbb{R}}}
\newcommand{\Bdd}{{\mathcal{B}}}
\newcommand{\spec}{{\mathrm{sp}}}
\newcommand{\supp}{{\mathrm{supp}}}
\newcommand{\ca}{$\mathrm{C}^*$-algebra}
\newcommand{\CatCu}{\ensuremath{\mathrm{Cu}}}
\newcommand{\axiomO}[1]{(O#1)}
\newcommand{\OP}{\mathcal{O}}
\newcommand{\stHom}{${}^*$-homomorphism}
\newcommand{\soft}{{\rm{soft}}}
\newcommand{\andSep}{\,\,\,\text{ and }\,\,\,}
\newcommand{\CuSgp}{$\CatCu$-semi\-group}
\newcommand{\CuMor}{$\CatCu$-morphism}
\DeclareMathOperator{\QT}{QT}
\DeclareMathOperator{\Cu}{Cu}
\DeclareMathOperator{\linSpan}{span}
\DeclareMathOperator{\Ann}{Ann}
\newcommand\wh[1]{\hstretch{2}{\widehat{\hstretch{.5}{#1}}}}
\title{Soft operators in C*-algebras}
\date{\today}
\author{Hannes Thiel}
\address{Hannes~Thiel, 
Department of Mathematical Sciences, Chalmers University of Technology and University of
Gothenburg, Gothenburg SE-412 96, Sweden.}
\email{hannes.thiel@chalmers.se}
\urladdr{www.hannesthiel.org}
\author{Eduard Vilalta}
\address{Eduard Vilalta,
Departament de Matem\`{a}tiques,
Universitat Aut\`{o}noma de Barcelona,
08193 Bellaterra, Barcelona, Spain}
\email{eduard.vilalta@uab.cat}
\urladdr{www.eduardvilalta.com}
\thanks{
The first named author was partially supported by the Knut and Alice Wallenberg Foundation (KAW 2021.0140).
The second named author was partially supported by MINECO (grant No.\ PID2020-113047GB-I00 and No.\ PRE2018-083419), and by the Departament de Recerca i Universitats de la Generalitat de Catalunya (grant No.\ 2021-SGR-01015).
}
\begin{document}

\begin{abstract}
We say that a \ca{} is soft if it has no nonzero unital quotients, and we connect this property to the Hjelmborg-R{\o}rdam condition for stability and to property (S) of Ortega-Perera-R{\o}rdam.
We further say that an operator in a \ca{} is soft if its associated hereditary subalgebra is, and we provide useful spectral characterizations of this concept.

Of particular interest are \ca{s} that have an abundance of soft elements in the sense that every hereditary subalgebra contains an almost full soft element.
We show that this property is implied by the Global Glimm Property, and that every \ca{} with an abundance of soft elements is nowhere scattered. 
This sheds new light on the long-standing Global Glimm Problem of whether every nowhere scattered \ca{} has the Global Glimm Property.
\end{abstract}

\maketitle

\section{Introduction}

A recurring property in the study of non-simple \ca{s} that ensures sufficient noncommutativity is the notion of having no nonzero elementary ideal-quotients; 
see, for example, \cite{EllRor06Perturb, RobTik17NucDimNonSimple, AntPerRobThi22CuntzSR1}. 
Given the stark contrast between such \ca{s} and those that are scattered (as defined by Jensen in \cite{Jen77ScatteredCAlg}), the property was termed \emph{nowhere scatteredness} in \cite{ThiVil21arX:NowhereScattered}.

Despite this notion being the natural substitute for simplicity in a variety of scenarios, one oftens finds oneself working with the more technical condition of having the \emph{Global Glimm Property}. 
This property, introduced by Kirchberg and R{\o}rdam in \cite{KirRor02InfNonSimpleCalgAbsOInfty}, implies nowhere scatteredness and, in fact, it is known that both notions agree for a wide class of \ca{s}. 
However, whether these two conditions are the same in general remains an open problem, known as the Global Glimm Problem; 
see \cite[Section~1]{ThiVil22arX:Glimm} for an overview on the subject. 
In essence, and as in many other problems in operator algebras, the Global Glimm Problem asks if a natural condition and its more useful (albeit more technical) strengthening coincide.

A new approach to attacking this problem was developed in \cite{ThiVil22arX:Glimm}, where the statement of the Global Glimm Problem was reformulated in terms of the Cuntz semigroup, a powerful invariant for \ca{s} introduced by Cuntz in \cite{Cun76ContinuitySeminorms} and later studied more abstractly by Coward, Elliott, and Ivanescu in \cite{CowEllIva08CuInv}. The theory of Cuntz semigroups, which was further developed in \cite{AntPerThi18TensorProdCu, AntPerThi20AbsBivariantCu, AntPerThi20AbsBivarII, AntPerThi20CuntzUltraproducts, AntPerRobThi21Edwards}, has seen many important applications in the structure and classification of \ca{s};
see, for example, \cite{Tom08ClassificationNuclear, Rob12LimitsNCCW, Win12NuclDimZstable, RobTik17NucDimNonSimple, Thi20RksOps, AntPerRobThi22CuntzSR1}.
For a recent introduction to the theory we refer to \cite{GarPer23arX:ModernCu}.

In order to exploit the approach from \cite{ThiVil22arX:Glimm}, it becomes imperative to deepen our understanding of the Global Glimm Property. In this paper, we do so by characterizing the property in terms of \emph{soft elements}.

\begin{dfnIntro}[\ref{dfn:SoftElement}]
Given a \ca{} $A$, we say that a positive element $a\in A_+$ is \emph{soft} if no nonzero quotient of $\overline{aAa}$ is unital.
\end{dfnIntro}

This definition is in analogy to the notion of having a soft Cuntz class, as introduced in \cite{AntPerThi18TensorProdCu}, and draws inspiration from the characterization of stability for purely infinite \ca{s} obtained in \cite[Theorem~4.24]{KirRor00PureInf}: 
A $\sigma$-unital, purely infinite \ca{} is stable if and only if none of its nonzero quotients is unital. 
In fact, despite never having a name, the notion of softness appears throughout the literature. 
For example, one could restate part of \cite[Theorem~B]{BosGabSimWhi22NucDimOStableMaps} by saying that a separable, nuclear $\mathcal{O}_\infty$-stable \ca{} has finite decomposition rank if and only if every positive element in $A$ is soft.

We begin our investigation by first developing the basic theory of softness for \ca{s}, where we say that a \ca{} is \emph{soft} if it has no nonzero, unital quotient;
see \cref{dfn:Soft}.
We relate this property to the Hjelmborg-R{\o}rdam condition for stability and to property~(S) of Ortega-Perera-R{\o}rdam; 
see \cref{prp:RelSoft}.
We then give a spectral characterization of softness:

\begin{prpIntro}[\ref{prp:CharSoftElement}]
A positive element $a$ in a \ca{} $A$ is soft if and only if for every closed ideal $I \subseteq A$, either $a \in I$ or the spectrum of $a+I \in A/I$ has $0$ as a limit point.
\end{prpIntro}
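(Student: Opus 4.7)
My approach is to deduce the proposition from two standard facts. The first is a description of the quotients of $\overline{aAa}$: every closed ideal $J$ of $\overline{aAa}$ has the form $I \cap \overline{aAa}$ for some closed ideal $I \subseteq A$ (for instance, $I = \overline{AJA}$), and the quotient map $A \to A/I$ restricts to a surjection $\overline{aAa} \twoheadrightarrow \overline{(a+I)(A/I)(a+I)}$ with kernel $I \cap \overline{aAa}$. Thus, up to isomorphism, every quotient of $\overline{aAa}$ is of the form $\overline{(a+I)(A/I)(a+I)}$ for some closed ideal $I$ of $A$, and such a quotient is nonzero if and only if $a \notin I$.

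The second ingredient is a spectral criterion for unitality of hereditary subalgebras: given a positive element $b$ of a \ca{} $B$, the hereditary subalgebra $\overline{bBb}$ is unital if and only if $0$ is not a limit point of $\spec(b)$. The ``if'' direction follows by applying functional calculus to the continuous characteristic function of $\spec(b) \setminus \{0\}$ (which vanishes at $0$ and is continuous on $\spec(b)$ since $0$ is isolated or absent), producing a projection $\chi(b) \in C^*(b)$ with $\chi(b) b = b$ that acts as the unit of $\overline{bBb}$. For the ``only if'' direction, I would use that the natural approximate unit $f_n(b) \in C^*(b)$ of $\overline{bBb}$ must converge in norm to the unit of $\overline{bBb}$, whence this unit lies in the closed subalgebra $C^*(b) \cong C_0(\spec(b) \setminus \{0\})$, forcing the latter algebra to be unital and so $\spec(b) \setminus \{0\}$ to be compact.

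Combining these, the condition that $a$ is soft---that no nonzero quotient of $\overline{aAa}$ is unital---translates directly into the requirement that for every closed ideal $I \subseteq A$ with $a \notin I$, the hereditary subalgebra $\overline{(a+I)(A/I)(a+I)}$ is non-unital, which by the spectral criterion means $0$ is a limit point of $\spec(a+I)$. This is exactly the characterization stated in the proposition. I do not foresee any substantive obstacle: the proof is essentially a careful combination of the two standard equivalences above, with the only point deserving attention being the observation that the quotients indexed by ideals $I$ containing $a$ are trivially zero and hence do not impose any condition on softness.
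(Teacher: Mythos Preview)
Your argument is correct. The two ingredients you invoke are indeed standard: the bijection between closed ideals of $\overline{aAa}$ and intersections $I\cap\overline{aAa}$ (together with $\overline{aAa}/(I\cap\overline{aAa})\cong\overline{(a+I)(A/I)(a+I)}$) is a well-known consequence of $\overline{aAa}$ being hereditary, and your spectral criterion for unitality of $\overline{bBb}$ is fine---the step ``an approximate unit converges in norm to the unit'' works because applying the approximate unit to the unit itself gives $\|f_n(b)-u\|=\|f_n(b)u-u\|\to 0$.

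The paper, however, takes a different route. Rather than proving the equivalence directly, it establishes a cycle through two auxiliary conditions: (3)~$a\lhd g_\varepsilon(a)$ for every $\varepsilon>0$, where $g_\varepsilon$ is the ``bump'' function supported on $[0,\varepsilon]$; and (4)~for every $\varepsilon>0$ there exists $b\in\overline{aAa}_+$ with $b\perp(a-\varepsilon)_+$ and $a\lhd b$. The implication (1)$\Rightarrow$(2) in the paper is essentially your ``if'' direction of the spectral criterion, but (2)$\Rightarrow$(1) is obtained by passing through (3) and (4) and then arguing that a nonzero unital quotient of $\overline{aAa}$ would force the image of $a$ to be invertible, contradicting~(4). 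Your approach is more economical for the bare biconditional stated in the introduction; the paper's cyclic proof costs a bit more but simultaneously delivers the characterizations~(3) and~(4), which are used repeatedly later (for instance in \cref{prp:OrderZeroMapPresSoft} and \cref{prp:ConstructSoft}).
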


\cref{sec:Shades} is devoted to the introduction of three tailored definitions of softness for Cuntz classes, which we call \emph{functional}, \emph{weak}, and \emph{strong} softness; 
see \cref{dfn:soft}.
Strong softness implies weak softness, which in turn implies functional softness, and all three notions agree in Cuntz semigroups of residually stably finite \ca{s};
see \cref{prp:soft}.
In that case, softness of an element in the \ca{} can be characterized in terms of its Cuntz class:

\begin{prpIntro}[\ref{prp:SoftCaVsCu}]
Let $A$ be a residually stably finite \ca{}.
Then an element $a\in A_+$ is soft if and only if its Cuntz class $[a] \in \Cu(A)$ is strongly soft (equivalently: weakly soft, functionally soft).
\end{prpIntro}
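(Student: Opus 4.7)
The approach is to combine the spectral characterization of soft elements in \cref{prp:CharSoftElement} with the standard correspondence between closed two-sided ideals of $A$ and order-ideals of $\Cu(A)$. Since $A$ is residually stably finite, \cref{prp:soft} tells us that the three notions of softness in $\Cu(A)$ coincide, so it suffices to prove ``$a$ soft iff $[a]$ is functionally soft'', and the remaining equivalences come for free. I would split the argument along these two implications.

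\textbf{Forward direction.} Suppose $a \in A_+$ is soft. By \cref{prp:CharSoftElement}, for every closed ideal $I \subseteq A$ with $a \notin I$, the spectrum of $a+I$ has $0$ as a limit point. Translating to the Cuntz semigroup via the ideal correspondence, this says that in each quotient $\Cu(A/I)$ the element $[(a-\varepsilon)_+]$ is strictly below $[a]$ for every $\varepsilon > 0$; in other words, the image of $[a]$ is not represented by a nonzero projection in any ideal-quotient of $\Cu(A)$. This is exactly the ideal-quotient condition expressing functional softness of $[a]$ (modulo checking that the chosen definition of functional softness from \cref{dfn:soft} is equivalent to this formulation in the residually stably finite setting, which itself uses \cref{prp:soft}).

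\textbf{Reverse direction.} Suppose $[a]$ is functionally soft in $\Cu(A)$. Since functional softness is preserved by the induced $\CatCu$-morphism $\Cu(A) \to \Cu(A/I)$, the image of $[a]$ remains soft in $\Cu(A/I)$ for every closed ideal $I \subseteq A$ with $a \notin I$. If $0$ failed to be a limit point of $\spec(a+I)$, there would be a spectral gap above $0$, yielding a spectral projection and hence a compact Cuntz class for $[a+I]$. In the stably finite algebra $A/I$, a compact Cuntz class is represented by a nonzero projection, contradicting softness of the image of $[a]$. Therefore the spectral condition of \cref{prp:CharSoftElement} holds in every relevant quotient, so $a$ is soft.

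\textbf{Main obstacle.} The delicate point is the precise matching, in each quotient $A/I$, between the spectral condition ``$0$ is a limit point of $\spec(a+I)$'' and the formal softness of the Cuntz class $[a+I]$. Residual stable finiteness is indispensable here: it guarantees that compact Cuntz classes in every quotient are genuinely represented by projections, so that a spectral gap at $0$ is faithfully detected at the level of the Cuntz semigroup. Without this hypothesis, non-softness of $[a]$ at the Cuntz level need not correspond to an honest spectral gap, and the equivalence would break.
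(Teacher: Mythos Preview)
Your overall strategy is sound and gives a correct proof, but it differs from the paper's route in a way worth noting. The paper does \emph{not} use residual stable finiteness in the forward direction: it shows directly that $a$ soft $\Rightarrow$ $[a]$ strongly soft, hypothesis-free, by invoking condition~(4) of \cref{prp:CharSoftElement} to produce, for each $\varepsilon>0$, an element $b\in\overline{aAa}_+$ with $b\perp(a-\varepsilon)_+$ and $a\lhd b$; setting $t:=[b]$ gives $[(a-\varepsilon)_+]+t\leq[a]\leq\infty t$, and \cref{prp:CharStrongSoft} finishes. Only the reverse implication uses the hypothesis, via the intermediate condition~(3) (the trichotomy with properly infinite projections). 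Your approach instead routes both directions through functional softness and the characterization ``$[a+I]$ is never a nonzero compact element'', which requires residual stable finiteness on both sides. This is legitimate, but it obscures the fact that one implication is unconditional.

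There is one step in your forward direction that deserves more care. You write that ``$0$ is a limit point of $\spec(a+I)$'' translates to ``$[a+I]$ is not compact in $\Cu(A/I)$''. This is \emph{false} in general (think of a positive element with spectrum $[0,1]$ in $\mathcal{O}_2$), and it is precisely here that stable finiteness of $A/I$ is needed: if $[a+I]$ were compact, say $[a+I]=[(a+I-\varepsilon)_+]$, then $g_\varepsilon(a+I)$ would be a nonzero element orthogonal to $(a+I-\varepsilon)_+$ inside $\overline{(a+I)(A/I)(a+I)}$, giving $[g_\varepsilon(a+I)]+[(a+I-\varepsilon)_+]\leq[a+I]\ll[a+I]$ and hence $[g_\varepsilon(a+I)]=0$ by stable finiteness, a contradiction. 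You gesture at this (``modulo checking\ldots in the residually stably finite setting''), but since this is exactly where the hypothesis does its work, it should be made explicit rather than deferred.
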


In order to connect the Global Glimm Problem to soft operators, we introduce the notion of \emph{abundance of soft elements} for a \ca{} $A$, which means that for every $a \in A_+$ and $\varepsilon >0$ there exists a positive, soft operator $b$ in the hereditary sub-\ca{} $\overline{aAa}$ such that the cut-down $(a-\varepsilon )_+$ is in the closed ideal generated by~$b$; 
see \cref{dfn:AbSoft}.
This definition is in the same spirit as the Global Glimm Property, which one could phrase as an `abundance of nilpotent elements';
see \cite[Theorem~3.6]{ThiVil22arX:Glimm}.
(We recall the definition of the Global Glimm Property at the beginning of \cref{sec:Glimm}.)

Although not every nilpotent element is soft, it is nevertheless true that the Global Glimm Property implies an abundance of soft elements;
see \cref{prp:2DivGivesStronglySoft}.
We also show that having an abundance of soft elements implies nowhere scatteredness (\cref{prp:AbSoftImplNWS}), which therefore breaks the Global Glimm Problem into two subquestions;
see \cref{qst:ReverseImplications}.

We introduce in \cref{dfn:AbSoft} the notion of having an \emph{abundance of strongly soft elements} for Cuntz semigroups.
Using that soft operators have strongly soft Cuntz classes, it follows that \ca{s} with an abundance of soft elements have scaled Cuntz semigroups with an abundance of soft elements.
We show that the converse also holds (\cref{prp:EquivAbundance}), although it  remains unclear if every strongly soft Cuntz class can be realized by a soft element (\cref{qst:RealizeSoftClass}).

Our notion of abundance of soft elements has close connections with the $2$-splitting property, which was introduced by Kirchberg and R{\o}rdam in \cite{KirRor15CentralSeqCharacters} to study when central sequence algebras have characters.
In \cref{prp:EquivAbundance}, we show that a \ca{} has an abundance of soft elements if and only if it has the \emph{hereditary $2$-splitting property} (see \cref{dfn:Her2Splitting}), a natural strengthening of the $2$-splitting property.

Using these characterizations, we provide a new description of the Global Glimm Property:

\begin{thmIntro}[\ref{prp:CharGlimm}]
A \ca{} has the Global Glimm Property if and only if it has and abundance of soft elements and its Cuntz semigroup is ideal-filtered.
\end{thmIntro}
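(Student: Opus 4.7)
The plan is to prove the two implications separately. For the forward direction, \cref{prp:2DivGivesStronglySoft} already establishes that the Global Glimm Property implies an abundance of soft elements, so only the ideal-filteredness of $\Cu(A)$ requires argument. The Global Glimm Property provides, for each $a \in A_+$ and $\varepsilon > 0$, a $(2,\omega)$-divisible Cuntz class below $[a]$ that generates the same ideal as $[(a-\varepsilon)_+]$. I would leverage this abundance of divisible elements to verify directly that ideals in $\Cu(A)$ admit cofinal filtering by principal ideals generated by such divisible classes, invoking where convenient the Cuntz-semigroup reformulations developed in \cite{ThiVil22arX:Glimm}.

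For the converse, assume that $A$ has an abundance of soft elements and that $\Cu(A)$ is ideal-filtered. Given $a \in A_+$ and $\varepsilon > 0$, I would first apply the abundance of soft elements to obtain a positive soft $b \in \overline{aAa}$ such that $(a-\varepsilon)_+$ lies in the closed ideal generated by $b$. The spectral characterization \cref{prp:CharSoftElement} then guarantees that the spectrum of $b + I$ has $0$ as a limit point in every quotient $A/I$ in which $b$ is nonzero. Applying continuous functional calculus, I can exhibit orthogonal positive elements $b_1, b_2 \in \overline{bAb}$ with supports concentrated in small disjoint intervals near $0$, so that $[b_1] + [b_2] \le [b] \le [a]$ in $\Cu(A)$ while each $b_i$ remains nonzero in every quotient where $b$ is nonzero.

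The main obstacle is upgrading this orthogonal splitting into the full Cuntz-semigroup reformulation of the Global Glimm Property, namely producing an element $c$ with $2[c] \le [a]$ and $[(a-\varepsilon)_+] \le \infty[c]$. A priori, the classes $[b_1]$ and $[b_2]$ need not be mutually Cuntz subequivalent, and neither is guaranteed to generate the full ideal of $[b]$. This is precisely where the ideal-filtered hypothesis enters: it should allow one to refine the spectral cut-offs, passing to appropriate Cuntz-dominating elements in the same ideal, until $[b_1]$ and $[b_2]$ become comparable and both generate the ideal generated by $b$. Setting $c := b_1$ (or a slight cut-down thereof) would then deliver the required divisibility and ideal containment. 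The delicate part of the argument, and where I expect to spend most of the technical effort, is the simultaneous control of divisibility and ideal support; I would handle it by iterating the soft-splitting step along a cofinal filtering of the ideal, combining softness (which supplies arbitrarily many disjoint spectral cut-downs of $b$) with ideal-filteredness (which keeps the ideal condition stable under this iteration), and conclude via the Cuntz-semigroup characterization of the Global Glimm Property from \cite{ThiVil22arX:Glimm}.
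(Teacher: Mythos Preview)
Your forward direction is essentially what the paper does: \cref{prp:2DivGivesStronglySoft} gives the abundance of soft elements, and ideal-filteredness is an immediate consequence of the characterization of the Global Glimm Property in \cite[Theorem~6.3]{ThiVil22arX:Glimm}, since $(2,\omega)$-divisibility is there shown to be equivalent to weak $(2,\omega)$-divisibility together with ideal-filteredness and property~(V).

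The converse direction, however, has a genuine gap. Your plan is to split a soft element $b$ spectrally into orthogonal $b_1,b_2$ and then use ideal-filteredness to force $[b_1]$ and $[b_2]$ to become comparable and to generate the full ideal of $b$. But ideal-filteredness does not provide comparability: given $v'\ll v\leq\infty x,\infty y$, it produces a common element $z\ll x,y$ with $v'\leq\infty z$, that is, a common \emph{lower} bound in the way-below sense, not a way to make $x$ and $y$ comparable or to upgrade one of them to dominate the other. Iterating soft splittings along a cofinal filtering will keep producing smaller and smaller pieces, none of which is obviously large enough to satisfy $2[c]\leq[a]$ while still generating the correct ideal; you never explain what stops this descent.

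The paper bypasses this difficulty entirely by working in the Cuntz semigroup and isolating the missing ingredient. It shows that abundance of (strongly) soft elements already implies both weak $(2,\omega)$-divisibility (\cref{prp:AbSoftImplNWS}) and property~(V) (\cref{prp:AbSoftImplV}); then the equivalence from \cite[Theorem~6.3]{ThiVil22arX:Glimm} (in its scaled form, \cref{prp:CharDivWithSoft}) combines these with ideal-filteredness to yield $(2,\omega)$-divisibility, hence the Global Glimm Property. Property~(V) is precisely the condition that lets one assemble two ``small'' divisors $d_1,d_2$ below a common $c$ into a genuine $2$-splitting $y+z\leq x$ with both $y,z$ generating the right ideal---this is the step your sketch is groping toward but never names. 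If you want to rescue your direct approach, the cleanest route is to first prove that your soft-splitting setup verifies property~(V), and then invoke \cite{ThiVil22arX:Glimm}; but at that point you have reproduced the paper's argument.
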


In upcoming work with Asadi-Vasfi, \cite{AsaThiVil23pre:DimRcSoft}, we will exploit the abundance of soft elements to find bounds for some numerical \ca{ic} invariants, such as the radius of comparison and the Cuntz covering dimension introduced in \cite{ThiVil22DimCu}.

\section{Soft C*-algebras}

In this section, we introduce the notion of \emph{softness} for \ca{s};
see \cref{dfn:Soft}.
We provide useful characterizations of softness in \cref{prp:CharSoftAlg}, and we relate it to the Hjelmborg-R{\o}rdam condition for stability and to property (S) of Ortega-Perera-R{\o}rdam; 
see \cref{prp:RelSoft}.

We also prove basic permanence properties:
Softness passes to ideals, quotients and extensions (\cref{prp:PermIdealQuotExt}), to tensor products (\cref{prp:TensorProd}) and to inductive limits (\cref{prp:PermLimit}).

\begin{dfn}
\label{dfn:Soft}
We say that a \ca{} $A$ is \emph{soft} if for every proper, closed ideal $I\subseteq A$ the quotient $A/I$ is nonunital.
\end{dfn}

\begin{rmk}
\label{rmk:Soft}
Our notion of softness for \ca{s} is not related to that introduced by Farsi in \cite{Far02SoftCA}.
The latter is defined in terms of soft polynomial relations.
Our terminology is inspired by the close connection to soft elements in Cuntz semigroups (see \cref{prp:SoftCaVsCu}) and is therefore ultimately based on the notion of soft intervals introduced by Goodearl and Handelman in \cite{GooHan82Stenosis};
see also \cite[Section~5.3]{AntPerThi18TensorProdCu}.
\end{rmk}

The next result follows immediately from the definition.
It is, in some sense, the \ca{ic} analog of \cite[Proposition~5.3.16]{AntPerThi18TensorProdCu}.

\begin{prp}
\label{prp:SimpleCA}
A simple \ca{} is either unital or soft.
\end{prp}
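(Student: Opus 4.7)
The statement is immediate from the definitions, so my plan is a one-line argument together with a short justification of why that line suffices.

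The plan is to observe that a simple \ca{} $A$ has exactly two closed ideals, namely $\{0\}$ and $A$ itself, so the only \emph{proper} closed ideal is $I = \{0\}$. Consequently, the softness condition of \cref{dfn:Soft}, which quantifies over all proper closed ideals $I \subseteq A$, reduces to a single requirement: that the quotient $A/\{0\} \cong A$ be nonunital. Therefore $A$ is soft if and only if $A$ is nonunital, and the dichotomy in the statement follows.

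To write this out, I would first recall the simplicity assumption and its consequence on the ideal lattice; then distinguish the two cases according to whether $A$ has a unit. In the unital case, there is nothing to prove. In the nonunital case, the only proper closed ideal $\{0\}$ yields the nonunital quotient $A$, so \cref{dfn:Soft} is satisfied and $A$ is soft.

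There is no genuine obstacle here, so I do not anticipate a hard step; the only thing to be careful about is to explicitly note that $\{0\}$ is a proper closed ideal (which uses $A \neq 0$; if one wants to allow $A = 0$ then $A$ is vacuously soft and the statement still holds since $0$ is nonunital in our convention of requiring a nonzero unit).
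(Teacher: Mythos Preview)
Your proposal is correct and matches the paper's approach: the paper simply states that the result follows immediately from the definition, which is exactly the observation you make about the only proper closed ideal of a simple \ca{} being $\{0\}$.
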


\begin{ntn}
\label{ntn:RelationsCAlg}
Let $A$ be a \ca.
The annihilator of $a \in A_+$ is
\[
\Ann(a) := \big\{ x\in A : xa=ax=0 \big\}.
\]

We write $\Ann_A(a)$ whenever we want to stress that the annihilator is considered inside the algebra $A$. 
Note that $\Ann(a)$ is a hereditary sub-\ca{} of~$A$.

If $a,b \in A_+$, then:
\begin{itemize}
\item
we write $a \ll b$ if $b$ acts as a unit on $a$, that is, if $a=ab$;
\item
we write $a \lhd b$ to mean that $a$ belongs to the closed ideal generated by $b$, that is, if $a\in\overline{\linSpan} AbA$.
\item
we write $a \perp b$ if $ab = 0$.
\end{itemize} 
\end{ntn}

\begin{lma}
\label{prp:AnnSurjective}
Let $\pi \colon A \to B$ be a surjective \stHom{} between \ca{s}, and let $a,b \in A_+$ satisfy $a \ll b$.
Then $\Ann_B(\pi(b)) \subseteq \pi(\Ann_A(a))$.
\end{lma}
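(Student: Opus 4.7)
The strategy is to start with an arbitrary $y \in \Ann_B(\pi(b))$, lift it to an element $x_0 \in A$ using surjectivity of $\pi$, and then correct $x_0$ by subtracting off multiples of $b$ so that the corrected element both lies over $y$ and annihilates $a$. The guiding idea is to mimic the formula $(1-b)x_0(1-b)$ from the unitization, which is the obvious candidate since $a \ll b$ means that $b$ acts as a unit on $a$ on both sides.

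The plan in detail: first I would note that $a \ll b$ gives $a = ab$, and taking adjoints (using that $a, b$ are positive self-adjoint) yields $a = ba$ as well. Next, given $y \in \Ann_B(\pi(b))$, pick any $x_0 \in A$ with $\pi(x_0) = y$, and set
\[
x := x_0 - x_0 b - b x_0 + b x_0 b \in A.
\]
Applying $\pi$ and using $y\pi(b) = \pi(b)y = 0$ yields $\pi(x) = y - 0 - 0 + 0 = y$. Now compute
\[
xa = x_0 a - x_0 (ba) - b x_0 a + b x_0 (ba) = x_0 a - x_0 a - b x_0 a + b x_0 a = 0,
\]
and analogously, using $ab = a$,
\[
ax = a x_0 - a x_0 b - (ab) x_0 + (ab) x_0 b = a x_0 - a x_0 b - a x_0 + a x_0 b = 0.
\]
Hence $x \in \Ann_A(a)$ and $y = \pi(x) \in \pi(\Ann_A(a))$, as desired.

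There is no genuine obstacle here; the only subtlety is choosing the correct correction term, which is forced by the fact that we need cancellation from both sides and can only use relations that survive under $\pi$. Writing everything inside $A$ rather than in the unitization is purely cosmetic, but it avoids any ambiguity about whether the construction depends on an adjoined unit.
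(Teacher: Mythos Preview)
Your proof is correct and is essentially the same as the paper's: the paper adjoins a unit and sets $z := (1-b)y(1-b)$, while you expand this product explicitly as $x_0 - x_0 b - bx_0 + bx_0b$ to stay inside $A$. As you note yourself, the difference is purely cosmetic.
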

\begin{proof}
By adjoining a unit to $A$ if necessary, we may assume that $A$ is unital.
Let $x \in \Ann_B(\pi(b))$.
Lift $x$ to obtain $y \in A$ with $\pi(y) = x$.
Set $z := (1-b)y(1-b) \in A$.
Then $\pi(z) = x$.
Further, using that $a \ll b$, we have $a(1-b) = (1-b)a = 0$, and therefore $z \in \Ann_A(a)$.
\end{proof}

\begin{lma}
\label{prp:AnnProjection}
Let $A$ be a \ca, let $B \subseteq A$ be a soft sub-\ca, and let $p \in B$ be a projection.
Then $\Ann(p)$ is full in $A$.
\end{lma}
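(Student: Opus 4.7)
The plan is to prove the lemma by contradiction. I will let $I\subseteq A$ denote the closed two-sided ideal generated by $\Ann(p)$ and assume $I\neq A$, aiming to pass to the quotient $\pi\colon A\to A/I$ and show that $\bar{p}:=\pi(p)$ is a unit of $\bar{B}:=\pi(B)$, thereby contradicting softness of $B$ via \cref{prp:PermIdealQuotExt}.

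First, since $p$ is a projection, $p\ll p$, so \cref{prp:AnnSurjective} applied to the surjection $\pi$ with $a=b=p$ yields $\Ann_{A/I}(\bar p)\subseteq \pi(\Ann_A(p))\subseteq \pi(I)=0$. This already implies $\bar p\neq 0$, for otherwise $\Ann_{A/I}(\bar p)=A/I=0$ would force $I=A$. Note also that $\bar B$ is a nonzero quotient of the soft algebra $B$, hence itself soft. The core computation is the following: for each $\bar b\in \bar B$, the element $\bar b-\bar p\bar b-\bar b\bar p+\bar p\bar b\bar p$ lies in $\bar B$ (it is a linear combination of elements of $\bar B$) and annihilates $\bar p$ on both sides, so it belongs to $\Ann_{\bar B}(\bar p)\subseteq \Ann_{A/I}(\bar p)=0$. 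This gives the identity $\bar b=\bar p\bar b+\bar b\bar p-\bar p\bar b\bar p$ for every $\bar b\in \bar B$.

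To conclude that $\bar p$ is a unit of $\bar B$, I will evaluate this identity on an approximate unit $(e_\lambda)$ of $\bar B$. Since $\bar p\in \bar B$, one has $\bar p e_\lambda,\,e_\lambda\bar p,\,\bar p e_\lambda\bar p\to \bar p$ in norm, so the identity forces $e_\lambda\to \bar p$ in norm. Hence $\bar B$ is unital with unit $\bar p$, contradicting softness of $\bar B$. The main obstacle is precisely the observation that the vanishing of $\Ann_{A/I}(\bar p)$ forces an approximate unit of $\bar B$ to converge in norm to $\bar p$; once that identity is in hand, everything else follows directly from \cref{prp:AnnSurjective} and the definition of softness.
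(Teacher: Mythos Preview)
Your argument is correct and follows essentially the same route as the paper: quotient by the ideal $I$ generated by $\Ann_A(p)$, apply \cref{prp:AnnSurjective} to see that $\pi(p)$ has trivial annihilator in $A/I$, and produce a nonzero unital quotient of $B$. The only differences are that the paper observes directly that a nonzero projection with zero annihilator is automatically the unit of the ambient algebra $A/I$ (hence of $\pi(B)$), bypassing your approximate-unit computation inside $\bar B$, and that your citation of \cref{prp:PermIdealQuotExt} is a forward reference---you can avoid it by noting that $\bar B$ being nonzero and unital contradicts the definition of softness of $B$ directly.
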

\begin{proof}
Let $I$ denote the closed ideal of $A$ generated by the annihilator $\Ann_A(p)$, and let $\pi \colon A \to A/I$ denote the quotient map.
Since $p \ll p$, we can apply \cref{prp:AnnSurjective} at the first step to obtain
\[
\Ann_{A/I}(\pi(p)) 
\subseteq \pi(\Ann_A(p))
\subseteq \pi(I) 
= \{0\}.
\]

Hence, the projection $\pi(p)$ has zero annihilator in $A/I$.
Assuming $\pi(p) \neq 0$, it follows that $\pi(p)$ is the unit of $A/I$ and, thus, of $\pi(B)$.
This contradicts the softness of $B$ and shows that $\pi(p) = 0$, which implies $A/I = \{0\}$ and so $I = A$.
\end{proof}

\begin{ntn}
Following the notation from \cite{BicKos19LLApproxUnits}, we set
\[
A_c := \big\{ a\in A_+ : a \ll b \text{ for some } b \in A_+ \big\}.
\]

The (nonclosed) ideal of $A$ generated by $A_c$ is called the Pedersen ideal -- it is the minimal dense ideal of $A$;
see \cite[Theorem~II.5.2.4]{Bla06OpAlgs}.

Note that $A_c$ is denoted by $F(A)$ in \cite{OrtPerRor12CoronaStability} and \cite{HirRorWin07CXAlgStabSSA}. We do not use this notation to avoid confusion with the central sequence algebra of $A$. In \cite{Bla06OpAlgs}, $A_c$ is denoted by $A_+^c$
\end{ntn}

\begin{lma}
\label{prp:AnnSoftSubalg}
Let $A$ be a \ca, let $B \subseteq A$ be a soft sub-\ca, and let $a \in B_c$.
Then $\Ann(a)$ is full in $A$.
\end{lma}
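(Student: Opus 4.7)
The plan is to mimic the argument of \cref{prp:AnnProjection}, using $b \in B_+$ (with $a \ll b$, afforded by the hypothesis $a \in B_c$) in the role of the projection. Let $I$ denote the closed ideal of $A$ generated by $\Ann_A(a)$ and $\pi\colon A \to A/I$ the quotient map. Applying \cref{prp:AnnSurjective} to $\pi$ and the relation $a \ll b$ gives
\[
\Ann_{A/I}(\pi(b)) \subseteq \pi(\Ann_A(a)) \subseteq \pi(I) = 0,
\]
so $\pi(b)$ has trivial annihilator in $A/I$.

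The extra observation beyond the projection case is that $a \ll b$ also forces $(b-1)a = ba - a = 0$ and $a(b-1) = ab - a = 0$ in any unitization of $A$; that is, $b - 1$ annihilates $a$. When $A$ is unital, $b - 1 \in A$, so $b - 1 \in \Ann_A(a) \subseteq I$, hence $\pi(b) = 1_{A/I}$. The rest of the argument then mirrors the proof of \cref{prp:AnnProjection}: since $\pi(b) \in \pi(B)$ is the unit of $A/I$, the soft quotient $\pi(B) = B/(B\cap I)$ of $B$ becomes unital, contradicting softness — unless $\pi(B) = 0$, in which case $\pi(b) = 0 = 1_{A/I}$, so $A/I = 0$ and $I = A$.

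For non-unital $A$, the element $b - 1$ lies only in the unitization $\tilde A$. The plan is to apply the unital argument to the inclusion $B \subseteq \tilde A$ (valid because softness of $B$ is intrinsic) to conclude that the closed ideal of $\tilde A$ generated by $\Ann_{\tilde A}(a) = \Ann_A(a) + \mathbb{C}(b-1)$ equals all of $\tilde A$, and then to transfer this back to $A$. The main obstacle is this final descent: one must rule out that the fullness in $\tilde A$ is due only to the added generator $b - 1$, so that $\Ann_A(a)$ on its own generates $A$ as a closed ideal. I expect this step to require a direct argument in $A/I$ that combines the trivial annihilator of $\pi(b)$ — equivalent to $\pi(b)$ being strictly positive in $A/I$ — with the softness of the quotient $\pi(B)$.
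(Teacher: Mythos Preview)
Your unital argument is correct and slick: once $b-1 \in \Ann_A(a) \subseteq I$, the image $\pi(b)$ is the unit of $A/I$, and softness of $\pi(B)$ forces $A/I = 0$. The non-unital case, however, has a genuine gap, and neither of your suggested routes closes it. The parenthetical claim that trivial annihilator of $\pi(b)$ is \emph{equivalent} to $\pi(b)$ being strictly positive is false: in $C_0(\RR)$ the function $t \mapsto |t|e^{-t^2}$ has trivial annihilator but is not strictly positive. Even if $\pi(b)$ were strictly positive, there is no tension with softness of $\pi(B)$ --- a soft \ca{} such as $C_0(0,1]$ happily contains strictly positive elements. The unitization-then-descent route is likewise inconclusive: the extra generator $1-b$ of $\Ann_{\tilde A}(a)$ may account for all of the fullness in $\tilde A$, and you give no mechanism to separate its contribution from that of $\Ann_A(a)$.

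The missing observation is that you do not need $b-1$: the element $b-b^2$ already lies in $A$ and satisfies $a(b-b^2)=ab-ab^2=a-a=0$, so $b-b^2 \in \Ann_A(a) \subseteq I$. Hence $\pi(b)^2=\pi(b)$, and $\pi(b)$ is a \emph{projection} in $A/I$ regardless of whether $A$ is unital. Combined with your first step ($\Ann_{A/I}(\pi(b))=0$ via \cref{prp:AnnSurjective}), this puts you exactly in the situation of \cref{prp:AnnProjection}, and the argument finishes as in your unital case. The paper reaches the same endpoint by a slightly longer path: it introduces auxiliary functions $f,g$ with $a \ll f(b) \ll g(b)$, passes to the quotient by the ideal $J \subseteq I$ generated by $g(b)-f(b)$ to turn $f(b)$ into a projection there, and then invokes \cref{prp:AnnProjection} and \cref{prp:AnnSurjective} in $A/J$ before concluding $I=A$. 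Your approach, once repaired with $b-b^2$, is a genuine simplification.
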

\begin{proof}
Let $I$ denote the closed ideal of $A$ generated by $\Ann_A(a)$.
We will show that $I=A$.
Choose $b \in B_+$ such that $a \ll b$.
Let $f,g \colon \RR \to [0,1]$ be given by
\[
f(t) = \begin{cases}
0, & t\in(-\infty,\tfrac{1}{2}] \\
2t-1, & t\in[\tfrac{1}{2},1] \\
1, & t\in[1,\infty),
\end{cases}
\andSep
g(t) = \begin{cases}
0, & t\in(-\infty,0] \\
2t, & t\in[0,\tfrac{1}{2}] \\
1, & t\in[\tfrac{1}{2},\infty).
\end{cases}
\]

Then $a \ll f(b) \ll g(b)$.

Let $J$ denote the closed ideal generated by $g(b)-f(b)$, and let $\pi_J\colon A\to A/J$ denote the quotient map.
Note that $g(b)-f(b) \in \Ann_A(a)$.
Thus, $J\subseteq I$.

Then $\pi_J(f(b))\ll\pi_J(g(b))=\pi_J(f(b))$, which shows that $\pi_J(f(b))$ is a projection.
Using that $B$ is soft, it easily follows that $\pi_J(B)$ is soft as well.
Then, by \cref{prp:AnnProjection}, the annihilator of $\pi_J(f(b))$ in $A/J$ is full.
Since $a \ll f(b)$, we can apply \cref{prp:AnnSurjective} to obtain
\[
\Ann_{A/J}(\pi_J(f(b)))
\subseteq \pi_J(\Ann_A(a)).
\]

It follows that $\pi_J(I)=A/J$.
Combined with $J \subseteq I$, we get $I=A$.
\end{proof}

The next result is similar to the Hjelmborg-R{\o}rdam characterization of stability from \cite{HjeRor98Stability}.

\begin{prp}
\label{prp:CharSoftAlg}
Let $A$ be a \ca.
Then the following are equivalent:
\begin{enumerate}
\item
$A$ is soft;
\item
for every $a\in A_c$, the algebra $\Ann(a)$ is full in $A$;
\item
for every $a\in A_c$, there exist $b \in A_+$ with $a \perp b$ and $a \lhd b$;
\item
for every $a\in A_+$ and $\varepsilon>0$ there exist $b,c\in A_+$ with $\|a-b\|<\varepsilon$, $b\perp c$, and $b \lhd c$.
\end{enumerate}
\end{prp}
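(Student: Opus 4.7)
The plan is to establish the cycle $(1) \Rightarrow (2) \Rightarrow (3) \Rightarrow (4) \Rightarrow (1)$. The first step $(1) \Rightarrow (2)$ is immediate from \cref{prp:AnnSoftSubalg} applied with $B = A$, which directly yields that $\Ann(a)$ is full in $A$ for every $a \in A_c$.

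For $(2) \Rightarrow (3)$, my strategy is to aggregate countably many witnesses of fullness into a single element. Fix $a \in A_c$. Since $\Ann(a)$ is a hereditary sub-\ca{}, it equals the closed linear span of its positive part, so fullness yields, for each $n \geq 1$, elements $b_{n,i} \in \Ann(a)_+$ and $x_{n,i}, y_{n,i} \in A$ with $\|a - \sum_i x_{n,i} b_{n,i} y_{n,i}\| < 1/n$. After rescaling so that $\|c_n\| \le 1$, the element $c_n := \sum_i b_{n,i}$ dominates every $b_{n,i}$, placing each $b_{n,i}$, and hence every term $x_{n,i} b_{n,i} y_{n,i}$, in the closed ideal $\langle c_n\rangle$; thus $a$ lies within $1/n$ of $\langle c_n\rangle$. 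Then $b := \sum_{n \geq 1} 2^{-n} c_n$ is a norm-convergent positive element of $\Ann(a)$ that dominates a positive multiple of each $c_n$, so $\langle c_n\rangle \subseteq \langle b\rangle$ for all $n$ and consequently $a \in \langle b\rangle$; perpendicularity $a \perp b$ is automatic from $b \in \Ann(a)$.

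The step $(3) \Rightarrow (4)$ is a short functional-calculus argument: given $a \in A_+$ and $\varepsilon > 0$, the element $b := (a - \varepsilon/2)_+$ satisfies $\|a - b\| \le \varepsilon/2 < \varepsilon$ and lies in $A_c$, because $b \ll g(a)$ for any continuous $g \colon [0,\infty) \to [0,1]$ that equals one on $[\varepsilon/2,\infty)$ and vanishes at $0$. Applying $(3)$ to $b$ returns the required $c$, since the witnesses $b\perp c$ and $b \lhd c$ are precisely the conditions required in $(4)$.

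For $(4) \Rightarrow (1)$, I argue by contrapositive: assume $A$ admits a proper closed ideal $I$ with $A/I$ unital, write $\pi$ for the quotient, and lift the unit of $A/I$ to a positive element $a \in A_+$ of norm one. Applying $(4)$ with $\varepsilon = 1/2$ produces $b, c \in A_+$ for which $\|\pi(b) - 1_{A/I}\| < 1/2$, so $\pi(b)$ is invertible in $A/I$. Then $b \perp c$ forces $\pi(c) = 0$, hence $c \in I$, and $b \lhd c$ gives $\pi(b) \in \pi(I) = \{0\}$, a contradiction. I expect the main obstacle to be $(2) \Rightarrow (3)$, since fullness of $\Ann(a)$ only provides $a$ up to arbitrarily many positive witnesses, and compressing this data into a single element whose principal ideal contains $a$ requires the countable aggregation described above.
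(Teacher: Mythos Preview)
Your proof is correct and follows essentially the same route as the paper's: the same cycle $(1)\Rightarrow(2)\Rightarrow(3)\Rightarrow(4)\Rightarrow(1)$, with $(1)\Rightarrow(2)$ via \cref{prp:AnnSoftSubalg}, the countable aggregation trick for $(2)\Rightarrow(3)$, the cut-down $(a-\varepsilon/2)_+$ for $(3)\Rightarrow(4)$, and the invertibility-in-the-quotient contradiction for $(4)\Rightarrow(1)$. The only cosmetic differences are that the paper phrases the aggregation in $(2)\Rightarrow(3)$ slightly more directly (choosing a single sequence $(b_n)_n$ in $\Ann(a)_+$ whose generated ideal contains $a$, then setting $b=\sum_n \tfrac{1}{2^n\|b_n\|+1}b_n$) and uses $\varepsilon=1$ rather than $\varepsilon=1/2$ in the last step.
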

\begin{proof}
It follows from \cref{prp:AnnSoftSubalg} that~(1) implies~(2).
To show that~(2) implies~(3), assume that $\Ann(a)$ is full.
This implies that $a$ belongs to the closed ideal generated by $\Ann(a)$, which allows us to choose a sequence $(b_n)_n$ in $\Ann(a)_+$ such that~$a$ belongs to the closed ideal generated by $\{b_n : n \in \NN\}$.
Set $b:=\sum_n \tfrac{1}{2^n\|b_n\|+1}b_n$.
Then $b$ belongs to $\Ann(a)$.
Further, for each $n$, we have $b_n\leq (2^n\|b_n\| +1)b$, which implies that $b_n$ belongs to the closed ideal generated by $b$. Consequently, $a$ belongs to the closed ideal generated by $b$.

To show that~(3) implies~(4), assume that~(3) holds, and let $a\in A_+$ and $\varepsilon>0$.
Set $b:=(a-\tfrac{\varepsilon}{2})_+$.
Then $b\in A_c$ and $\|a-b\|=\tfrac{\varepsilon}{2}<\varepsilon$.
Applying~(3), we obtain $c\in A_+$ such that $b\perp c$ and $b \lhd c$.

Finally, to show that~(4) implies~(1), assume that~(4) holds and let $I\subseteq A$ be a proper closed ideal.
To reach a contradiction, assume that $A/I$ is unital.
Choose $a\in A_+$ such that its image in $A/I$ is the unit.
Applying~(4) with $\varepsilon=1$, we obtain $b,c \in A_+$ such that $\|a-b\|<1$ and $b \lhd c$.
It follows that the image of $b$ in the quotient $A/I$ is at distance less than $1$ to the unit, and is therefore strictly positive.
Thus, the image of $c$ in $A/I$ is zero, that is, $c\in I$.
We get $b\in I$, which implies that $A/I=\{0\}$.
\end{proof}


\begin{pgr}
\label{pgr:CuntzSgp}
Given two positive elements $a,b$ in a \ca{} $A$, we say that $a$ is \emph{Cuntz subequivalent} to $b$, written $a\precsim b$, if there exists a sequence $(r_n)_n$ in $A$ such that $a=\lim_n r_n br_n^*$. 
Further, one says that $a$ is \emph{Cuntz equivalent} to $b$, and writes $a\sim b$, if $a\precsim b\precsim a$.
 
The \emph{Cuntz semigroup} $\Cu (A)$ of a \ca{} $A$ is the quotient $(A\otimes\KK)_+/\!\sim$ with the order induced by the Cuntz subequivalence and the addition induced by addition of orthogonal elements;
we refer to \cite{AraPerTom11Cu, AntPerThi18TensorProdCu} for details.

We let $\QT(A)$ denote the cone of $[0,\infty]$-valued, lower-semicontinuous $2$-quasi\-traces on $(A\otimes\KK)_+$.
Each element $[a] \in \Cu (A)$ induces a linear, lower semicontinuous map $\wh{[a]} \colon \QT(A) \to [0,\infty]$ by $\wh{[a]}(\tau) := \lim_{n \to \infty} \tau(a^{1/n})$ for $\tau \in \QT(A)$;
see \cite{EllRobSan11Cone, Rob13Cone, AntPerRobThi21Edwards}.
\end{pgr}

In \cite{HjeRor98Stability}, Hjelmborg and R{\o}rdam introduced a property for \ca{s} that characterizes stability in the $\sigma$-unital case.
We will say that a \ca{} has the \emph{Hjelmborg-R{\o}rdam property} if, for every $a\in A_c$, there exists $b\in A_+$ such that $a\perp b$, and such that $a=x^*x$ and $b=xx^*$ for some $x\in A$.

\begin{prp}
\label{prp:CharHR}
A \ca{} $A$ has the Hjelmborg-R{\o}rdam property if and only if for every $a\in A_c$ there exists $b\in A_+$ such that $a\perp b$ and $[a]\leq[b]$ in $\Cu(A)$.
\end{prp}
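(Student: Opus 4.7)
The forward implication is essentially formal: if $a \in A_c$ admits $b \in A_+$ and $x \in A$ with $a \perp b$, $a = x^*x$, and $b = xx^*$, then the Cuntz equivalence $x^*x \sim xx^*$ yields $[a] = [b]$ in $\Cu(A)$, so in particular $[a] \leq [b]$; the orthogonality $a \perp b$ is part of the hypothesis.

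For the converse, fix $a \in A_c$ and $b \in A_+$ with $a \perp b$ and $[a] \leq [b]$. The plan is to produce $x \in A$ with $x^*x = a$ and $xx^* \in \overline{bAb}$; then $b' := xx^*$ is automatically orthogonal to $a$ (since elements of $\overline{bAb}$ are annihilated by $a$), and the triple $(a, b', x)$ witnesses the Hjelmborg-R{\o}rdam property for $a$. Since $a \in A_c$, pick $d \in A_+$ with $ad = a$; taking adjoints gives $da = a$, so $a$ and $d$ commute, and after replacing $d$ by a suitable bounded function of itself we may assume $\|a\|, \|d\| \leq 1$ with $d = 1$ on $\spec(a) \setminus \{0\}$. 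Applying R{\o}rdam's lemma to $(a - \varepsilon)_+ \precsim b$ yields, for each $\varepsilon > 0$, an element $r_\varepsilon \in A$ with $r_\varepsilon^* r_\varepsilon = (a - \varepsilon)_+$ and $r_\varepsilon r_\varepsilon^* \in \overline{bAb}$.

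The main obstacle is upgrading $r_\varepsilon$ from a witness for $(a - \varepsilon)_+$ to one for $a$ itself. A natural attempt is to set $x := r_\varepsilon \cdot h(a)$ for a continuous function $h$ chosen so that $x^*x = h(a)^2 (a - \varepsilon)_+ = a$ on $\spec(a)$; this forces $h(t) = \sqrt{t/(t - \varepsilon)_+}$, which is unbounded near $t = \varepsilon$. Overcoming this is precisely where the compact-containment $a \ll d$ must be used essentially: combining R{\o}rdam's lemma with a careful functional calculus involving $d$, or decomposing $a$ into a telescoping sum of spectrally-gapped pieces each treated by a separate application of R{\o}rdam's lemma, allows one to regularize $h$ and produce $x \in A$ with $x^*x = a$ on the nose. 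Without the hypothesis $a \in A_c$ the strict equality $x^*x = a$ cannot in general be arranged, which is what makes the proposition a genuine strengthening of R{\o}rdam's lemma.
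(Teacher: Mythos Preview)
Your forward implication matches the paper's. The converse has a genuine gap: you correctly identify that R{\o}rdam's lemma only delivers $r_\varepsilon$ with $r_\varepsilon^* r_\varepsilon = (a-\varepsilon)_+$, and you correctly flag the unbounded correction $h(t) = \sqrt{t/(t-\varepsilon)_+}$ as the obstacle to upgrading this to $x^*x = a$. But you then simply assert that ``a careful functional calculus involving $d$'' or ``a telescoping sum of spectrally-gapped pieces'' overcomes it, without giving any argument. Neither sketch is obviously viable: the telescoping approach would require the separately constructed pieces $x_k x_k^*$ to be mutually orthogonal inside $\overline{bAb}$, which you have no mechanism to arrange, and the functional-calculus approach still faces the same singularity at the spectral edge regardless of how $d$ is used.

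The paper does not attempt this upgrade at all. Instead, it observes that R{\o}rdam's lemma already produces, for every $a \in A_c$ and every $\varepsilon > 0$, an element $x$ with $xx^* = (a-\varepsilon)_+$ and $x^*x \in \overline{bAb}$, hence $xx^* \perp x^*x$. This approximate condition (valid for all $a \in A_c$ and all $\varepsilon>0$) is precisely one of the equivalent formulations of the Hjelmborg--R{\o}rdam property established in \cite[Proposition~2.2]{HjeRor98Stability}, so the conclusion follows by citation. In other words, the passage from the $\varepsilon$-approximate version to the exact one is already packaged in Hjelmborg and R{\o}rdam's original paper; you were trying to reprove that step element-by-element, and did not complete it.
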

\begin{proof}
For every $x\in A$, we have $[x^*x]=[xx^*]$ in $\Cu (A)$. 
This shows the forward implication.

Conversely, let $a\in A_c$ and take $\varepsilon >0$. 
By assumption, there exists $b\in A_+$ such that $a\perp b$ and $[a]\leq [b]$ in $\Cu(A)$. 
Thus, it follows from \cite[Proposition~2.4]{Ror92StructureUHF2} that there exists $x\in A$ satisfying 
\[
(a-\varepsilon )_+=xx^*, \andSep x^*x\in \overline{bAb}.
\]

In particular, note that $xx^*$ and $x^*x$ are orthogonal.
Applying \cite[Proposition~2.2]{HjeRor98Stability}, we see that $A$ has the Hjelmborg-R{\o}rdam property.
\end{proof}

For elements $[a],[b]$ in a Cuntz semigroup, one writes $[a]<_s [b]$ if $(n+1)[a]\leq n[b]$ for some $n\in\NN$.
Following \cite[Definition~4.1]{OrtPerRor12CoronaStability}, we say that a \ca{} $A$ has \emph{property~(S)} if for every $a\in A_c$ there exists $b\in A_+$ such that $a\perp b$ and $[a]<_s[b]$ in $\Cu(A)$.

\begin{prp}
\label{prp:CharS}
A \ca{} $A$ has property~(S) if and only if for every $a \in A_c$ there exists $b \in A_+$ such that $a \perp b$ and $\wh{[a]} \leq \wh{[b]}$.
\end{prp}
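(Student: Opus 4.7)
The forward direction is essentially a rank computation. Starting from $(n+1)[a]\leq n[b]$ in $\Cu(A)$, I would apply each $\tau\in\QT(A)$ to obtain the numerical inequality $(n+1)\wh{[a]}(\tau)\leq n\wh{[b]}(\tau)$. Splitting into cases: if $\wh{[a]}(\tau)<\infty$, dividing yields $\wh{[a]}(\tau)\leq\tfrac{n}{n+1}\wh{[b]}(\tau)\leq\wh{[b]}(\tau)$; if $\wh{[a]}(\tau)=\infty$, the inequality forces $\wh{[b]}(\tau)=\infty=\wh{[a]}(\tau)$. Either way, $\wh{[a]}\leq\wh{[b]}$.

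For the backward direction, my plan is to iteratively apply the hypothesis to manufacture a single orthogonal element whose rank is a large multiple of $\wh{[a]}$, then convert this rank gap to the $<_s$-inequality required by property~(S). Concretely, starting from $a\in A_c$, apply the hypothesis to obtain $b_1\perp a$ with $\wh{[a]}\leq\wh{[b_1]}$, replace $b_1$ by a cutdown $b_1'=(b_1-\varepsilon_1)_+$, which lies in $A_c$, and then apply the hypothesis to $a+b_1'$ (after verifying it lies in $A_c$, using that $a,b_1'\in A_c$ are orthogonal) to produce $b_2\perp(a+b_1')$ with $\wh{[a]}+\wh{[b_1']}\leq\wh{[b_2]}$. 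Iterating, I would produce pairwise orthogonal $b_1',\ldots,b_n'\in A_c$, each orthogonal to $a$, with $\wh{[b_1'+\cdots+b_n']}\geq n\wh{[a]}$, so that the rank gap between $a$ and $B_n:=b_1'+\cdots+b_n'$ can be made arbitrarily large.

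The principal obstacle, and the step requiring the deepest input, is converting the resulting strict rank inequality into the genuine Cuntz-subequivalence $(n+1)[a]\leq n[B_n]$, as pointwise rank domination does not imply Cuntz subequivalence in general. The compactness-type property of $a\in A_c$—manifested as $[a]\ll[c]$ in $\Cu(A)$ for some $c$—should be exactly what is needed to invoke a structural rank-to-Cuntz comparison result for Cu-semigroups, in the spirit of those developed in \cite{AntPerRobThi21Edwards}. Once $(n+1)[a]\leq n[B_n]$ is established in $\Cu(A)$, Rørdam's lemma \cite[Proposition~2.4]{Ror92StructureUHF2} would realize the witness concretely in $A$, mirroring the argument used for \cref{prp:CharHR}. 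The technical hurdle is therefore this rank-to-$<_s$ promotion, not the iterative construction itself.
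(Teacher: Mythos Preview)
Your forward direction is correct and matches the paper's.

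Your backward direction has a genuine gap in the iterative construction. After obtaining $b_1\perp a$ with $\wh{[a]}\leq\wh{[b_1]}$, you pass to the cutdown $b_1'=(b_1-\varepsilon_1)_+$ in order to land in $A_c$, but this destroys the rank inequality: you have no lower bound on $\wh{[b_1']}$ in terms of $\wh{[a]}$. Tracing through, the inductive step gives $\wh{[b_n]}\geq\wh{[a]}+\sum_{i<n}\wh{[b_i']}$, which says nothing about $\sum_i\wh{[b_i']}$ versus $n\wh{[a]}$; your claimed conclusion $\wh{[B_n]}\geq n\wh{[a]}$ simply does not follow. The missing ingredient is precisely Robert's lemma \cite[Lemma~2.2.5]{Rob13Cone}: if $[x']\ll[x]$ in $\Cu(A)$ then $\wh{[x']}$ is way-below $2\wh{[x]}$ in the function cone, which is what lets one pass to a cutdown on the \emph{target} side while retaining a controlled rank inequality (at the cost of a factor $2$). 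To exploit this you must apply the hypothesis not to $a$ but to some $b\in A_c$ with $a\ll b$, so that $[a]\ll[(b-\varepsilon)_+]$ is available---this is exactly how the paper sets things up.

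The paper then proceeds far more economically than your iteration: two applications of the hypothesis (to $b$, then to $b+(c-\delta)_+$) combined with Robert's lemma yield $2\wh{[(b-\varepsilon)_+]}\leq\wh{[(c-\delta)_++d]}$ directly. The rank-to-$<_s$ conversion you correctly flag as the crux is handled by \cite[Theorem~5.2.13]{AntPerThi18TensorProdCu} (your pointer to \cite{AntPerRobThi21Edwards} is in the right neighborhood but not the result actually used). Finally, your closing invocation of \cite[Proposition~2.4]{Ror92StructureUHF2} is unnecessary: property~(S) is stated at the Cuntz-class level, so once you have $a\perp e$ in $A_+$ with $[a]<_s[e]$ you are done---no concrete realization \`a la \cref{prp:CharHR} is needed.
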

\begin{proof}
Let $[a],[b]$ in $\Cu (A)$. 
In general, one has that $[a]<_s [b]$ implies $\wh{[a]}\leq\wh{[b]}$.  
This shows the forward implication.

For the converse, let $a \in A_c$.
Using an argument as in the proof of \cref{prp:AnnSoftSubalg}, we can find $b \in A_c$ such that $a \ll b$.
Applying the assumption for $b$, we obtain $c \in A_+$ such that $b \perp c$ and $\wh{[b]} \leq \wh{[c]}$.

Note that $[a]$ is way-below $[b]$ in $\Cu(A)$, denoted $[a] \ll [b]$; see \cref{pgr:CuSgps} for details.
This allows us to choose $\varepsilon>0$ such that $[a] \ll [(b-\varepsilon)_+]$.
Using that $[(b-\varepsilon)_+] \ll [b]$, we can apply \cite[Lemma~2.2.5]{Rob13Cone} to deduce that $\wh{[(b-\varepsilon)_+]}$ is way-below $2\wh{[b]}$ (and hence also $2\wh{[c]}$) in the semigroup of linear, lower-semicontinuous functions $\QT(A) \to [0,\infty]$.
This allows us to choose $\delta>0$ such that $\wh{[(b-\varepsilon)_+]} \leq 2\wh{[(c-\delta)_+]}$.

Note that $b+(c-\delta)_+$ belongs to $A_c$.
Applying the assumption again, we obtain $d \in A_+$ such that
\[
b+(c-\delta)_+ \perp d, \andSep
\wh{[b]} + \wh{[(c-\delta)_+]} \leq \wh{[d]},
\]
where at the second inequality we have used that $b\perp (c-\delta)_+$.

Then
\[
2\wh{[(b-\varepsilon)_+]} 
\leq \wh{[b]} + \wh{[(b-\varepsilon)_+]} 
\leq \wh{[b]} + 2 \wh{[(c-\delta)_+]}
\leq \wh{[(c-\delta)_+]} + \wh{[d]}.
\]

Thus, $\wh{[(b-\varepsilon)_+]} <_s  \wh{[(c-\delta)_+]} + \wh{[d]}$.
Using at the first step that $[a] \ll [(b-\varepsilon)_+]$ and applying \cite[Theorem~5.2.13]{AntPerThi18TensorProdCu}, we get
\[
[a] 
<_s [(c-\delta)_+] + [d]
= [(c-\delta)_+ + d].
\]

Since $a \perp (c-\delta)_+ + d$, we see that $(c-\delta)_+ + d$ has the desired properties.
\end{proof}

\begin{rmk}
\label{rmk:RelSoft}
For positive elements $a$ and $b$ in a \ca{}, we have $a \lhd b$ if and only if $[a] \leq \infty [b]$ in $\Cu(A)$;
see, for example, \cite[Section~5.1]{AntPerThi18TensorProdCu}.

Therefore, Propositions~\ref{prp:CharHR}, \ref{prp:CharS} and~\ref{prp:CharSoftAlg} show that the Hjelmborg-R{\o}rdam property, property~(S), and softness for a \ca{} $A$, can be characterized in very similar ways:
For every $a\in A_c$ there exists $b\in A_+$ such that $a\perp b$ and
\begin{itemize}
\item
$[a]\leq[b]$ for the Hjelmborg-R{\o}rdam property;
\item
$\wh{[a]}\leq\wh{[b]}$ for property~(S);
\item
$[a]\leq\infty[b]$ for softness.
\end{itemize}

For elements $[a],[b]$ in a Cuntz semigroup, we have the following implications:
\[
[a] \leq [b] \quad\Rightarrow\quad
\wh{[a]}\leq\wh{[b]} \quad\Rightarrow\quad
[a]\leq\infty[b].
\]

This proves the next result.
\end{rmk}

\begin{prp}
\label{prp:RelSoft}
For every \ca{}, the following implications hold:
\[
\text{Hjelmborg-R{\o}rdam property} \quad\Rightarrow\quad
\text{property~(S)} \quad\Rightarrow\quad
\text{softness}.
\]
\end{prp}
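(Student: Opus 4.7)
My plan is to deduce both implications directly from the characterizations given by Propositions~\ref{prp:CharHR}, \ref{prp:CharS}, and~\ref{prp:CharSoftAlg}, each of which rephrases one of the three properties in the common form ``for every $a\in A_c$ there exists $b\in A_+$ with $a\perp b$ and a suitable dominance condition on $[a]$ by $[b]$''. This reduces the proposition to the elementary chain observed in Remark~\ref{rmk:RelSoft}: the relation $[a]\leq[b]$ is stronger than $\wh{[a]}\leq\wh{[b]}$, which in turn is stronger (in the weak form $[a]<_s[b]\Rightarrow[a]\leq\infty[b]$ that is all we need) than $[a]\leq\infty[b]$.

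For the implication from the Hjelmborg-R{\o}rdam property to property~(S), I would fix $a\in A_c$ and apply Proposition~\ref{prp:CharHR} to find $b\in A_+$ with $a\perp b$ and $[a]\leq[b]$ in $\Cu(A)$. Since the map $\wh{[\cdot]}\colon\Cu(A)\to[0,\infty]^{\QT(A)}$ is order-preserving, this yields $\wh{[a]}\leq\wh{[b]}$, and Proposition~\ref{prp:CharS} then delivers property~(S).

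For the implication from property~(S) to softness, I would use the \emph{definition} of property~(S) rather than its dimension-function characterization: given $a\in A_c$, choose $b\in A_+$ with $a\perp b$ and $[a]<_s[b]$, so that $(n+1)[a]\leq n[b]$ for some $n\in\NN$. Then $[a]\leq n[b]\leq\infty[b]$, and the identification $a\lhd b\Leftrightarrow [a]\leq\infty[b]$ recalled in Remark~\ref{rmk:RelSoft} translates this to $a\lhd b$. Condition~(3) of Proposition~\ref{prp:CharSoftAlg} then gives softness of $A$.

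There is no real obstacle here: all substantive work has already been carried out in establishing the three parallel characterizations (and, for the second step, in the argument that $[a]<_s[b]$ forces $a$ into the closed ideal generated by $b$, which is immediate). Once those are in place, the two implications become purely formal comparisons between progressively weaker ways of saying that $[b]$ dominates $[a]$.
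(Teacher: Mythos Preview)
Your proposal is correct and follows essentially the same approach as the paper: the paper's proof is precisely the content of Remark~\ref{rmk:RelSoft}, which observes that the three characterizations in Propositions~\ref{prp:CharHR}, \ref{prp:CharS}, and~\ref{prp:CharSoftAlg} differ only in the dominance condition, and that these conditions form a chain $[a]\leq[b]\Rightarrow\wh{[a]}\leq\wh{[b]}\Rightarrow[a]\leq\infty[b]$. Your second step routes through the \emph{definition} of property~(S) (via $<_s$) rather than its functional characterization, but this is an inessential variation of the same argument.
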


\begin{prp}[{\cite[Proposition~4.5]{OrtPerRor12CoronaStability}}]
A separable \ca{} has property~(S) if and only if it is soft and has no bounded quasitraces.
\end{prp}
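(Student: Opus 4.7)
The plan is to prove both directions via the spectral characterizations established in \cref{prp:CharSoftAlg,prp:CharS}.

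For the forward direction, softness is immediate from \cref{prp:RelSoft}. For the absence of bounded quasitraces, I argue by contradiction: suppose $\tau \in \QT(A)$ is nonzero with $\|\tau\| < \infty$, so that $\wh{[x]}(\tau) \leq \|\tau\|$ for every positive contraction $x \in A$. Starting from any $a_0 \in A_c$ with $\delta := \wh{[a_0]}(\tau) > 0$, I inductively construct pairwise orthogonal elements $a_0, a_1, a_2, \ldots \in A_c$ with $\wh{[a_n]}(\tau) \geq \delta$ for every $n$. Given $a_0, \ldots, a_n$, the sum $c_n := a_0 + \cdots + a_n$ again lies in $A_c$, obtained by orthogonally combining units $s_i \in \overline{a_i A a_i}$ with $a_i \ll s_i$ (which exist because each $a_i \in A_c$); applying property~(S) to $c_n$ furnishes $b_n \in A_+$ with $c_n \perp b_n$ and $(k+1)[c_n] \leq k[b_n]$ for some $k$, so $\wh{[b_n]}(\tau) \geq \wh{[c_n]}(\tau) \geq (n+1)\delta$; setting $a_{n+1} := (b_n - \varepsilon)_+$ for sufficiently small $\varepsilon > 0$ produces $a_{n+1} \in A_c$ orthogonal to all earlier $a_i$ and with $\wh{[a_{n+1}]}(\tau) \geq \delta$ by lower semicontinuity. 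The positive contraction $a := \sum_n 2^{-n} a_n / \|a_n\|$ then dominates each finite subsum $\sum_{i \leq n}[a_i]$ in $\Cu(A)$, so $\wh{[a]}(\tau) \geq \sum_n \wh{[a_n]}(\tau) = \infty$, contradicting $\|\tau\| < \infty$.

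For the reverse direction, by \cref{prp:CharS} it suffices to exhibit, for each $a \in A_c$, an element $b \in A_+$ with $a \perp b$ and $\wh{[a]} \leq \wh{[b]}$ on $\QT(A)$. By \cref{prp:CharSoftAlg}, $\Ann(a)$ is a full hereditary sub-\ca{} of $A$, and since $A$ is separable so is $\Ann(a)$; I take $b$ to be a strictly positive element of $\Ann(a)$, which automatically satisfies $a \perp b$. For any nonzero $\tau \in \QT(A)$, the restriction $\tau|_{\Ann(a)}$ is nonzero (otherwise, using monotonicity of $2$-quasitraces together with $byb \leq \|y\| b^2 \in \Ann(a)$ for $b \in \Ann(a)_+$ and $y \in A_+$, one checks that $\tau$ would vanish on the closed ideal generated by $\Ann(a)$, which is all of $A$). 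By Brown's stable isomorphism theorem, $\Ann(a)$ is Morita equivalent to $A$, and consequently the no-bounded-quasitrace hypothesis transfers to $\Ann(a)$; so $\tau|_{\Ann(a)}$ is unbounded, forcing $\wh{[b]}(\tau) = \sup_n \tau(b^{1/n}) = \infty \geq \wh{[a]}(\tau)$.

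The main obstacle is the final step of the reverse direction: transferring the no-bounded-quasitrace property from $A$ to the full hereditary sub-\ca{} $\Ann(a)$. One needs the Morita-equivalence-induced bijection between $\QT(A)$ and $\QT(\Ann(a))$ to preserve boundedness in the appropriate sense, which for $2$-quasitraces (as opposed to traces) requires some care. By comparison, the forward iteration is fairly routine once one verifies that orthogonal finite sums of elements of $A_c$ remain in $A_c$.
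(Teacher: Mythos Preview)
The paper gives no proof of this proposition --- it simply cites \cite[Proposition~4.5]{OrtPerRor12CoronaStability} --- so there is no argument in the paper to compare yours against. I will therefore just assess your proposal on its own merits.

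\medskip

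\textbf{Forward direction.} Your justification for $c_n = a_0 + \cdots + a_n \in A_c$ is incorrect: you claim there exist $s_i \in \overline{a_i A a_i}$ with $a_i \ll s_i$, but such $s_i$ would have to be a unit for $\overline{a_i A a_i}$, which need not exist (take $A = C_0(0,1]$ and $a_0(t) = (t-\tfrac12)_+$). The conclusion is nevertheless true: $A_c$ coincides with the positive part of the Pedersen ideal and is therefore closed under addition. With that correction the rest of the forward argument goes through.

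\medskip

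\textbf{Reverse direction.} Here there is a genuine gap, and it is precisely the one you flag. The property ``no bounded quasitraces'' is \emph{not} preserved under Morita equivalence or passage to full hereditary subalgebras: for instance, $M_n(\mathbb C)$ sits as a full hereditary subalgebra of $\KK$, and $\KK$ has no bounded traces while $M_n(\mathbb C)$ obviously does. So Brown's theorem alone cannot give you what you need. Concretely, with $b$ strictly positive in $\Ann(a)$ one has $[a]+[b]=[a+b]$ and $a+b$ strictly positive in $A$, so $\wh{[a]}(\tau)+\wh{[b]}(\tau)=\infty$ for every nonzero $\tau$; but this only forces $\wh{[b]}(\tau)=\infty$ \emph{provided} $\wh{[a]}(\tau)<\infty$, and nothing you have written rules out $\wh{[a]}(\tau)=\infty$ with $\wh{[b]}(\tau)<\infty$. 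To close the gap you must use more than fullness of $\Ann(a)$ --- the specific structure of the decomposition $A = \overline{aAa} \,\dot{+}\, \Ann(a)$ coming from $a \in A_c$ is needed, and this is where the actual work in the Ortega--Perera--R{\o}rdam argument lies.
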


%

We now turn to permanence properties of softness.
With view towards the close connection between softness, the Global Glimm Property, and nowhere scatteredness discussed in \cref{sec:Glimm}, we remark that similar permanence properties hold for nowhere scatteredness (\cite[Section~4]{ThiVil21arX:NowhereScattered}) and the Global Glimm Property (\cite[Section~3]{ThiVil22arX:Glimm}).

\begin{prp}
\label{prp:PermIdealQuotExt}
Let $A$ be \ca{}, and let $I\subseteq A$ be a closed ideal.
Then $A$ is soft if and only if $I$ and $A/I$ are soft.
\end{prp}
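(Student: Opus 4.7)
The plan is to prove both implications directly from the definition of softness. For the forward direction, I treat $A/I$ and $I$ separately. For the backward direction, given a proper closed ideal $J \subseteq A$, I split into cases depending on whether $I+J = A$ or $I+J \neq A$.

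For the forward direction, softness of $A/I$ is immediate from the ideal correspondence: every proper closed ideal of $A/I$ has the form $J/I$ with $I \subseteq J \subsetneq A$, and $(A/I)/(J/I) \cong A/J$ is nonunital by softness of $A$. For softness of $I$, suppose $K \subsetneq I$ is a proper closed ideal; since closed two-sided ideals of closed two-sided ideals are closed two-sided ideals in C*-algebras (via an approximate unit argument in $I$), $K$ is a proper closed ideal of $A$. Assuming $I/K$ has a unit $p$, I would show that $p$ is a central projection in $B := A/K$: for any $b \in B$, both $bp$ and $pb$ lie in the ideal $I/K$ on which $p$ acts as unit, so $pbp = bp$ and $pbp = pb$, forcing $bp = pb$. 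Then the annihilator $L := \{b \in B : pb = 0\}$ is a proper closed ideal of $B$ (since $p \notin L$), and $B/L$ is unital with unit $p + L$. Lifting $L$ to a proper closed ideal $L' \subseteq A$ containing $K$, we obtain $A/L' \cong B/L$ unital, contradicting softness of $A$.

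For the backward direction, let $J \subsetneq A$ be a proper closed ideal. If $I + J \neq A$, then $(I+J)/I$ is a proper closed ideal of the soft algebra $A/I$, so $A/(I+J) \cong (A/I)/\bigl((I+J)/I\bigr)$ is nonunital; since $A/J$ surjects onto $A/(I+J)$ and unital algebras have unital quotients, $A/J$ itself must be nonunital. If $I + J = A$, then the second isomorphism theorem gives $A/J = (I+J)/J \cong I/(I \cap J)$, and $I \cap J \subsetneq I$ (otherwise $I \subseteq J$ together with $I + J = A$ would force $J = A$, contradicting properness), so softness of $I$ yields $I/(I \cap J) \cong A/J$ nonunital.

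The main obstacle is the forward direction for $I$: it requires the non-obvious observation that any unit of a quotient $I/K$ automatically extends to a central projection in $A/K$, and therefore produces a unital quotient of the ambient algebra $A$. The remaining arguments reduce to standard short exact sequence manipulations and the correspondence between ideals in $A$ and ideals in the quotient $A/I$.
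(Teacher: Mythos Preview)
Your proof is correct and follows essentially the same approach as the paper's. The forward direction for $I$ is identical in spirit: the paper phrases your centrality-plus-annihilator argument as the direct sum decomposition $A/K \cong (I/K) \oplus \Ann_{A/K}(p)$, with the complementary summand playing the role of your $L$. In the backward direction the paper avoids your case split by observing at once that $A/(I+J)$ is both soft (as a quotient of $A/I$) and unital (as a quotient of $A/J$), hence zero, forcing $I+J=A$; this is just the contrapositive of your Case~1, after which both arguments finish identically via $A/J \cong I/(I\cap J)$.
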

\begin{proof}
Assume first that $A$ is soft, and let $I$ be a closed ideal of $A$. 
Clearly, $A/I$ is soft. 
To see that $I$ is also soft, assume for the sake of contradiction that there is a proper ideal $J \subseteq I$ such that $I/J$ is unital, with unit $e \in I/J$.
Then $I/J$ is a unital ideal of $A/J$, which implies that $A/J$ decomposes as the direct sum of $I/J$ and the ideal $K := \Ann_{A/J}(e)$.
It follows that $I/J = (A/J)/K$ is a proper, unital quotient of $A$, a contradiction.

Now assume that $I$ and $A/I$ are soft. 
Arguing once again by contradiction, assume that there exists a proper ideal $J \subseteq A$ such that $A/J$ is unital. 
Then, $A/(I+J)$ is isomorphic to a quotient of both $A/I$ and $A/J$. 
Since $A/I$ is soft and $A/J$ is unital, we get that $A/(I+J)$ is both soft and unital and thus $A=I+J$.

It follows that $A/J \cong I/(I\cap J)$. 
Using that $I$ is soft, we see that $A=J$. 
This contradicts the fact that $J$ is proper.
\end{proof}

One says that a \ca{} $A$ is \emph{approximated} by a family $(A_\lambda)_{\lambda\in\Lambda}$ of sub-\ca{s} $A_\lambda \subseteq A$ if for every finite collection $a_1, \ldots, a_n \in A$ and $\varepsilon > 0$ there exists $\lambda \in \Lambda$ and $b_1, \ldots, b_n \in A_\lambda$ such that $\|a_j-b_j\| < \varepsilon$ for $j=1,\ldots,n$.

\begin{prp}
\label{prp:PermApprox}
A \ca{} is soft whenever it is approximated by a family of soft sub-\ca{s}.
\end{prp}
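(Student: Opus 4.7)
The plan is to verify condition~(4) of \cref{prp:CharSoftAlg}, since that characterization is intrinsically perturbative and therefore well-suited to the approximation hypothesis. The overall structure of the argument is the standard ``move to a sub-\ca{}, apply the hypothesis, transport back'' recipe; the only mildly delicate point is producing a \emph{positive} approximant inside some $A_\lambda$, which one handles with a short continuous functional calculus estimate.

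More concretely, given $a \in A_+$ and $\varepsilon > 0$, I would set $\delta := \varepsilon/3$ and use the approximation hypothesis to choose $\lambda \in \Lambda$ together with $x \in A_\lambda$ satisfying $\|a - x\| < \delta$. Replacing $x$ by its self-adjoint part $(x+x^*)/2 \in A_\lambda$ preserves the estimate since $a$ is self-adjoint, and applying the function $f(t) := \max\{t,0\}$ via continuous functional calculus then yields a positive element $a' \in (A_\lambda)_+$ with $\|a - a'\| < \delta$, using that $f$ is $1$-Lipschitz and $f(a) = a$.

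Next, I would invoke \cref{prp:CharSoftAlg}(4) \emph{inside} the soft \ca{} $A_\lambda$, applied to $a'$ with tolerance $\delta$, producing $b, c \in (A_\lambda)_+$ with $\|a' - b\| < \delta$, $b \perp c$, and $b$ in the closed ideal of $A_\lambda$ generated by $c$. Transporting the conclusions back to $A$ is then automatic: $\|a - b\| < 2\delta < \varepsilon$; the orthogonality $b \perp c$ is algebraic and persists in $A$; and the closed ideal of $A_\lambda$ generated by $c$ sits inside the closed ideal of $A$ generated by $c$, so $b \lhd c$ in $A$. This verifies condition~(4) of \cref{prp:CharSoftAlg} for $A$, and hence $A$ is soft. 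There is no genuine obstacle; everything reduces to the stability of the relations ``close to'', ``orthogonal to'', and ``contained in the closed ideal generated by'' under the inclusion of a sub-\ca{} into the ambient \ca{}.
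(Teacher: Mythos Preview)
Your argument is correct and follows essentially the same route as the paper: verify condition~(4) of \cref{prp:CharSoftAlg} by passing to some $A_\lambda$, producing a positive approximant there, applying softness of $A_\lambda$, and observing that orthogonality and ideal containment survive the inclusion $A_\lambda \subseteq A$. The paper does exactly this (with $\varepsilon/2$ in place of your $\varepsilon/3$) and simply cites another reference for the positivity step.

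One small caveat on that positivity step: the justification ``$f$ is $1$-Lipschitz and $f(a)=a$'' does not by itself give $\|a - a'\| < \delta$, since scalar Lipschitz continuity of $t \mapsto t_+$ does not imply the operator inequality $\|f(a)-f(y)\| \leq \|a-y\|$ (indeed $t_+$ is not operator-Lipschitz). What one does get easily is $\|a - y_+\| \leq \|a - y\| + \|y_-\| < 2\delta$, using that $\|a - y\| < \delta$ forces $\sigma(y) \subseteq [-\delta,\infty)$ and hence $\|y_-\| \leq \delta$. With your choice $\delta = \varepsilon/3$ this still yields $\|a - b\| < 3\delta = \varepsilon$, so the proof goes through unchanged; only the intermediate bookkeeping line ``$\|a-b\| < 2\delta$'' should read ``$< 3\delta$''.
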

\begin{proof}
Let $A$ be a \ca{}, and let $A_\lambda\subseteq A$ be soft sub-\ca{s} that approximate $A$.
To show that $A$ is soft, we verify condition~(4) in \cref{prp:CharSoftAlg}.
Let $a\in A_+$ and $\varepsilon>0$.
Choose $\lambda$ and $\bar{a}\in A_\lambda$ such that $\|a-\bar{a}\|<\tfrac{\varepsilon}{2}$.
Taking additional care, we may assume $\bar{a}$ to be positive; see, for example, the arguments in the proof of \cite[Proposition~3.7]{ThiVil21DimCu2}.

Using that $A_\lambda$ is soft, we can apply \cref{prp:CharSoftAlg} to obtain $b,c\in (A_\lambda)_+$ such that $\|\bar{a}-b\|<\tfrac{\varepsilon}{2}$, $b \perp c$, and $b \in \overline{\linSpan}A_\lambda c A_\lambda$.
Then
\[
\| a-b \|
\leq \| a-\bar{a} \| + \| \bar{a} - b \|
<\varepsilon, \andSep
b\in\overline{\linSpan}A_\lambda c A_\lambda \subseteq \overline{\linSpan}A c A
\]
which shows that $b$ and $c$, viewed in $A$, have the desired properties.
\end{proof}

\begin{prp}
\label{prp:PermLimit}
Softness passes to inductive limits.
\end{prp}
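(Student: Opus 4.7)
The plan is to reduce this to the approximation-permanence result already established in \cref{prp:PermApprox}, combined with the stability of softness under quotients from \cref{prp:PermIdealQuotExt}.

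More precisely, suppose $A = \varinjlim (A_\lambda, \varphi_{\lambda,\mu})$ is the inductive limit of an inductive system of soft \ca{s}, with canonical maps $\varphi_\lambda \colon A_\lambda \to A$. First I would observe that each image $\varphi_\lambda(A_\lambda) \subseteq A$ is a quotient of $A_\lambda$, and therefore soft by the quotient half of \cref{prp:PermIdealQuotExt}. Next I would verify that the family $\{\varphi_\lambda(A_\lambda)\}_\lambda$ approximates $A$ in the sense defined just before \cref{prp:PermApprox}; this is exactly the standard fact that $\bigcup_\lambda \varphi_\lambda(A_\lambda)$ is norm-dense in $A$ and that the system is directed, so any finite tuple in $A$ can be approximated by elements from a single $\varphi_\lambda(A_\lambda)$. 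Finally, applying \cref{prp:PermApprox} yields that $A$ is soft.

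There is essentially no obstacle: the only subtlety is that the connecting maps $\varphi_{\lambda,\mu}$ need not be injective, so one must remember to pass through quotients rather than treat the $A_\lambda$ as sub-\ca{s} of $A$ directly. This is handled cleanly by invoking \cref{prp:PermIdealQuotExt} to see that $\varphi_\lambda(A_\lambda) \cong A_\lambda / \ker \varphi_\lambda$ is soft.
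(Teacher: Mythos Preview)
Your proposal is correct and follows exactly the same approach as the paper: pass to the images $\varphi_\lambda(A_\lambda)$, which are soft as quotients of soft algebras by \cref{prp:PermIdealQuotExt}, and then apply \cref{prp:PermApprox} since these images approximate $A$. The paper's proof is essentially a terser version of what you wrote, including the observation about handling non-injective connecting maps via quotients.
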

\begin{proof}
This follows from \cref{prp:PermIdealQuotExt,prp:PermApprox} as in the proof of \cite[Proposition~2.4]{Thi21GenRnk}.
We include some details for the convenience of the reader.
Let $A$ be the inductive limit of an inductive system of soft \ca{s} $A_j$.
For each $j$, consider the natural map $A_j\to A$ and its image $B_j\subseteq A$.
Since $B_j$ is a quotient of $A_j$, it is soft by \cref{prp:PermIdealQuotExt}.
Now, using that $A$ is approximated by the family $B_j$, it follows from \cref{prp:PermApprox} that $A$ is soft.
\end{proof}

\begin{prp}
\label{prp:TensorProd}
Let $A,B$ be \ca{s}.
Then the following are equivalent:
\begin{enumerate}
\item
$A$ or $B$ is soft;
\item
The maximal tensor product $A\otimes_{\mathrm{max}}B$ is soft;
\item
The tensor product $A\otimes_{\varrho}B$ is soft for some (equivalently: any) cross-norm~$\varrho$;
\item
The minimal tensor product $A\otimes_{\mathrm{min}}B$ is soft.
\end{enumerate}
\end{prp}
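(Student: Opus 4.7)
The plan is to prove the cycle $(1) \Rightarrow (2) \Rightarrow (3) \Rightarrow (4) \Rightarrow (1)$. The middle implications $(2) \Rightarrow (3) \Rightarrow (4)$ are immediate from \cref{prp:PermIdealQuotExt}: for any C*-cross-norm $\varrho$, both $A \otimes_\varrho B$ and $A \otimes_{\min} B$ appear as quotients of $A \otimes_{\max} B$, and $A \otimes_{\min} B$ is a quotient of $A \otimes_\varrho B$. For $(4) \Rightarrow (1)$, I argue the contrapositive: if neither $A$ nor $B$ is soft, there are proper closed ideals $I \subseteq A$, $J \subseteq B$ with $A/I$ and $B/J$ both nonzero and unital, and then $(A/I) \otimes_{\min} (B/J)$ is a nonzero unital C*-algebra that appears as a quotient of $A \otimes_{\min} B$ via functoriality of the minimal tensor product, contradicting softness.

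The core step is $(1) \Rightarrow (2)$. By symmetry of tensor products, assume $A$ is soft, and verify condition~(4) of \cref{prp:CharSoftAlg} for $A \otimes_{\max} B$. Fix an approximate unit $(e_\lambda)_\lambda$ of $A$ with $e_\lambda \in A_c$ (which exists because $A_c$ is dense in $A_+$) and an arbitrary approximate unit $(f_\mu)_\mu$ of $B$; the product net $(e_\lambda \otimes f_\mu)$ is then an approximate unit of $A \otimes_{\max} B$ by a standard density argument. Given $x \in (A \otimes_{\max} B)_+$ and $\varepsilon > 0$, choose $\lambda, \mu$ such that
\[
\bigl\| (e_\lambda^{1/2} \otimes f_\mu^{1/2}) \, x \, (e_\lambda^{1/2} \otimes f_\mu^{1/2}) - x \bigr\| < \varepsilon,
\]
and apply condition~(3) of \cref{prp:CharSoftAlg} to $e_\lambda \in A_c$ to obtain $c_\lambda \in A_+$ with $e_\lambda \perp c_\lambda$ and $e_\lambda \lhd c_\lambda$. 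Take $y := (e_\lambda^{1/2} \otimes f_\mu^{1/2}) \, x \, (e_\lambda^{1/2} \otimes f_\mu^{1/2})$ and $z := c_\lambda \otimes f_\mu$; then $y \in (A \otimes_{\max} B)_+$, the inequality $\|x - y\| < \varepsilon$ holds by construction, and $y \perp z$ follows from $e_\lambda^{1/2} c_\lambda = 0$ (a consequence of $e_\lambda c_\lambda = 0$ and positivity).

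The remaining condition $y \lhd z$ is the most delicate step. It suffices to show that $e_\lambda \otimes f_\mu$ lies in the closed ideal of $A \otimes_{\max} B$ generated by $c_\lambda \otimes f_\mu$, since then the same holds for $e_\lambda^{1/2} \otimes f_\mu^{1/2}$ (which is in the hereditary subalgebra of $e_\lambda \otimes f_\mu$) and hence for $y$. Writing $e_\lambda = \lim_n \sum_k x_{n,k} c_\lambda y_{n,k}$ in $A$, each summand $(x_{n,k} c_\lambda y_{n,k}) \otimes f_\mu$ appears as the limit in $j$ of $(x_{n,k} \otimes f_\mu^{1/j}) \, (c_\lambda \otimes f_\mu) \, (y_{n,k} \otimes f_\mu^{1/j})$, using the inner approximation $f_\mu^{1/j} f_\mu f_\mu^{1/j} \to f_\mu$ in the $B$-factor. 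This approximation step, which avoids needing $B$ to be unital, is the main technical obstacle in the argument.
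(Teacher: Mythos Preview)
Your argument is correct. The implications $(2)\Rightarrow(3)\Rightarrow(4)\Rightarrow(1)$ are handled exactly as in the paper: softness passes to quotients, and the contrapositive of $(4)\Rightarrow(1)$ uses that $(A/I)\otimes_{\min}(B/J)$ is a nonzero unital quotient of $A\otimes_{\min}B$.

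For $(1)\Rightarrow(2)$, however, your route is genuinely different from the paper's. The paper argues by contradiction: if $A\otimes_{\max}B$ had a nonzero unital quotient, one realizes it via a $\ast$-representation $\pi$ on a Hilbert space whose image contains the identity operator; by the universal property of the maximal tensor product, $\pi$ decomposes as commuting representations $\pi_A$ and $\pi_B$, and one then uses an approximate-unit argument to see that some $\pi_A(a)$ and $\pi_B(b)$ are invertible, forcing both $A$ and $B$ to have unital quotients. Your proof instead works constructively inside the tensor product, verifying condition~(4) of \cref{prp:CharSoftAlg} directly: you compress a given positive element by $e_\lambda^{1/2}\otimes f_\mu^{1/2}$ and exhibit an orthogonal ideal-generator $c_\lambda\otimes f_\mu$ using only the softness of~$A$. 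The paper's approach is shorter and exploits representation theory to avoid any bookkeeping with ideals in tensor products; your approach is more hands-on and stays entirely within the internal characterization of softness already developed in the paper, at the cost of the slightly delicate approximation $f_\mu^{1/j}f_\mu f_\mu^{1/j}\to f_\mu$ in the second factor. One minor point: the existence of an approximate unit in $A_c$ is not literally ``because $A_c$ is dense in $A_+$'', but rather follows from the standard fact that cut-downs $(u_\lambda-\varepsilon)_+$ of any approximate unit lie in $A_c$; the conclusion is correct regardless.
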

\begin{proof}
For every cross-norm $\varrho$, the tensor product $A\otimes_{\varrho}B$ is a quotient of $A\otimes_{\mathrm{max}}B$, and $A\otimes_{\mathrm{min}}B$ is a quotient of $A\otimes_{\varrho}B$.
Since softness passes to quotients by \cref{prp:PermIdealQuotExt}, we see that~(2) implies~(3), and that~(3) implies~(4).

Let us show that~(4) implies~(1).
To reach a contradiction, assume that neither $A$ nor $B$ is soft.
Then there are proper closed ideals $I \subseteq A$ and $J \subseteq B$ such that $A/I$ and $B/J$ are unital.
The quotient maps $A \to A/I$ and $B \to B/J$ induce a natural surjective $\ast$-homomorphism $A\otimes_{\mathrm{min}}B \to (A/I)\otimes_{\mathrm{min}}(B/J)$ (see, for example, \cite[II.9.6.6]{Bla06OpAlgs}).
Since $(A/I)\otimes_{\mathrm{min}}(B/J)$ is unital, this shows that $A\otimes_{\mathrm{min}}B$ is not soft.

Finally, let us show that~(1) implies~(2).
To reach a contradiction, assume that $A\otimes_{\mathrm{max}}B$ is not soft.
Then there exists a $\ast$-representation $\pi \colon A\otimes_{\mathrm{max}}B \to \Bdd(H)$ on some Hilbert space $H$ such that the identity $1 \in \Bdd(H)$ is contained in the image of~$\pi$.
By \cite[Theorem~II.9.2.1]{Bla06OpAlgs} there exist unique $\ast$-representations $\pi_A \colon A \to \Bdd(H)$ and $\pi_B \colon B \to \Bdd(H)$ such that $\pi(a\otimes b) = \pi_A(a)\pi_B(b) = \pi_B(b)\pi_A(a)$ for all $a \in A$ and $b \in B$.

Choose approximate units $(a_\lambda)_\lambda$ for $A$ and $(b_\mu)_\mu$ for $B$.
Then $(a_\lambda\otimes b_\mu)_{\lambda,\mu}$ is an approximate unit for $A\otimes_{\mathrm{max}}B$ and it follows that $\pi(a_\lambda\otimes b_\mu)$ converges to $1 \in \Bdd(H)$ in norm.
Thus, by setting $a := a_\lambda$ and $b := b_\mu$ for sufficiently large $\lambda$ and~$\mu$, the operator $\pi(a \otimes b)$ is invertible.
Using that $\pi(a\otimes b) = \pi_A(a)\pi_B(b) = \pi_B(b)\pi_A(a)$, we deduce that $\pi_A(a)$ and $\pi_B(b)$ are invertible.
This implies that $1$ belongs to the image of $\pi_A$ and $\pi_B$.
Hence, $\ker(\pi_A)$ and $\ker(\pi_B)$ are proper closed ideals of $A$ and~$B$ with unital quotients, showing that neither $A$ nor $B$ is soft.
\end{proof}

The next result shows that softness is a stable property.

\begin{prp}
Let $A$ be a \ca, and let $n\geq 1$.
Then $A$ is soft if and only if $M_n(A)$ is.
\end{prp}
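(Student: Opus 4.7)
The plan is to reduce this immediately to the tensor product result. Under the canonical identification $M_n(A) \cong M_n(\mathbb{C}) \otimes A$, and since $M_n(\mathbb{C})$ is finite-dimensional (hence nuclear), there is a unique C*-tensor norm, so all versions of the tensor product agree. Thus \cref{prp:TensorProd} applies directly.

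First I would observe that $M_n(\mathbb{C})$ is not soft: it is unital and simple, so $\{0\}$ is a proper closed ideal whose quotient $M_n(\mathbb{C})/\{0\} = M_n(\mathbb{C})$ is unital, violating \cref{dfn:Soft}. Then by \cref{prp:TensorProd} applied to the pair $(A, M_n(\mathbb{C}))$, the algebra $M_n(A) \cong A \otimes M_n(\mathbb{C})$ is soft if and only if $A$ or $M_n(\mathbb{C})$ is soft. Since $M_n(\mathbb{C})$ is ruled out, this collapses to the equivalence $A \text{ soft} \Leftrightarrow M_n(A) \text{ soft}$. The case $n = 1$ is trivial.

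There is essentially no obstacle, as all the work was done in \cref{prp:TensorProd}. As a sanity check, one could alternatively give a direct proof via the standard bijection $I \leftrightarrow M_n(I)$ between closed ideals of $A$ and $M_n(A)$, using that $M_n(A)/M_n(I) \cong M_n(A/I)$ and that $M_n(B)$ is unital if and only if $B$ is unital (the $(1,1)$-entry of the unit of $M_n(B)$ must be a unit for $B$, as one verifies by multiplying against the matrix units $b \otimes e_{11}$). Either route yields the result, but the tensor-product reduction is the cleanest.
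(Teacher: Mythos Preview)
Your proposal is correct and follows essentially the same approach as the paper: both directions are obtained from \cref{prp:TensorProd} together with the observation that $M_n(\mathbb{C})$ is not soft. You include a bit more justification (uniqueness of the tensor norm, the explicit reason $M_n(\mathbb{C})$ fails softness) and an alternative direct argument via the ideal bijection, but the core reasoning matches the paper's proof.
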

\begin{proof}
The forward implication follows directly from \cref{prp:TensorProd}.

Conversely, assume that $M_n(A)$ is soft. Thus, using \cref{prp:TensorProd}, we know that either $M_n (\mathbb{C})$ or $A$ is soft. Since $M_n (\mathbb{C})$ is not soft, we must have $A$ soft, as desired.
\end{proof}

\begin{qst}
Does softness satisfy the L\"{o}wenheim-Skolem condition?
Here, a property $\mathcal{P}$ of \ca{s} is said to satisfy the \emph{L\"{o}wenheim-Skolem condition} if for every \ca{} $A$ satisfying $\mathcal{P}$, there exists a $\sigma$-complete, cofinal subset of separable sub-\ca{s} of $A$ that each satisfy $\mathcal{P}$;
see \cite[Section~4]{ThiVil21arX:NowhereScattered} for further details.
\end{qst}

\section{Soft operators}

In this section, we introduce and study the notion of \emph{softness} for operators in \ca{s};
see \cref{dfn:SoftElement}.
We will see in \cref{sec:AbSoftEl} that this concept is closely related to the notion of softness for elements in a Cuntz semigroup.

\begin{dfn}
\label{dfn:SoftElement}
We say that an element $x$ in \ca{} $A$ is \emph{soft} if the hereditary sub-\ca{} $\overline{x^*Ax}$ is soft.
\end{dfn}

The next result shows that we could equivalently use $\overline{xAx^*}$ instead of $\overline{x^*Ax}$ in the above definition.
We give further characterizations of softness for positive operators in \cref{prp:CharSoftElement} below.

\begin{prp}
\label{prp:SymmetryDfnSoftElement}
Let $A$ be a \ca, and $x \in A$.
Then the following are equivalent:
\begin{enumerate}
\item
The element $x$ is soft.
\item
The element $x^*$ is soft.
\item
The element $x^*x$ is soft.
\item
The element $xx^*$ is soft.
\end{enumerate}
\end{prp}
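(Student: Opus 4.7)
My plan is to establish the three equivalences (1) $\Leftrightarrow$ (3), (2) $\Leftrightarrow$ (4), and (3) $\Leftrightarrow$ (4), from which all four conditions are clearly equivalent. For (1) $\Leftrightarrow$ (3), I would note that $\overline{x^*Ax}$ is a hereditary sub-\ca{} of $A$ (a standard fact) which contains the positive element $x^*x = \lim_\lambda x^* e_\lambda x$ obtained from any approximate unit $(e_\lambda)$ of $A$. Since any hereditary sub-\ca{} containing a positive element $a$ must contain the whole hereditary subalgebra $\overline{aAa}$ it generates, we obtain $\overline{x^*Ax} = \overline{(x^*x)A(x^*x)}$, which yields (1) $\Leftrightarrow$ (3). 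Applying the same identification to $x^*$ in place of $x$ gives $\overline{xAx^*} = \overline{(xx^*)A(xx^*)}$, and hence (2) $\Leftrightarrow$ (4).

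For (3) $\Leftrightarrow$ (4), I would first prove the following intermediate fact and then apply it to both $x^*x$ and $xx^*$: a positive element $a \in A$ generates a soft hereditary subalgebra $\overline{aAa}$ if and only if, for every closed ideal $I \subseteq A$ with $a \notin I$, zero is a limit point of $\mathrm{sp}(a+I) \subseteq [0,\infty)$. The derivation uses the correspondence between closed ideals of $\overline{aAa}$ and closed ideals of $A$ via $J = \overline{aAa}\cap I$, together with the identification $\overline{aAa}/(\overline{aAa}\cap I) \cong \overline{(a+I)(A/I)(a+I)}$ and the elementary observation that a hereditary subalgebra generated by a positive element $b$ is unital exactly when $0$ is isolated in or absent from $\mathrm{sp}(b)$. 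Applied to $a = x^*x$ and to $a = xx^*$, the equivalence then follows from the two facts that $x \in I \Leftrightarrow x^*x \in I \Leftrightarrow xx^* \in I$ (so the relevant ideals coincide) and $\mathrm{sp}(y^*y) \setminus \{0\} = \mathrm{sp}(yy^*) \setminus \{0\}$ for any element $y$ in a \ca{} (so zero is a limit point of one spectrum iff of the other).

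The main obstacle is that the intermediate fact above anticipates (a special case of) the later \cref{prp:CharSoftElement}, so some care is required to keep the logical dependencies clean. An alternative route that sidesteps this duplication is to exhibit directly a $*$-isomorphism $\overline{x^*Ax} \cong \overline{xAx^*}$ by conjugation with the partial isometry $v$ from the polar decomposition $x = v|x|$ in $A^{**}$, using the identities $vx^* = |x^*| = xv^*$ to verify that $v\,(\overline{x^*Ax})\,v^* = \overline{xAx^*}$ and that the resulting map is a $*$-isomorphism; this yields (1) $\Leftrightarrow$ (2) directly, and combined with the first paragraph completes the argument.
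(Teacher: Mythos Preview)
Your proposal is correct, and the ``alternative route'' you describe in the final paragraph is exactly the paper's proof: the paper cites \cite[Proposition~II.3.4.2]{Bla06OpAlgs} for $\overline{x^*Ax} = \overline{x^*xAx^*x}$ to get (1)$\Leftrightarrow$(3) and (2)$\Leftrightarrow$(4), and then uses the polar-decomposition isomorphism $a \mapsto vav^*$ between $\overline{x^*Ax}$ and $\overline{xAx^*}$ for (1)$\Leftrightarrow$(2). Your spectral approach to (3)$\Leftrightarrow$(4) via $\spec(y^*y)\setminus\{0\} = \spec(yy^*)\setminus\{0\}$ in every quotient is a genuine alternative that also works; as you already note, it essentially reproves part of \cref{prp:CharSoftElement}, so the polar-decomposition route is cleaner here and keeps the dependencies linear.

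One small point: your justification of $\overline{x^*Ax} = \overline{(x^*x)A(x^*x)}$ only argues the inclusion $\supseteq$ explicitly (that the hereditary algebra $\overline{x^*Ax}$ contains $x^*x$ and hence the hereditary subalgebra it generates). For the reverse inclusion you should also observe, e.g., that $x^*x$ is strictly positive in $\overline{x^*Ax}$, or simply cite the standard identity as the paper does.
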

\begin{proof}
By \cite[Proposition~II.3.4.2]{Bla06OpAlgs}, we have $\overline{x^*Ax} = \overline{x^*xAx^*x}$.
This shows that~(1) and~(3) are equivalent, and similarly that~(2) and~(4) are equivalent.
The equivalence of~(1) and~(2) follows using that $\overline{x^*Ax}$ and $\overline{xAx^*}$ are isomorphic.
Indeed, if $x=v|x|$ is the polar decomposition of $x$ in $A^{**}$, then $a\mapsto vav^*$ is a well-defined isomorphism $\overline{x^*Ax} \to \overline{xAx^*}$;
see also \cite[Lemma~4.2]{OrtRorThi11CuOpenProj}.
\end{proof}

\begin{rmk}
\label{rmk:SoftElement}
The property of softness for operators depends on the containing \ca{}.
For example, if $a$ is a strictly positive element in a simple, non-unital \ca{} $A$, then $a$ is soft in $A$ since $\overline{aAa}=A$, which is soft by \cref{prp:SimpleCA}, while $a$ is not soft as an element of the commutative \ca{} $C^*(a)$.
\end{rmk}

Following Robert, \cite{Rob16RmksZstblProjless}, we say that a \ca{} is \emph{projectionless} if none of its quotients contains a nonzero projection.

\begin{prp}
A \ca{} $A$ is projectionless if and only if every element of $A$ is soft.
\end{prp}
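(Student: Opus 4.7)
My plan is to prove both directions via the ideal correspondence for hereditary sub-\ca{s}. The key input, which I would invoke as a standard fact (provable via an approximate unit of the ideal together with hereditariness of $B$), is the following: if $B$ is a hereditary sub-\ca{} of $A$ and $J$ is a closed ideal of $B$, then letting $I := \overline{AJA}$ denote the closed ideal of $A$ generated by $J$, one has $B \cap I = J$, so that the natural surjection $B \to B/J$ extends to an isometric embedding $B/J \hookrightarrow A/I$.

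For the forward direction, assume $A$ is projectionless and fix $x \in A$. Set $B := \overline{x^*Ax}$ and suppose for contradiction that $B$ is not soft, so that there is a proper closed ideal $J \subseteq B$ with $B/J$ unital. The unit of $B/J$ is a nonzero projection, and the embedding $B/J \hookrightarrow A/\overline{AJA}$ transfers it to a nonzero projection in a quotient of $A$, contradicting projectionlessness.

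For the reverse direction I would argue by contrapositive: if $A$ is not projectionless, exhibit a non-soft element. Pick a closed ideal $I \subseteq A$ such that $A/I$ contains a nonzero projection $q$, and let $\pi \colon A \to A/I$ denote the quotient map. I would first lift $q$ to a positive element $a \in A_+$ with $\pi(a) = q$ as follows: take any self-adjoint lift $s$ of $q$, and observe that $\pi(s_+) = q_+ = q$ since $q \geq 0$. Set $B := \overline{aAa}$. Then $\pi(B) = \overline{q(A/I)q}$, which equals the corner $q(A/I)q$ since $q$ is a projection, and this corner is a unital \ca{} with unit $q$. Since $\ker(\pi|_B) = B \cap I$, we get $B/(B \cap I) \cong \pi(B)$, a nonzero unital quotient of $B$; hence $a$ is not soft, completing the contradiction.

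The main obstacle is the ideal-correspondence lemma used in the forward direction; everything else is essentially bookkeeping once it is available. An alternative route for the forward direction would be to verify condition~(4) of \cref{prp:CharSoftAlg} for $B$ directly, but projectionlessness does not interact with that characterization as transparently as with the quotient-level formulation, so the ideal correspondence seems to be the natural tool.
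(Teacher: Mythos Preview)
Your proposal is correct and follows essentially the same approach as the paper: both directions rely on the ideal correspondence for hereditary sub-\ca{s} (that $B/J$ embeds into $A/\overline{AJA}$), with the forward direction transferring a hypothetical unit of $B/J$ to a nonzero projection in a quotient of $A$, and the reverse direction lifting a nonzero projection $q \in A/I$ to some $a \in A_+$ and observing that $\overline{aAa}$ has the nonzero unital quotient $q(A/I)q$. The paper additionally remarks that the image of $B/J$ in $A/\overline{AJA}$ is in fact hereditary, but only the embedding is needed for the argument, exactly as you use it.
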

\begin{proof}
Let $A$ be a \ca.
Assume first that $A$ is projectionless.
Given any hereditary sub-\ca{} $B \subseteq A$, and a closed ideal $I \subseteq B$, let $J$ be the closed ideal of $A$ generated by $I$.
Then the quotient $B/I$ is naturally isomorphic to a hereditary sub-\ca{} of $A/J$.
Since $A/J$ contains no nonzero projections, $B/I$ cannot be unital and nonzero.
This shows that every hereditary sub-\ca{} of $A$ is soft, and hence so is every element of $A$.

Conversely, assume that $A$ is not projectionless.
Choose a closed ideal $I \subseteq A$ and a nonzero projection $p \in A/I$.
Pick a positive lift $a \in A$ of $p$.
Then $a$ is not soft since $\overline{aAa}$ has the nonzero, unital quotient $p(A/I)p$.
\end{proof}

\begin{ntn}
\label{ntn:g-epsilon}
Given $\varepsilon>0$, we let $g_\varepsilon\colon\RR\to\RR$ be the continuous function given by 
\[
g_\varepsilon(t) = \begin{cases}
0, & \text{if } t\in(-\infty,0]\cup[\varepsilon,\infty) \\
t, & \text{if } t\in[0,\tfrac{\varepsilon}{2}] \\
\varepsilon-t, & \text{if } t\in[\tfrac{\varepsilon}{2},\varepsilon] \\
\end{cases}
\]
\end{ntn}

The equivalence of~(1) and~(3) in the next result is analogous to Bosa's characterization of stable elements; 
see \cite[Lemma~2.3]{Bos22StableElements}.

\begin{prp}
\label{prp:CharSoftElement}
Let $A$ be a \ca, and $a\in A_+$.
Then the following are equivalent:
\begin{enumerate}
\item
The element $a$ is soft.
\item
For every closed ideal $I \subseteq A$, either $a \in I$ or the spectrum of $a+I \in A/I$ has $0$ as a limit point.
\item
We have $a \lhd g_\varepsilon(a)$ for every $\varepsilon>0$.
\item
For every $\varepsilon>0$ there exists $b\in\overline{aAa}_+$ with $b \perp (a-\varepsilon)_+$ and $a \lhd b$.
\end{enumerate}
\end{prp}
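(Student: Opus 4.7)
The plan is to establish the equivalences via the cycle $(1) \Leftrightarrow (2)$ together with $(2) \Rightarrow (3) \Rightarrow (4) \Rightarrow (2)$.

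For $(1) \Leftrightarrow (2)$, I would use the fact that every closed ideal of $B := \overline{aAa}$ arises as $B \cap I$ for some closed ideal $I \subseteq A$, together with the natural isomorphism $B/(B \cap I) \cong \overline{(a+I)(A/I)(a+I)}$ induced by the quotient map $A \to A/I$. A short continuous functional calculus argument then shows that for a nonzero positive element $x$ in a \ca{} $C$, the hereditary sub-\ca{} $\overline{xCx}$ is unital precisely when $0$ is isolated in $\spec(x)$. Combining these facts, the softness of $B$ translates directly into the dichotomy of condition~(2): for every closed ideal $I$ of $A$, either $a \in I$ (so the relevant quotient of $B$ is zero) or $0$ is a limit point of $\spec(a+I)$ (so the quotient is nonzero and nonunital).

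For $(2) \Rightarrow (3)$, given $\varepsilon > 0$, let $I$ be the closed ideal of $A$ generated by $g_\varepsilon(a)$. Then $g_\varepsilon(a+I) = 0$ in $A/I$, which by functional calculus forces $\spec(a+I) \cap (0, \varepsilon) = \emptyset$, so $0$ is not a limit point of $\spec(a+I)$. Condition~$(2)$ then yields $a \in I$, that is, $a \lhd g_\varepsilon(a)$. The implication $(3) \Rightarrow (4)$ is immediate by taking $b := g_\varepsilon(a) \in \overline{aAa}_+$, since $g_\varepsilon$ vanishes on $[\varepsilon, \infty)$ while $(\freeVar - \varepsilon)_+$ vanishes on $[0, \varepsilon]$.

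The step requiring the most care is $(4) \Rightarrow (2)$, which I would prove by contrapositive. Suppose $I \subseteq A$ is a closed ideal with $a \notin I$ and $0$ not a limit point of $\spec(a+I)$; choose $\varepsilon > 0$ with $\spec(a+I) \subseteq \{0\} \cup [\varepsilon, \infty)$. Applying~$(4)$ with $\varepsilon/2$ yields $b \in \overline{aAa}_+$ with $b \perp (a - \varepsilon/2)_+$ and $a \lhd b$. The key observation is that the spectral assumption makes the function $t \mapsto (t - \varepsilon/2)_+$ strictly positive on $\spec(a+I) \setminus \{0\}$, so $((a+I) - \varepsilon/2)_+$ is a strictly positive element of the hereditary sub-\ca{} $C := \overline{(a+I)(A/I)(a+I)}$. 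Since $b+I \in C$ satisfies $(b+I) \cdot ((a+I) - \varepsilon/2)_+ = 0$, strict positivity forces $b+I = 0$, i.e., $b \in I$. But then $a \lhd b$ gives $a \in I$, contradicting $a \notin I$.
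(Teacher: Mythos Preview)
Your proof is correct and follows essentially the same approach as the paper. The only structural difference is that the paper proves the single cycle $(1)\Rightarrow(2)\Rightarrow(3)\Rightarrow(4)\Rightarrow(1)$, whereas you first establish $(1)\Leftrightarrow(2)$ via the ideal correspondence for hereditary sub-\ca{s} and then close the smaller cycle $(2)\Rightarrow(3)\Rightarrow(4)\Rightarrow(2)$; the core arguments---the functional-calculus reduction for $(2)\Rightarrow(3)$, taking $b=g_\varepsilon(a)$ for $(3)\Rightarrow(4)$, and the strict-positivity contradiction using $(a-\varepsilon/2)_+$ in the quotient---are identical to the paper's.
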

\begin{proof}
Let us show that~(1) implies~(2).
Assuming that~$a$ is soft, let $I \subseteq A$ be a closed ideal such that $a \notin I$.
To reach a contradiction, assume that $0$ is isolated in the spectrum of $a+I$.
Then the characteristic function of $(0,\infty)$ is continuous on the spectrum of $a+I$, and applying functional calculus we obtain a projection in~$A/I$ that generates the same hereditary sub-\ca{} as $a+I$.
Thus, $\overline{aAa}$ has a nonzero, unital quotient, which is the desired contradiction.

Let us show that~(2) implies~(3).
Assuming~(2), let $\varepsilon>0$.
Set $I:=\overline{\linSpan} Ag_\varepsilon(a)A$. 
Then the spectrum of~$a+I$ is contained in $\{0\}\cup[\varepsilon,\infty)$.
By assumption, this implies that $a \in I$, and thus $a \lhd g_\varepsilon(a)$.

It is clear that~(3) implies~(4).
To show that~(4) implies~(1), let $I\subseteq \overline{aAa}$ be a proper closed ideal, and let $\pi\colon \overline{aAa}\to \overline{aAa}/I$ be the quotient map.
To reach a contradiction, assume that $\overline{aAa}/I$ is unital and $\overline{aAa}/I \neq \{0\}$.
Since $a$ is strictly positive, $\pi(a)$ is invertible in $\overline{aAa}/I$, and we obtain $\varepsilon>0$ such that the spectrum of $\pi(a)$ is contained in $[\varepsilon,\infty)$. 

Applying the assumption for $\tfrac{\varepsilon}{2}$, we obtain a full element $b\in {\overline{aAa}}_+$ such that $b\perp(a-\tfrac{\varepsilon}{2})_+$.
We have $\pi((a-\tfrac{\varepsilon}{2})_+)=(\pi(a)-\tfrac{\varepsilon}{2})_+$, whose spectrum is contained in $[\tfrac{\varepsilon}{2},\infty)$.
Thus, $\pi((a-\tfrac{\varepsilon}{2})_+)$ is strictly positive in $\overline{aAa}/I$, which implies that $\pi(b)=0$, a contradiction.
\end{proof}

A completely positive map between \ca{s} is said to have order-zero if it preserves zero-products among positive elements.
By \cite[Corollary~4.8]{GarThi22arX:WeightedHomo}, the class of completely positive, order zero maps can equivalently be described as the class of positive, zero-product preserving maps.

It remains unclear if one can remove the positivity assumption for the soft element in the next result;
see \cref{qst:PresSoft}.

\begin{prp}
\label{prp:OrderZeroMapPresSoft}
Let $\varphi \colon A \to B$ be a completely positive, order-zero map between \ca{s}, and let $a \in A_+$ be soft.
Then $\varphi(a)$ is soft.
\end{prp}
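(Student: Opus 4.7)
My plan is to combine the Winter--Zacharias structure theorem for completely positive order-zero maps with the spectral characterization of softness in \cref{prp:CharSoftElement}. By Winter--Zacharias, there exist a $\ast$-homomorphism $\pi\colon A \to M(C^*(\varphi(A)))$ and a positive element $h \in M(C^*(\varphi(A)))_+$ commuting with $\pi(A)$ such that $\varphi(c) = h\pi(c)$ for every $c \in A$. In particular, $\varphi(f(a)) = h\,f(\pi(a))$ for every continuous $f\colon[0,\|a\|]\to\RR$ with $f(0)=0$, since $\pi$ is a $\ast$-homomorphism.

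To establish softness of $\varphi(a)$ via \cref{prp:CharSoftElement}, I would fix a closed ideal $J \subseteq B$ with $\varphi(a) \notin J$ and show that $0$ is a limit point of $\sigma_{B/J}(\varphi(a) + J)$. Composing $\varphi$ with the quotient map $B \to B/J$, the problem reduces to showing that $0$ is a limit point of $\sigma(\varphi(a))$ in $B$ under the assumption $\varphi(a) \neq 0$.

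I would then argue by contradiction. Assume $\sigma(\varphi(a)) \cap (0,\delta_0) = \emptyset$ for some $\delta_0 > 0$. In the commutative $C^*$-subalgebra $C^*(h,\pi(a)) \cong C_0(Z)$ of $M(C^*(\varphi(A)))$, identify $h$ and $\pi(a)$ with continuous non-negative functions $\eta$ and $\alpha$ on $Z$; then $\varphi(a)$ corresponds to $\eta\alpha$. Choose $0 < \delta < \delta_0/(\|h\|+1)$. At any $z \in Z$ with $\eta(z) > 0$ and $0 < \alpha(z) < \delta$, one would have $0 < \eta(z)\alpha(z) < \delta_0$, contradicting the assumed spectral gap. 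Hence $\eta$ vanishes on the open set $\{0 < \alpha < \delta\}$, which gives $\eta\,g_\delta(\alpha) = 0$, i.e., $hg_\delta(\pi(a)) = 0$; by the formula above this translates to $\varphi(g_\delta(a)) = 0$.

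Finally, since $h$ commutes with the range of $\pi$, for any $x, y \in A$,
\[
\varphi(xg_\delta(a)y) \;=\; h\pi(x)g_\delta(\pi(a))\pi(y) \;=\; \pi(x)\bigl[hg_\delta(\pi(a))\bigr]\pi(y) \;=\; 0,
\]
so $\varphi$ vanishes on $\overline{Ag_\delta(a)A}$. But softness of $a$ together with \cref{prp:CharSoftElement} yields $a \in \overline{Ag_\delta(a)A}$, whence $\varphi(a) = 0$, the desired contradiction. The main subtle point is the spectral argument in $C^*(h,\pi(a))$ linking a hypothetical spectral gap of $\varphi(a)$ to the vanishing of $hg_\delta(\pi(a))$; once this is in hand, the remaining steps are routine manipulations with the Winter--Zacharias structure.
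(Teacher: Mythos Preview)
Your argument is correct, but it takes a genuinely different route from the paper's proof. The paper verifies condition~(4) of \cref{prp:CharSoftElement} directly: given $\varepsilon>0$, it sets $b:=\varphi(g_\varepsilon(a))$ and checks that $b\in\overline{\varphi(a)B\varphi(a)}$ (from $g_\varepsilon(a)\leq a$), that $b\perp(\varphi(a)-\varepsilon)_+$ (from $g_\varepsilon(a)\perp(a-\varepsilon)_+$ together with $(\varphi(a)-\varepsilon)_+\leq\varphi((a-\varepsilon)_+)$), and that $\varphi(a)\lhd b$ (from $a\lhd g_\varepsilon(a)$ and the fact that $\varphi$ induces a generalized \CuMor{} on Cuntz semigroups). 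Your proof instead targets condition~(2) of \cref{prp:CharSoftElement} and argues by contradiction via the Winter--Zacharias structure theorem, working in the commutative sub-\ca{} $C^*(h,\pi(a))$ to deduce $h\,g_\delta(\pi(a))=0$ from a hypothetical spectral gap; both arguments ultimately hinge on $a\lhd g_\delta(a)$. The paper's approach is constructive and stays within the Cuntz-semigroup machinery already set up (no explicit appeal to the structure theorem or Gelfand theory), while your approach is more self-contained in that it does not invoke the induced \CuMor, at the cost of a contradiction argument and the spectral-permanence step (which you correctly flag as the subtle point; note that the spectra of $\varphi(a)$ in $B$ and in $C^*(h,\pi(a))$ agree away from $0$, so the gap $(0,\delta_0)$ transfers).
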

\begin{proof}
We use the characterization of softness from \cref{prp:CharSoftElement}(4).
So let $\varepsilon>0$.
We need to find a positive element $b \in \overline{\varphi(a)B\varphi(a)}$ orthogonal to $(\varphi(a)-\varepsilon)_+$ and such that $\varphi(a) \lhd b$.

Using that $a$ is soft, and applying \cref{prp:CharSoftElement}(3), we see that $a \lhd g_\varepsilon(a)$.
Set $b := \varphi(g_\varepsilon(a))$.
We have $g_\varepsilon(a) \leq a$, and therefore $b = \varphi(g_\varepsilon(a)) \leq \varphi(a)$, and thus $b \in \overline{\varphi(a)B\varphi(a)}$.

Further, we have $g_\varepsilon(a) \perp (a-\varepsilon)_+$, and since $\varphi$ preserves orthogonality among positive elements, we see that $b \perp \varphi((a-\varepsilon)_+)$.
As noted in the proof of \cite[Proposition~2.2.7]{AntPerThi18TensorProdCu}, we have $(\varphi(a)-\varepsilon)_+ \leq \varphi((a-\varepsilon)_+)$, which implies that $b$ is orthogonal to $(\varphi(a)-\varepsilon)_+$.

Let $x,y \in A_+$ with $x \lhd y$.
Then $[x] \leq \infty [y]$ in $\Cu(A)$, and since $\varphi$ naturally induces a generalized \CuMor{} $\Cu(A)\to\Cu(B)$ by \cite[Corollary~4.5]{WinZac09CpOrd0} (see also \cite[3.2.5]{AntPerThi18TensorProdCu}), we obtain that $[\varphi(x)] \leq \infty [\varphi(y)]$ in $\Cu(B)$, whence $\varphi(x) \lhd \varphi(y)$.
Applying this argument for $a \lhd g_\varepsilon(a)$, we deduce that $\varphi(a) \lhd b$, which shows that $b$ has the desired properties.
\end{proof}

Applying the above result for the quotient map by a closed ideal, we get:

\begin{prp}
\label{prp:SoftImageInQuotient}
Let $I \subseteq A$ be a closed ideal in a \ca{} $A$, and let $a \in A_+$ be soft.
Then $a+I \in A/I$ is soft.
\end{prp}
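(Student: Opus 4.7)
The plan is to recognize this as an immediate application of \cref{prp:OrderZeroMapPresSoft}. Consider the quotient \stHom{} $\pi \colon A \to A/I$. Every \stHom{} between \ca{s} is completely positive and preserves products, and in particular preserves zero products among positive elements; hence $\pi$ is a completely positive, order-zero map in the sense used in the preceding proposition.

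Applying \cref{prp:OrderZeroMapPresSoft} with $\varphi = \pi$ and with the given soft positive element $a \in A_+$, we conclude that $\pi(a) = a + I$ is soft in $A/I$, as desired.

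The only point worth double-checking is that the hypothesis of \cref{prp:OrderZeroMapPresSoft} is genuinely satisfied by an arbitrary \stHom{}, but this is standard: a \stHom{} is positive and multiplicative, so it is completely positive and order-zero. There is no real obstacle here, which is why the statement is presented as a quick corollary of the order-zero result.
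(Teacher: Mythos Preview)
Your proposal is correct and matches the paper's own approach exactly: the paper also derives this proposition by applying \cref{prp:OrderZeroMapPresSoft} to the quotient \stHom{} $\pi\colon A\to A/I$, which is completely positive and order-zero.
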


\begin{qst}
\label{qst:PresSoft}
Let $\varphi \colon A \to B$ be a completely positive, order-zero map between \ca{s}.
Given a (not necessarily positive) soft element $x \in A$, is $\varphi(x)$ soft?
\end{qst}

\section{Shades of softness in Cu-semigroups}
\label{sec:Shades}

In this section, we introduce the concepts of strong softness, weak softness and functional softness for elements in \CuSgp{s};
see \cref{dfn:soft}.
(We recall the definition of \CuSgp{s} in \cref{pgr:CuSgps} below.)
We show that every strongly soft element is weakly soft, and that every weakly soft element is functionally soft. 
For residually stably finite \CuSgp{s}, we prove that these three notions coincide;
see \cref{prp:soft}. 

Functionally soft elements were first considered in \cite[Definition~5.3.1]{AntPerThi18TensorProdCu}, where they are just called `soft' elements.
Strongly soft elements were implicitly also considered in \cite[Section~5.3]{AntPerThi18TensorProdCu}.
We use the term `functionally soft' to clarify the difference with the other notions.

We analyze the relation between soft elements in a \ca{} and (strongly) soft elements in its Cuntz semigroup in \cref{prp:SoftCaVsCu}.
In particular, in a stably finite \ca{}, a positive element is soft if and only if its Cuntz class is.

\begin{pgr}
\label{pgr:CuSgps}
Given two elements $x,y$ in a partially ordered monoid where suprema of increasing sequences exist, recall that we write $x\ll y$ if, for every increasing sequence $(z_n)_n$ such that $y\leq \sup_n z_n$, there exists $n\in\NN$ such that $x\leq z_n$.
 
Following \cite{CowEllIva08CuInv}, one says that a partially ordered, commutative monoid $S$ is a \CuSgp{} if it satisfies the conditions below:
\begin{enumerate}
\item[\axiomO{1}] 
Every increasing sequence in $S$ has a supremum.
\item[\axiomO{2}] 
Every element in $S$ can be written as the supremum of a $\ll$-increasing sequence.
\item[\axiomO{3}] 
Given $x',x,y',y\in S$ with $x'\ll x$ and $y'\ll y$, one has $x'+y'\ll x+y$.
\item[\axiomO{4}] 
Given two increasing sequences $(x_n)_n ,(y_n)_n$ in $S$, one has $\sup_n (x_n +y_n)=\sup_n x_n + \sup_n y_n$.
\end{enumerate}
 
As shown in \cite{CowEllIva08CuInv}, the Cuntz semigroup $\Cu (A)$ of any \ca{} $A$ is a \CuSgp{}. 
Further, it was subsequently shown in \cite{AntPerThi18TensorProdCu}, \cite{Rob13Cone} and \cite{AntPerRobThi21Edwards} respectively that $\Cu (A)$ also satisfies the following properties:
\begin{enumerate}
\item[\axiomO{5}]
Given $x',x,y',y,z\in S$ such that $x'\ll x$, $y'\ll y$ and $x+y\leq z$, there exists $c\in S$ with $x'+c\leq z\leq x+c$ and $y'\ll c$.  
\item[\axiomO{6}] 
Given $x',x,y,z\in S$ such that $x'\ll x\leq y+z$, there exist elements $e,f$ such that $e\leq x,y$, $f\leq x,z$ and $x'\leq e+f$.
\item[\axiomO{7}] 
Given $x',x,y',y,z\in S$ such that $x'\ll x\leq z$ and $y'\ll y\leq z$, there exists $w\in S$ satisfying $x',y'\ll w\leq z,x+y$.
\end{enumerate}

We will often use the equivalent formulation of \axiomO{6} that for $x'\ll x \leq y_1+\ldots+y_n$ delivers $e_1,\ldots,e_n$ such that $x'\leq e_1+\ldots+e_n$, and such that $e_j\leq x,y_j$ for all~$j$.
Similarly, we will also use an equivalent formulation of \axiomO{7} where for given $x_j' \ll x_j \leq z$ for $j=1,\ldots,n$ there exists $w \in S$ satisfying $x_j'\ll w \leq z,x_1+\ldots+x_n$.

Recently, an additional property that the Cuntz semigroup of every \ca{} satisfies, termed \axiomO{8}, has been uncovered;
see \cite[Theorem~7.4]{ThiVil21arX:NowhereScattered}.
\end{pgr}

\begin{dfn}
\label{dfn:soft}
Let $S$ be a \CuSgp, and let $x\in S$.
We say that $x$ is \emph{strongly soft} if for every $x'\in S$ satisfying $x'\ll x$ there exists $t\in S$ such that
\[
x'+t\ll x, \andSep x'\ll\infty t.
\]

We say that $x$ is \emph{weakly soft} if for every $x'\in S$ satisfying $x'\ll x$ there exists $n\geq 1$ and $t_1,\ldots,t_n\in S$ such that
\[
x'+t_j\ll x \quad \text{ for } j=1,\ldots,n, \andSep x'\ll t_1+\ldots+t_n.
\]

We say that $x$ is \emph{functionally soft} if for every $x'\in S$ satisfying $x'\ll x$ there exists $n\geq 1$ such that
\[
(n+1)x' \ll nx.
\]
\end{dfn}

\begin{rmk}
In \cite[Definition~5.3.1]{AntPerThi18TensorProdCu}, an element $x$ in a \CuSgp{} $S$ is defined to be `soft' if for every $x'\in S$ satisfying $x'\ll x$ there exists $n\in\NN$ such that $(n+1)x'\leq nx$.
Note that $x$ is `soft' in this sense if and only if $x$ is functionally soft in the sense of \cref{dfn:soft}. 
Indeed, the backward implication is clear.
Conversely, if $x$ is `soft' and $x'\ll x$, then choose $x''$ satisfying $x'\ll x''\ll x$.
Applying the definition we get $n$ such that $(n+1)x''\leq nx$, and thus $(n+1)x'\ll nx$.
\end{rmk}

The notions from \cref{dfn:soft} are closely related to the concept of \emph{pure noncompactness} introduced in \cite{EllRobSan11Cone}, and its generalization \emph{weak pure noncompactness} from \cite{AntPerThi18TensorProdCu}.
After recalling the definitions, we will see how these notions are related.
We refer to \cite[Section~5.1]{AntPerThi18TensorProdCu} for details on ideals and quotients of \CuSgp{s}.

\begin{dfn}
\label{dfn:pnc}
Let $S$ be a \CuSgp, and let $x\in S$.
One says that $x$ is \emph{purely noncompact} if for every ideal $I \subseteq S$ such that the image $x_I$ of $x$ in the quotient $S/I$ is compact, we have $2x_I = x_I$.

One says that $x$ is \emph{weakly purely noncompact} if for every ideal $I \subseteq S$ such that~$x_I$ is compact, there exists $n \in \NN$ such that $(n+1)x_I = nx_I$.
\end{dfn}

We will say that a \CuSgp{} $S$ is \emph{stably finite} if for all $x,y\in S$ with $x+y\ll x$ we have $y=0$. 
Note that this is more restrictive than the definition in \cite{AntPerThi18TensorProdCu}, although both notions agree if $S$ is simple.
Further, we will say that $S$ is \emph{residually stably finite} if every quotient of $S$ is stably finite.

A stably finite \ca{} has a stably finite Cuntz semigroup, and a residually stably finite \ca{} has a residually stably finite Cuntz semigroup.

\begin{prp}
\label{prp:soft}
Let $S$ be a \CuSgp, and let $x\in S$.
If $x$ is strongly soft, then $x$ is weakly soft, which in turn implies that $x$ is functionally soft and purely noncompact.
Further, $x$ is weakly purely noncompact whenever it is purely noncompact or functionally soft.

If $S$ satisfies \axiomO{5}, then $x$ is functionally soft if and only if $x$ is weakly purely noncompact.
If $S$ satisfies \axiomO{5} and is residually stably finite, then all notions from \cref{dfn:soft,dfn:pnc} are equivalent.

The implications are shown in the following diagram:
\[
\xymatrix@R-10pt{
\text{$x$ is strongly soft} \ar@{=>}[d] \\
\text{$x$ is weakly soft} \ar@{=>}[d] \ar@{=>}[r]
& \text{$x$ is purely noncompact} \ar@{=>}[d]  
\\
\text{$x$ is functionally soft} \ar@{=>}[r] \ar@/^4pc/@{==>}[uu]^{\axiomO{5},RSF}
& \text{$x$ is weakly purely noncompact} \ar@/^1pc/@{==>}[l]^{\axiomO{5}}
}
\]
\end{prp}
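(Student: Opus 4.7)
My plan is to work through the implications of the diagram in order of difficulty, with the two conditional implications requiring \axiomO{5} and residual stable finiteness being the substantive steps.

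\textbf{Unconditional implications.} Strong softness implies weak softness: given $x' \ll x$, interpolate $x' \ll x'' \ll x$ and apply strong softness to $x''$ to obtain $t$ with $x'' + t \ll x$ and $x'' \ll \infty t$; since $\infty t = \sup_n nt$, there is $n$ with $x'' \leq nt$, and setting $t_1 = \cdots = t_n := t$ witnesses weak softness via $x' + t_j \leq x'' + t \ll x$ and $x' \ll x'' \leq \sum_j t_j$. Weak softness implies functional softness by summing the inequalities $x' + t_j \ll x$ via \axiomO{3} to obtain $nx' + \sum_j t_j \ll nx$, then using $x' \leq \sum_j t_j$ to conclude $(n+1)x' \ll nx$. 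Weak softness implies pure noncompactness: compactness of $x_I$ lifts to some $x' \ll x$ with $x'_I = x_I$ (using \axiomO{2}), and projecting the soft decomposition yields $x_I + (t_j)_I \leq x_I$ and $x_I \leq \sum_j (t_j)_I$, which telescopes to $2 x_I \leq x_I$. Functional softness implies weak pure noncompactness by the same lift-and-project, producing $(n+1) x_I = n x_I$. Finally, pure noncompactness trivially implies weak pure noncompactness.

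\textbf{Weak pure noncompactness implies functional softness under \axiomO{5}.} This is the main obstacle. Arguing contrapositively, suppose $x$ is not functional and fix $x' \ll x$ with $(n+1) x' \not\ll nx$ for every $n$. The strategy is to construct an ideal $I \subseteq S$ such that $x_I$ is compact in $S/I$ while $(n+1) x'_I \neq n x'_I$ for all $n$, contradicting weak pure noncompactness of $x_I$. The ideal is built by a Zorn-type argument among ideals that preserve the failure witnessed by $x'$, and \axiomO{5} supplies the interpolation needed for closure of this class under suprema of chains and for the persistence of non-cancellation in the quotient. I expect the details to parallel the treatment of functional softness and pure noncompactness in \cite[Section~5.3]{AntPerThi18TensorProdCu}.

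\textbf{Functional softness implies strong softness under \axiomO{5} and residual stable finiteness.} Given $x' \ll x$, pick interpolants $x' \ll x'' \ll x''' \ll x$ and apply functional softness to $x'''$ to get $(n+1) x''' \ll nx$. Then \axiomO{5} applied to $x'' + n x'' \leq nx$ (with $x' \ll x''$ and $n x' \ll n x''$) produces $c \in S$ satisfying $x' + c \leq nx \leq x'' + c$ and $n x' \ll c$. The candidate $t$ for strong softness is extracted from $c$ by using \axiomO{6} to distribute $c \leq nx$ across the $n$ copies of $x$ and \axiomO{7} to recombine the pieces into a single $t$ with $x' + t \ll x$; residual stable finiteness is essential for the cancellation required to verify both the orthogonality $x' + t \ll x$ and the fullness condition $x' \ll \infty t$ arising from $n x' \ll c$. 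Combined with the preceding implications, this closes the loop and yields the equivalence of all five notions under \axiomO{5} and residual stable finiteness.
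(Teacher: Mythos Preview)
Your unconditional implications are fine and match the paper's arguments. The step ``weak pure noncompactness $\Rightarrow$ functional softness under \axiomO{5}'' is also just a citation in the paper to \cite[Proposition~5.3.5]{AntPerThi18TensorProdCu}, so your vague outline there is not a real problem.

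The genuine gap is in your final step, ``functional softness $\Rightarrow$ strong softness under \axiomO{5} and residual stable finiteness.'' You apply \axiomO{5} to $(n+1)x'' \leq nx$ to obtain $c$ with $x' + c \leq nx \leq x'' + c$, and then propose to extract the desired $t$ (with $x' + t \ll x$) from $c$ using \axiomO{6} and \axiomO{7}. But the hypothesis of the proposition is only \axiomO{5}; \axiomO{6} and \axiomO{7} are not assumed, so you cannot invoke them. Moreover, even granting them, it is unclear how distributing $c$ across the $n$ summands of $nx$ and recombining would yield a single $t$ satisfying both $x' + t \ll x$ and $x' \ll \infty t$; you have not indicated where residual stable finiteness actually enters this construction beyond a vague appeal to ``cancellation.''

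The paper's argument avoids all of this by applying \axiomO{5} in a different place: to $y' \ll y \leq x$ (one copy of $x$, not $n$), obtaining $t$ with $y' + t \leq x \leq y + t$ directly. The remaining issue is then to show $x \leq \infty t$, and this is where residual stable finiteness is used: passing to the quotient by the ideal generated by $t$, one has $\pi(x) \leq \pi(y)$, whence $(n+1)\pi(x) \leq (n+1)\pi(y) \ll n\pi(x)$, and stable finiteness of the quotient forces $\pi(x) = 0$. This is both simpler and uses only the stated hypotheses.
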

\begin{proof}
It is easy to see that strong softness implies weak softness, and that pure noncompactness implies weak pure noncompactness.

To show that weak softness implies functional softness, assume that $x$ is weakly soft, and let $x'\in S$ satisfy $x'\ll x$.
We obtain $n\geq 1$ and $t_1,\ldots,t_n\in S$ such that
\[
x'+t_j\ll x \quad \text{ for } j=1,\ldots,n, \andSep x'\ll t_1+\ldots+t_n.
\]

Then
\[
(n+1)x' \leq nx' + t_1+\ldots+t_n \ll nx.
\]

To show that weak softness implies pure noncompactness, assume that $x$ is weakly soft, and let $I \subseteq S$ be an ideal such that $\pi(x) \ll \pi(x)$, where $\pi \colon S \to S/I$ denotes the quotient map.
Using that $\pi$ preserves suprema of increasing sequences, we obtain $x' \in S$ such that $x' \ll x$ and $\pi(x) \leq \pi(x')$, which then implies $\pi(x)=\pi(x')$.
By \cref{dfn:soft}, we obtain $n\geq 1$ and $t_1,\ldots,t_n\in S$ such that
\[
x'+t_j\ll x \quad \text{ for } j=1,\ldots,n, \andSep x'\ll t_1+\ldots+t_n.
\]

For each $j$, we get $\pi(x)+\pi(t_j)=\pi(x)$, and therefore
\[
2\pi(x) 
= \pi(x)+\pi(x')
\leq \pi(x) + \pi(t_1) + \ldots + \pi(t_n)
= \pi(x),
\]
as required.

It was shown in \cite[Proposition~5.3.5]{AntPerThi18TensorProdCu} that functional softness implies weak pure noncompactness and that the converse implication holds if $S$ satisfies \axiomO{5}.

Lastly, assume that $S$ satisfies \axiomO{5} and is residually stably finite, and that $x$ is functionally soft.
In this case, it was shown in \cite[Lemma~5.3.8]{AntPerThi18TensorProdCu} that~$x$ is strongly soft.
Note that a stronger notion of stable finiteness is used in \cite{AntPerThi18TensorProdCu}, but for the argument in \cite[Lemma~5.3.8]{AntPerThi18TensorProdCu} the weaker notion defined above suffices.
For convenience, we include the short argument.

To verify that $x$ is strongly soft, let $x' \in S$ satisfy $x' \ll x$.
Choose $y',y \in S$ such that $x' \ll y' \ll y \ll x$.

Since $x$ is functionally soft, we obtain $n\in\NN$ such that $(n+1)y \ll nx$.
Applying \axiomO{5} for $y' \ll y \leq x$, we obtain $t\in S$ such that
\[
y'+t \leq x \leq y+t.
\]

Let $I := \{ s\in S : s \leq \infty t \}$ be the ideal generated by $t$, and let $\pi \colon S \to S/I$ denote the quotient map.
Then $\pi(x)\leq\pi(y)$, and therefore
\[
n\pi(x)+\pi(x)
=(n+1)\pi(x)
\leq (n+1)\pi(y)
\ll n\pi(x).
\]

Using that $S/I$ is stably finite, we get $\pi(x)=0$ and, consequently, $x\leq\infty t$.
Using that $x' \ll x \leq \infty t$, we can choose $t' \in S$ such that $t' \ll t$ and $x' \ll \infty t'$.
Since $x' \ll y'$ and $t' \ll t$, we have $x'+t' \ll y'+t \leq x$, which shows that $t'$ has the desired properties.
\end{proof}

Recall that a \CuSgp{} is said to satisfy \emph{weak cancellation} if for all elements $x,y,z$ with $x+z \ll y+z$ we have $x \ll y$.
It is easy to check that weak cancellation passes to quotients and implies stably finiteness.
Hence, every weakly cancellative \CuSgp{} is residually stably finite.

It was shown in \cite[Theorem~4.3]{RorWin10ZRevisited} that the Cuntz semigroups of stable rank one \ca{s} always satisfy weak cancellation.

\begin{cor}
Let $A$ be a residually stably finite \ca{}.
(For example, a \ca{} with stable rank one.)
Then for elements in $\Cu(A)$, all notions from \cref{dfn:soft,dfn:pnc} are equivalent.
\end{cor}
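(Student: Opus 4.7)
The plan is to invoke \cref{prp:soft} directly. That proposition asserts the equivalence of all five notions in \cref{dfn:soft,dfn:pnc} provided the ambient \CuSgp{} satisfies \axiomO{5} and is residually stably finite. I would therefore verify that both of these hypotheses hold automatically for $\Cu(A)$ under the assumption of the corollary.

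First, as recorded in \cref{pgr:CuSgps}, the Cuntz semigroup of any \ca{} satisfies \axiomO{5}. Second, the paragraph immediately preceding \cref{prp:soft} observes that residual stable finiteness passes from $A$ to $\Cu(A)$: indeed, every quotient of $\Cu(A)$ by a closed ideal of the form $\Cu(I)$ is of the form $\Cu(A/I)$ (identifying ideals of $\Cu(A)$ with closed ideals of $A$), and stable finiteness of $A/I$ implies stable finiteness of $\Cu(A/I)$. Combining these two facts, \cref{prp:soft} applies verbatim to yield the equivalence of strong softness, weak softness, functional softness, pure noncompactness and weak pure noncompactness for every $x \in \Cu(A)$.

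For the parenthetical example, I would justify that every stable rank one \ca{} is residually stably finite. The simplest route, consistent with the Cuntz-semigroup perspective, is to use that by \cite[Theorem~4.3]{RorWin10ZRevisited} the Cuntz semigroup of a stable rank one \ca{} is weakly cancellative. As noted just before this corollary, weak cancellation passes to quotients and implies stable finiteness, so $\Cu(A)$ is itself residually stably finite, which is all that is genuinely needed to invoke \cref{prp:soft}. (Alternatively one may appeal to Rieffel's result that stable rank one passes to quotients, together with the fact that stable rank one implies stable finiteness at the \ca{ic} level.)

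There is no substantive obstacle here; the corollary is a direct specialization of the preceding proposition, and the only content lies in checking that the structural hypotheses \axiomO{5} and residual stable finiteness are inherited by $\Cu(A)$ from the hypotheses on $A$.
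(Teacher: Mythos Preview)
Your proposal is correct and matches the paper's approach exactly: the corollary is stated without proof in the paper, as it follows immediately from \cref{prp:soft} together with the observations (recorded just before the corollary) that $\Cu(A)$ satisfies \axiomO{5} and that residual stable finiteness of $A$ implies residual stable finiteness of $\Cu(A)$. Your justification of the stable rank one example via weak cancellation is precisely the content of the paragraph preceding the corollary.
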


\begin{prp}
\label{prp:CompactSoft}
Let $S$ be a \CuSgp{}, and let $x \in S$ be compact.
Then $x$ is strongly soft if and only if $x$ is weakly soft, if and only if $x$ is purely noncompact, if and only if $2x=x$.

Further, $x$ is functionally soft if and only if $x$ is weakly purely noncompact, if and only if there exists $n\in\NN$ such that $(n+1)x=nx$.
\end{prp}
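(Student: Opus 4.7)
The plan is to leverage the compactness $x \ll x$ to collapse the implications of \cref{prp:soft} into equivalences in this special case. From \cref{prp:soft} I already have the chain of implications strongly soft $\Rightarrow$ weakly soft $\Rightarrow$ purely noncompact, and separately functionally soft $\Rightarrow$ weakly purely noncompact, so in each of the two chains only two additional implications remain to be verified.

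For the first chain, the plan is to apply pure noncompactness with $I := \{0\}$; since $x$ itself is compact, this immediately yields $2x = x$. To close the loop, I would show $2x = x \Rightarrow$ strongly soft by producing an explicit witness: given $x' \ll x$, set $t := x$. Then $x' + t = x' + x \leq 2x = x$, and compactness of $x$ upgrades this to $x' + t \ll x$. For the second condition of \cref{dfn:soft}, I combine $x' \ll x$ with $x \leq \infty x = \infty t$ using the standard fact that $\ll$ is upward-closed in its second argument.

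For the second chain, the plan is entirely analogous. Applying weak pure noncompactness with $I := \{0\}$ to the compact element $x$ yields some $n \in \NN$ with $(n+1)x = nx$. Conversely, given such $n$ and any $x' \ll x$, I would apply \axiomO{3} repeatedly $n+1$ times, using $x \ll x$, to get $(n+1)x' \ll (n+1)x = nx$, which is exactly the definition of functional softness applied to~$x'$.

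There is no genuine obstacle here: once one observes that compactness makes $I = \{0\}$ an admissible ideal in the (weak) pure noncompactness definitions, and that $t := x$ is the obvious witness for strong softness when $2x = x$, the arguments are immediate. The only subtlety worth explicit mention is the elementary monotonicity $x' \ll x \leq y \Rightarrow x' \ll y$ needed in the first chain.
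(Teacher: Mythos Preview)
Your proposal is correct and follows essentially the same approach as the paper: use \cref{prp:soft} for the forward implications, apply the (weak) pure noncompactness definition with $I=\{0\}$ to extract $2x=x$ (respectively $(n+1)x=nx$), and close each cycle by exhibiting $t:=x$ as the witness for strong softness (respectively by applying \axiomO{3} for functional softness). The paper merely asserts that these last steps are ``easily seen,'' so your write-up is in fact more explicit than the original.
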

\begin{proof}
If $2x=x$, then $x$ is easily seen to be strongly soft.
In general, strong softness implies weak softness, which in turn implies pure noncompactness, by \cref{prp:soft}.
Further, if $x$ is purely noncompact and compact, then $2x=x$ by definition.

Similarly, if $(n+1)x=nx$ for some $n$, then $x$ is easily seen to be functionally soft, which in turn implies that $x$ is weakly purely noncompact by \cref{prp:soft}.
Further, if $x$ is weakly purely noncompact and compact, then $(n+1)x=nx$ for some $n$ by definition.
\end{proof}

\begin{exa}
\label{exa:elementary}
Given $k\geq 1$, let $E_k=\{0,1,2,\ldots,k,\infty\}$, where the order is algebraic and the sum of two elements $x,y$ is  $x+y$ if $x+y\leq k$ or $\infty$ otherwise.
Then $E_k$ is a \CuSgp{} that satisfies \axiomO{5};
see \cite[Paragraph~5.1.16]{AntPerThi18TensorProdCu} for more details.

The element $x:=1$ is compact and satisfies $(k+2)x=\infty=(k+1)x$, but not $2x=x$.
Therefore, $x$ is functionally soft and weakly purely noncompact, but it is neither strongly soft, nor weakly soft, nor purely noncompact.
\end{exa}

\begin{exa}
\label{exa:Roerdam}
R{\o}rdam showed in \cite{Ror03FinInfProj} that there exist unital \ca{s} $A$ such that the unit of $A$ is finite, while the unit of $M_2(A)$ is properly infinite.
This implies that in $\Cu(A)$ the class $[1_A]$ of the unit of $A$ is a compact element satisfying $3[1_A] = 2[1_A] \neq [1_A]$.
By \cref{prp:CompactSoft}, $[1_A]$ is functionally soft and weakly purely noncompact, but it is neither strongly soft, nor weakly soft, nor purely noncompact.
\end{exa}

\cref{exa:Roerdam} shows that the lower downwards implications in \cref{prp:soft} cannot be reversed for elements in Cuntz semigroups.
The situation for the other implications is unclear.

\begin{qst}
For elements in the Cuntz semigroup of a \ca, does weak softness imply strong softness?
Does pure noncompactness imply weak softness?
\end{qst}



\begin{exa}
Let $S$ be a \CuSgp.
An element $x \in S$ is \emph{idempotent} if $2x=x$.
One says that $S$ is \emph{idempotent} if each of its elements is.
It is easy to see that idempotent elements are strongly soft.
Thus, every element in an idempotent \CuSgp{} is strongly soft.

Given a \ca{} $A$, it was noted in \cite[Section~7.2]{AntPerThi18TensorProdCu} that $\Cu(A)$ is idempotent if (and only if) $A$ is purely infinite.
Thus, every element in the Cuntz semigroup of a purely infinite \ca{s} is strongly soft.

A \ca{} $A$ is said to be \emph{weakly purely infinite} if there exists $n\in\NN$ such that $(n+1)x=nx$ for every $x\in\Cu(A)$. 
Such elements are functionally soft.
Thus, Cuntz semigroups of weakly purely infinite \ca{s} satisfy that every element is functionally soft.

It is an open problem if every weakly purely infinite \ca{} is purely infinite;
see \cite[Question~9.5]{KirRor02InfNonSimpleCalgAbsOInfty}.
\end{exa}

The next result provides a method of building strongly soft elements in \CuSgp{s}. 

\begin{lma}
\label{prp:ExaSoft}
Let $S$ be a \CuSgp{}, and let $(y_n)_n$ be a sequence in $S$ such that $y_n\leq \infty y_{n+1}$ for each $n\geq 0$. 
Then the element $\sum_{n=0}^\infty y_n$ is strongly soft.
\end{lma}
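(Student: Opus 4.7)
The plan is to verify strong softness directly from the definition. Set $s_N := \sum_{n=0}^N y_n$ and $x := \sup_N s_N$. Given $x' \ll x$, I need to produce $t \in S$ with $x' + t \ll x$ and $x' \ll \infty t$. The natural candidate is a small piece of $y_{N+1}$ for an appropriate $N$, suitably calibrated.

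First, I would interpolate $x' \ll x'' \ll x$ using (O2). Since $x = \sup_N s_N$, fix $N$ with $x'' \leq s_N$. Applying $\infty$ to $y_n \leq \infty y_{n+1}$ yields $\infty y_n \leq \infty y_{n+1}$, so $y_n \leq \infty y_{N+1}$ for every $n \leq N$, and hence $x'' \leq s_N \leq \infty y_{N+1}$. This already suggests using $t$ close to $y_{N+1}$ in order to force $x' \ll \infty t$; the difficulty is to simultaneously achieve $x' + t \ll x$, which requires $t$ to be strictly way-below $y_{N+1}$.

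To reconcile these demands, I would use (O2) to write $y_{N+1} = \sup_m u_m$ with $u_m \ll u_{m+1}$. By (O4), $\infty y_{N+1} = \sup_k k y_{N+1} = \sup_{k,m} k u_m$, which along the diagonal becomes the increasing supremum $\infty y_{N+1} = \sup_k k u_k$. Since $x' \ll x'' \leq \sup_k k u_k$, there is some $k_0$ with $x' \leq k_0 u_{k_0}$. I set $t := u_{k_0+1}$. Then $u_{k_0} \ll t$ together with (O3) gives $x' \leq k_0 u_{k_0} \ll k_0 t \leq \infty t$, so $x' \ll \infty t$. On the other hand, $t \ll y_{N+1}$ and $x' \ll x''$ combine via (O3) to yield $x' + t \ll x'' + y_{N+1} \leq s_N + y_{N+1} = s_{N+1} \leq x$, as required.

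The main obstacle is the tension isolated in the previous paragraph: simply taking $t = y_{N+1}$ gives $x' \leq \infty t$ but not the required way-below relation, while an arbitrary cut-down $t \ll y_{N+1}$ may collapse $\infty t$ so that $x'$ no longer fits underneath it. The diagonal reindexing supplied by (O2) and (O4) is precisely what picks out a $t$ threading this needle, and once it is found the remaining verifications are routine applications of (O3) and the fact that $a \ll b \leq c$ implies $a \ll c$.
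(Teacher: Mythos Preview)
Your proof is correct and follows essentially the same approach as the paper: both reduce to finding some $t \ll y_{N+1}$ with $x' \ll \infty t$, then use (O3) to conclude $x' + t \ll s_{N+1} \leq x$. The only difference is that the paper simply asserts such a $t$ exists (from $y' \ll \infty y_{m+1}$), whereas you spell out the construction via the $\ll$-increasing approximation $(u_m)_m$ of $y_{N+1}$ and the diagonal supremum $\infty y_{N+1} = \sup_k k u_k$.
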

\begin{proof}
To verify that $y:=\sum_{n=0}^\infty y_n$ is strongly soft, let $y' \in S$ be such that $y'\ll y$. 
It follows that there exists some $m\in\NN$ such that $y'\ll \sum_{n=0}^m y_n$.
 
Using that $y_n\leq \infty y_{n+1}$ for each $n$, we obtain
\[
\sum_{n=0}^m y_n
\leq \infty y_1 + \sum_{n=1}^m y_n
\leq \infty y_2 + \sum_{n=2}^m y_n
\leq \ldots
\leq \infty y_m
\leq \infty y_{m+1}
\]
and, therefore, $y'\ll \infty y_{m+1}$. 

Let $t\in S$ be such that $t\ll y_{m+1}$ and $y'\ll \infty t$. 
Then,
\[
y'+t 
\ll \left(\sum_{n=0}^m y_n\right) + y_{m+1} 
\leq y, \andSep
y' \ll \infty t,
\]
as desired.
\end{proof}

\begin{prp}
\label{prp:CharStrongSoft}
Let $S$ be a \CuSgp, and let $x \in S$.
Then the following are equivalent:
\begin{enumerate}
\item
The element $x$ is strongly soft.
\item 
For every $x'\in S$ satisfying $x'\ll x$ there exists a strongly soft element $t\in S$ such that
\[
x'+t\leq x \leq\infty t.
\]
\item
For every $x'\in S$ satisfying $x'\ll x$ there exists $t\in S$ such that
\[
x'+t\leq x, \andSep x'\leq\infty t.
\]
\end{enumerate}
\end{prp}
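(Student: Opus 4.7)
The plan is to prove the cycle $(2)\Rightarrow(3)\Rightarrow(1)\Rightarrow(2)$. The implication $(2)\Rightarrow(3)$ is immediate: just drop the strong softness requirement on $t$ and use $x'\leq x\leq\infty t$ to get $x'\leq\infty t$.

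For $(3)\Rightarrow(1)$, given $x'\ll x$, I would interpolate twice via \axiomO{2} to obtain $y,z$ with $x'\ll y\ll z\ll x$, and then apply $(3)$ at $z$ to produce $t\in S$ with $z+t\leq x$ and $z\leq\infty t$. Writing $t=\sup_k t_k$ with $t_k\ll t_{k+1}$, the identity $\infty t=\sup_j jt_j$ combined with $y\ll\infty t$ gives $y\leq jt_j$ for some $j$. Setting $s:=t_j$, I obtain $x'\ll y\leq js\leq\infty s$, which chains to $x'\ll\infty s$; and $x'+s\leq y+s\ll z+t\leq x$ by \axiomO{3}, so $x'+s\ll x$. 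Thus $s$ witnesses strong softness of $x$ for the given $x'$.

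The substance lies in $(1)\Rightarrow(2)$. Given $x'\ll x$, I would use \axiomO{2} to fix a $\ll$-increasing sequence $(b_n)_n$ with supremum $x$ and $x'\leq b_0$. My strategy is to construct pieces $t_n\in S$ inductively such that, writing $\sigma_n:=t_0+\ldots+t_n$, the following hold for every $n$:
\begin{enumerate}
\item[(a)] $x'+\sigma_n\ll x$;
\item[(b)] $t_{n-1}\leq\infty t_n$ whenever $n\geq 1$;
\item[(c)] $b_n\leq\infty t_n$.
\end{enumerate}
Setting $t:=\sup_n\sigma_n$, condition (a) together with joint continuity of addition yields $x'+t\leq x$; condition (b) together with \cref{prp:ExaSoft} yields that $t$ is strongly soft; and condition (c) together with $t_n\leq t$ gives $x=\sup_n b_n\leq\infty t_n\leq\infty t$, which completes~$(2)$.

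To carry out the induction I would rely on the standard auxiliary fact that for any $a,b\ll x$ there exists $w$ with $a,b\ll w\ll x$, which follows from \axiomO{2} by picking a sufficiently late term of a $\ll$-chain to $x$ that dominates both $a$ and $b$. At step $n$, knowing $x'+\sigma_{n-1}\ll x$ (from the previous iteration of (a), with $\sigma_{-1}:=0$) and $b_n\ll x$, I apply the auxiliary fact to obtain $w_n\ll x$ with $x'+\sigma_{n-1},b_n\ll w_n$, and then invoke strong softness of $x$ at $w_n$ to produce $t_n$ with $w_n+t_n\ll x$ and $w_n\leq\infty t_n$. Then (a) follows from $x'+\sigma_n\leq w_n+t_n\ll x$; (b) from $t_{n-1}\leq\sigma_{n-1}\leq w_n\leq\infty t_n$; and (c) from $b_n\leq w_n\leq\infty t_n$. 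The main obstacle is that merely iterating strong softness (taking $t_n$ from strong softness applied directly at $x'+\sigma_{n-1}$) only forces $x'\leq\infty t$, not $x\leq\infty t$. The key trick is thus to bundle $x'+\sigma_{n-1}$ and $b_n$ into the single element $w_n$ before invoking strong softness, so that one application of strong softness simultaneously yields all three invariants.
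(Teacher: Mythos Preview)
Your proof is correct and follows essentially the same approach as the paper's. For $(1)\Rightarrow(2)$ both arguments fix a $\ll$-increasing sequence to $x$, inductively apply strong softness to produce pieces $t_n$ whose sum $t$ satisfies $x'+t\leq x\leq\infty t$, and invoke \cref{prp:ExaSoft} for strong softness of $t$; your elements $w_n$ play the role of the paper's $y_{n+1}$. For $(3)\Rightarrow(1)$ both arguments interpolate below $x$, apply~(3), and pass to a way-below approximant of $t$; the paper does this in one interpolation step plus an implicit choice of $t'\ll t$, while you make the approximant explicit via a $\ll$-chain for $t$, which is a harmless variation.
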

\begin{proof}
It is clear that~(2) implies~(3).
To show that~(1) implies~(2), assume that~$x$ is strongly soft. 
Choose a $\ll$-increasing sequence $(x_n)_{n}$ with supremum $x$ and such that $x_0=x'$.
We inductively find $y_n$ and $t_n$ such that
\[
y_n+t_n\ll y_{n+1}\ll x, \quad
y_n \ll \infty t_n, \andSep 
x_{n+1}\ll y_{n+1}
\]
for all $n\geq 0$.
We start by setting $y_0:=x_0$.
Since $x$ is strongly soft, we obtain $t_0\in S$ such that $y_0+t_0\ll x$ and $y_0\ll\infty t_0$.
Now let $n\geq 0$ and assume that we have chosen $y_k$ and $t_k$ for all $k\leq n$.
Then, using that $y_n+t_n\ll x$ and $x_{n+1}\ll x$, we choose $y_{n+1}\ll x$ such that
\[
y_n+t_n\ll y_{n+1}, \andSep x_{n+1}\ll y_{n+1}.
\]

Since $x$ is strongly soft, we obtain $t_{n+1}\in S$ such that $y_{n+1}+t_{n+1}\ll x$ and $y_{n+1}\ll\infty t_{n+1}$, which finishes the induction argument.

For each $n\geq 0$, we have
\[
x_0+\sum_{k=0}^n t_k
= y_0+t_0+t_1+\ldots+t_n
\leq y_1+t_1+\ldots+t_n
\leq \ldots \leq y_{n+1}
\leq x.
\]

Setting $t:=\sum_{k=0}^\infty t_k$, it is clear that $x'+t\leq x\leq \infty t$. Further, it follows from \cref{prp:ExaSoft} that $t$ is strongly soft.

To show that~(3) implies~(1), assume that $x$ satisfies~(3), and let $x' \in S$ satisfy $x' \ll x$.
Choose $x'' \in S$ such that $x' \ll x'' \ll x$.
Applying~(3) to $x'' \ll x$, we obtain $t \in S$ such that
\[
x''+t \leq x, \andSep x'' \leq \infty t.
\]

Using that $x' \ll x''$, we can choose $t' \in S$ such that
\[
x '\ll \infty t', \andSep
t' \ll t,
\]
which implies that $x'+t'\ll x$, as desired.
\end{proof}

The next result is the analog of \cite[Theorem~5.3.11]{AntPerThi18TensorProdCu} for strongly soft elements.

\begin{thm}
\label{thm:SoftSubSgp}
Let $S$ be a \CuSgp, and let $S_\soft$ denote the set of strongly soft elements in $S$.
Then:
\begin{enumerate}
\item
The set $S_\soft$ is a submonoid of $S$ that is closed under suprema of increasing sequences.
\item
$S_\soft$ is absorbing in the following sense:
if $x,y\in S$ satisfy $x\leq\infty y$ and $y$ is strongly soft, then so is $x+y$.
\end{enumerate}
\end{thm}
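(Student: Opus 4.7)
The plan is to work throughout with the equivalent characterizations of strong softness established in \cref{prp:CharStrongSoft}, using the $\leq$-version in~(3) to detect strong softness after decomposition, and reserving the enhanced form~(2) for the absorption argument, where the extra information $y\leq\infty t$ with $t$ itself strongly soft is essential.

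For~(1), I first note that $0$ is trivially strongly soft. To see that $S_\soft$ is closed under addition, let $x,y\in S_\soft$ and $z'\ll x+y$. Writing $x$ and $y$ as suprema of $\ll$-increasing sequences $(x_n)_n$, $(y_n)_n$ via \axiomO{2}, \axiomO{4} gives $x+y=\sup_n(x_n+y_n)$, so $z'\leq x_N+y_N$ for some $N$ with $x_N\ll x$ and $y_N\ll y$. Applying \cref{prp:CharStrongSoft}(3) to $x$ at $x_N$ and to $y$ at $y_N$ produces $r,s\in S$ with $x_N+r\leq x$, $x_N\leq\infty r$, $y_N+s\leq y$, $y_N\leq\infty s$. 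Then
\[
z'+(r+s)\leq x_N+y_N+r+s\leq x+y, \andSep
z'\leq x_N+y_N\leq \infty r+\infty s\leq \infty(r+s),
\]
so \cref{prp:CharStrongSoft}(3) certifies that $x+y\in S_\soft$. For closure under suprema of increasing sequences $x=\sup_n x_n$ with each $x_n\in S_\soft$, I proceed as follows: given $x'\ll x$, interpolate $x'\ll y\ll x$ via \axiomO{2}; by definition of $\ll$, $y\leq x_n$ for some $n$, whence $x'\ll x_n$, and a single application of~(3) inside $x_n$ produces a witness that also works inside $x$.

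For~(2), let $x\leq\infty y$ with $y\in S_\soft$, and let $z'\ll x+y$. As above I find $x_N\ll x$ and $y_N\ll y$ with $z'\leq x_N+y_N$. The crucial move is to apply the strengthened \cref{prp:CharStrongSoft}(2) to $y$ at $y_N$, producing a strongly soft $t$ with $y_N+t\leq y\leq\infty t$. Then on the one hand
\[
z'+t\leq x_N+(y_N+t)\leq x_N+y\leq x+y,
\]
and on the other
\[
z'\leq x+y\leq \infty y\leq \infty t,
\]
so \cref{prp:CharStrongSoft}(3) yields that $x+y$ is strongly soft.

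The main technical point is why characterization~(2) rather than merely~(3) is needed for~(2): the decomposition $z'\leq x_N+y_N$ gives control over $y_N$ but not over the full summand $x\leq\infty y$, and one requires a witness $t$ that majorises \emph{all} of $y$ (in the $\infty$-multiple sense) in order to absorb $x$ via $\infty y\leq\infty t$. Everything else reduces to routine applications of \axiomO{2}, \axiomO{4}, and the already-established equivalent formulations of strong softness.
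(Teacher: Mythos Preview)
Your proof is correct and follows essentially the same approach as the paper's, working through the equivalent characterizations in \cref{prp:CharStrongSoft}. The only notable difference is in part~(2): the paper decomposes $w\ll x+y$ as $w\ll x+y'$ with $y'\ll y$, leaving the $x$-summand whole, whereas you additionally split off $x_N\ll x$; this extra step is harmless but unnecessary, since the bound $z'\leq x+y\leq\infty y\leq\infty t$ already absorbs all of $x$ without ever needing $x_N$.
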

\begin{proof}
To show that $S_\soft$ is closed under addition, let $x,y\in S_\soft$.
To verify that $x+y$ is strongly soft, let $w\in S$ satisfy $w\ll x+y$.
Choose $x',y'\in S$ such that
\[
w\ll x'+y', \quad x'\ll x, \andSep y'\ll y.
\]

Since $x$ and $y$ are strongly soft, we obtain $r,s\in S$ such that
\[
x'+r\ll x, \quad
x'\ll\infty r, \quad
y'+s\ll y, \andSep
y'\ll\infty s.
\]

Then
\[
w+(r+s) \leq x'+y'+r+s \ll x+y, \andSep
w\leq x'+y' \ll \infty (r+s).
\]

Using that $0$ is strongly soft, it follows that $S_\soft$ is a submonoid.

To see that $S_\soft$ is closed under suprema of increasing sequences, let $(x_n)_n$ be an increasing sequence in $S_\soft$, and set $x:=\sup_nx_n$.
To verify that $x$ is strongly soft, let $x'\in S$ satisfy $x'\ll x$.
We obtain $n\in\NN$ such that $x'\ll x_{n}$.
Since $x_{n}$ is strongly soft, we get $r\in S$ such that
\[
x'+r\ll x_{n}, \andSep
x'\ll\infty r.
\]

Then $x'+r\ll x$ and $x'\ll\infty r$, as desired.

Let us now prove~(2). Thus, let $x,y\in S$ satisfy $x\leq\infty y$, and assume that $y$ is strongly soft.
We verify condition~(3) of \cref{prp:CharStrongSoft} for $x+y$. Let $w\in S$ satisfy $w\ll x+y$, and choose $y'\in S$ such that
\[
w\ll x+y', \andSep
y'\ll y.
\]

Since $y$ is strongly soft, we can apply \cref{prp:CharStrongSoft} to obtain $r\in S$ such that
\[
y'+r\leq y\leq\infty r.
\]

Using that $x\leq\infty y$ at the third step, we get
\[
w+r\leq x+y'+r\leq x+y 
\leq \infty y \leq \infty r,
\]
as desired.
\end{proof}

The next result is the $\Cu$-version of \cref{prp:OrderZeroMapPresSoft}.

\begin{lma}
\label{prp:GenCuMor}
Let $\varphi \colon S \to T$ be a generalized \CuMor{} between \CuSgp{s}, and let $x \in S$.
If $x$ is (functionally, weakly, strongly) soft, then so is $\varphi(x)$.
\end{lma}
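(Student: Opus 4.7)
The plan is to handle the three softness notions in parallel via a common lifting scheme. The starting move, used in all three cases, is as follows: pick, via \axiomO{2}, a $\ll$-increasing sequence $(x_k)_k$ in $S$ with $\sup_k x_k = x$. Since $\varphi$ preserves suprema of increasing sequences, $\varphi(x)=\sup_k\varphi(x_k)$ in $T$. Given any $y'\ll\varphi(x)$, I then choose $k_0$ with $y'\leq\varphi(x_{k_0})$, and note $x_{k_0+1}\ll x$ (since $x_{k_0+1}\ll x_{k_0+2}\leq x$).

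For strong softness, the key observation is that \cref{prp:CharStrongSoft}(3) characterizes strong softness using only $\leq$, and therefore passes cleanly through $\varphi$. Applied to $x_{k_0+1}\ll x$ it produces $t\in S$ with $x_{k_0+1}+t\leq x$ and $x_{k_0+1}\leq\infty t$. Using that $\varphi$ preserves addition and suprema of increasing sequences (so $\varphi(\infty t)=\infty\varphi(t)$), the element $s:=\varphi(t)$ satisfies $y'+s\leq\varphi(x)$ and $y'\leq\infty s$, which by \cref{prp:CharStrongSoft} establishes strong softness of $\varphi(x)$.

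For functional softness the argument is direct. I pick an auxiliary $y'\ll y''\ll\varphi(x)$ via \axiomO{2} with $y''\leq\varphi(x_{k_0})$, apply functional softness of $x$ to $x_{k_0+1}$ to obtain $n\geq 1$ with $(n+1)x_{k_0+1}\ll nx$, and push through $\varphi$ to get $(n+1)\varphi(x_{k_0+1})\leq n\varphi(x)$. Iterating \axiomO{3} upgrades $y'\ll y''$ to $(n+1)y'\ll(n+1)y''$, so the chain
\[
(n+1)y'\ll(n+1)y''\leq(n+1)\varphi(x_{k_0+1})\leq n\varphi(x)
\]
yields $(n+1)y'\ll n\varphi(x)$.

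The main obstacle is weak softness, since both defining inequalities involve $\ll$ while $\varphi$ only preserves $\leq$; the difficulty is upgrading the $\leq$-data produced by $\varphi$ back to $\ll$-data required by the definition. I circumvent this with a three-layer refinement. Pick $y'\ll y''\ll y'''\ll\varphi(x)$ via \axiomO{2} with $y'''\leq\varphi(x_{k_0})$. Apply weak softness of $x$ to $x_{k_0+1}$ to obtain $t_1,\dots,t_n\in S$ with $x_{k_0+1}+t_j\ll x$ and $x_{k_0+1}\ll\sum_j t_j$; pushing through $\varphi$ yields $\varphi(x_{k_0+1})+\varphi(t_j)\leq\varphi(x)$ and $\varphi(x_{k_0+1})\leq\sum_j\varphi(t_j)$. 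Since $y''\ll y'''\leq\varphi(x_{k_0+1})$, we get $y''\ll\sum_j\varphi(t_j)$. Writing each $\varphi(t_j)=\sup_l s_j^{(l)}$ as a $\ll$-increasing supremum by \axiomO{2} and applying \axiomO{4} to $\sum_j\varphi(t_j)=\sup_l\sum_j s_j^{(l)}$, I find $l_0$ with $y''\leq\sum_j s_j^{(l_0)}$ and set $s_j:=s_j^{(l_0)}$. Then $y'\ll y''\leq\sum_j s_j$ gives $y'\ll\sum_j s_j$, while \axiomO{3} applied to $y'\ll y''$ and $s_j\ll\varphi(t_j)$ gives $y'+s_j\ll y''+\varphi(t_j)\leq\varphi(x)$, establishing weak softness of $\varphi(x)$.
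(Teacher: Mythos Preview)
Your proof is correct and follows essentially the same approach as the paper: lift $y'\ll\varphi(x)$ back to $S$ using that $\varphi$ preserves suprema of increasing sequences, apply the relevant softness hypothesis in $S$, and push the resulting witnesses through $\varphi$. The paper gives details only for the strongly soft case (also via \cref{prp:CharStrongSoft}(3)) and dismisses functional and weak softness with ``similarly''; you have filled in those details, and your three-layer refinement for weak softness is exactly the kind of care needed to recover $\ll$ from $\leq$ after applying a generalized \CuMor.
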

\begin{proof}
Set $y := \varphi(x)$.
Assuming that $x$ is strongly soft, we apply \cref{prp:CharStrongSoft}(3) to verify that $y$ is soft.
Let $y \in T$ satisfy $y' \ll y$.
Using that $\varphi$ preserves suprema of increasing sequences, we obtain $x' \in S$ such that $x' \ll x$ and $y' \leq \varphi(x')$.
By \cref{prp:CharStrongSoft}, we obtain $t \in S$ such that $x'+t \leq x \leq \infty t$.
Then
\[
y'+\varphi(t)
\leq \varphi(x')+\varphi(t)
= \varphi(x'+t)
\leq \varphi(x)
= y
\leq \varphi(\infty t)
= \infty \varphi(t),
\]
which shows that $\varphi(t)$ has the desired properties.

Similarly, one shows that functional and weak softness pass from $x$ to $y$.
\end{proof}

A projection $p$ in a \ca{} $A$ is said to be \emph{properly infinite} if $p\oplus p \precsim p\oplus 0$ in $M_2(A)$. 

Note that, whenever $A$ is residually stably finite, \cref{prp:SoftCaVsCu} below says that $a\in A_+$ is soft if and only if no Cuntz representative of $a+I$ is a nonzero projection; see \cite[Theorem~3.5]{BroCiu09IsoHilbModSF}. Thus, under these assumptions, one can understand softness as a generalization of pure positivity, as defined in \cite[Definition~2.1]{PerTom07Recasting}.

\begin{prp}
\label{prp:SoftCaVsCu}
Let $A$ be a \ca{}, and let $a\in A_+$.
Consider the following statements:
\begin{enumerate}
\item
The element $a$ is soft.
\item
The class $[a] \in \Cu (A)$ is strongly soft.
\item
For every closed ideal $I \subseteq A$, either $a \in I$, or the spectrum $\spec(a+I)$ has~$0$ as a limit point, or the characteristic function $\chi$ of $(0,\infty)$ is continuous on~$\spec(a)$ and the corresponding projection $p:=\chi(a)$ is properly infinite;
\end{enumerate}
Then the implications `(1)$\Rightarrow$(2)$\Rightarrow$(3)' hold.
If $A$ is residually stably finite, then the implication `(3)$\Rightarrow$(1)' holds and so~(1)-(3) are equivalent in this case.
\end{prp}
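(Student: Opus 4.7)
The plan is to handle the three implications separately, using the characterizations established earlier in the paper as building blocks, with \cref{prp:CharSoftElement}, \cref{prp:CharStrongSoft}, \cref{prp:GenCuMor}, and \cref{prp:CompactSoft} playing the central roles.

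For \textbf{(1)$\Rightarrow$(2)}, I would verify the reformulation of strong softness from \cref{prp:CharStrongSoft}(3). Given $x'\ll [a]$ in $\Cu(A)$, one can choose $\varepsilon>0$ small enough so that $x'\leq [(a-\varepsilon)_+]$. The candidate witness is $t:=[g_\varepsilon(a)]$. A direct pointwise check of the functions on $[0,\infty)$ shows $g_\varepsilon(t)+(t-\varepsilon)_+\leq t$, and since $g_\varepsilon(a)\perp (a-\varepsilon)_+$, this yields the Cuntz inequality $x'+t\leq [(a-\varepsilon)_+]+[g_\varepsilon(a)]\leq [a]$. For the relation $x'\leq \infty t$, I would invoke \cref{prp:CharSoftElement}(3), which since $a$ is soft gives $a\lhd g_\varepsilon(a)$, i.e., $[a]\leq\infty[g_\varepsilon(a)]=\infty t$, and hence $x'\leq\infty t$.

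For \textbf{(2)$\Rightarrow$(3)}, fix a closed ideal $I\subseteq A$ with $a\notin I$, and let $\pi\colon A\to A/I$ be the quotient map. By \cref{prp:GenCuMor}, strong softness of $[a]$ passes through the induced generalized \CuMor{} $\Cu(\pi)$, so $[\pi(a)]\in\Cu(A/I)$ is strongly soft. If $0$ is a limit point of $\spec(\pi(a))$, we are done. Otherwise $\chi$ is continuous on $\spec(\pi(a))$ and the projection $p:=\chi(\pi(a))$ generates the same hereditary subalgebra as $\pi(a)$, giving $[p]=[\pi(a)]$ in $\Cu(A/I)$. Since $[p]$ is compact and strongly soft, \cref{prp:CompactSoft} gives $2[p]=[p]$, which exactly says $p$ is properly infinite.

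For \textbf{(3)$\Rightarrow$(1)} under residual stable finiteness, I would verify the criterion in \cref{prp:CharSoftElement}(2). Given $I\subseteq A$ closed with $a\notin I$, apply (3). The first two alternatives directly match what \cref{prp:CharSoftElement}(2) demands, so I only need to rule out the third: a nonzero properly infinite projection $p\in A/I$. But residual stable finiteness means $A/I$ is stably finite, so $\Cu(A/I)$ is stably finite; since $p$ is a projection we have $[p]\ll[p]$, and proper infiniteness gives $[p]+[p]\leq[p]$, which together force $[p]=0$ and hence $p=0$, a contradiction since $a\notin I$.

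The steps are essentially a careful bookkeeping exercise, and I do not expect a serious obstacle. The subtlest point is the construction in (1)$\Rightarrow$(2): selecting the right witness $t$ so that both the additive bound $x'+t\leq[a]$ and the ideal-generation bound $x'\leq\infty t$ hold simultaneously. The function $g_\varepsilon$ from \cref{ntn:g-epsilon} was designed precisely so that it lies under $a$ after cutting, is orthogonal to $(a-\varepsilon)_+$, and generates the same closed ideal as $a$ once softness is assumed, making it the canonical choice.
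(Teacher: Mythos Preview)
Your proposal is correct and follows essentially the same route as the paper's proof: both arguments verify \cref{prp:CharStrongSoft}(3) for (1)$\Rightarrow$(2), push strong softness through the quotient via \cref{prp:GenCuMor} and invoke \cref{prp:CompactSoft} for (2)$\Rightarrow$(3), and rule out the properly infinite alternative for (3)$\Rightarrow$(1). The only cosmetic difference is that in (1)$\Rightarrow$(2) you pick the explicit witness $t=[g_\varepsilon(a)]$ from \cref{prp:CharSoftElement}(3), whereas the paper uses the generic orthogonal element $b$ from \cref{prp:CharSoftElement}(4); your pointwise estimate $g_\varepsilon+( \cdot-\varepsilon)_+\leq \mathrm{id}$ makes the additive bound $x'+t\leq[a]$ slightly more transparent.
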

\begin{proof}
To show that~(1) implies~(2), set $x := [a] \in \Cu(A)$, and let $x' \in \Cu(A)$ satisfy $x' \ll x$. 
Then, there exists $\varepsilon > 0$ satisfying $x' \leq [(a-\varepsilon )_+] \leq x$.
Using \cref{prp:CharSoftElement}, we obtain $b\in \overline{aAa}_+$, orthogonal to $(a-\varepsilon )_+$, such that $a\in \overline{\linSpan} AbA$. 
Setting $t:=[b]$ in $\Cu (A)$, one gets 
\[
x'+t \leq x \leq \infty t.
\]
Now it follows from \cref{prp:CharStrongSoft} that $x=[a]$ is strongly soft.

To show that~(2) implies~(3), let $I \subseteq A$ be a closed ideal.
Then $\Cu(I)$ is naturally an ideal in $\Cu(A)$ such that $\Cu(A/I)$ is canonically isomorphic to $\Cu(A)/\Cu(I)$;
see \cite{CiuRobSan10CuIdealsQuot} and \cite[Section~5.1]{AntPerThi18TensorProdCu}.
The class $[a+I] \in \Cu(A/I)$ corresponds to the image of $[a]$ under the quotient map $\Cu(A) \to \Cu(A)/\Cu(I)$.
By \cref{prp:GenCuMor}, $[a+I]$ is strongly soft.

Thus, without loss of generality, we may assume that $a$ is nonzero and $I = \{0\}$.
We may also assume that $0$ is isolated in the spectrum $\sigma(a)$, which implies that there exists $\varepsilon>0$ with $\sigma(a) \subseteq \{0\} \cup [\varepsilon,\infty)$.
This implies that $\chi$ is continuous on $\sigma(a)$.
Applying functional calculus, we obtain a projection $p := \chi(a)$, which satisfies $[p]=[a]$.
We have $[p] \ll [p]$, and therefore $[p]=2[p]$ by \cref{prp:CompactSoft}.
This implies that $p\oplus p$ is Cuntz subequivalent, and thus also Murray-von Neumann subequivalent, to $p$. 
Hence, $p$ is a properly infinite projection.

Finally, if $A$ is residually stably finite, then no quotient of $A$ contains a nonzero, properly infinite projection, which shows that~(3) implies~(1) by \cref{prp:CharSoftElement}.
\end{proof}

\begin{qst}
\label{qst:RealizeSoftClass}
Let $A$ be a stable \ca, and let $x \in \Cu(A)$ be strongly soft.
Does there exist a soft element $a \in A_+$ with $x = [a]$?
\end{qst}

\section{Abundance of soft elements}
\label{sec:AbSoftEl}

In this section, we study \ca{s} and \CuSgp{s} with an \emph{abundance of soft elements}, a notion introduced in \cref{dfn:AbSoft} below.
We have shown in \cref{prp:SoftCaVsCu} that soft elements in \ca{s} have a (strongly) soft Cuntz class, and this readily implies that if a stable \ca{} has an abundance of soft elements, then so does the associated Cuntz semigroup.
The main result of this section shows that the converse also holds; 
see \cref{prp:EquivAbundance}.

For nonstable \ca{s}, we have to consider scaled \CuSgp{s}.
A \emph{scale} in a \CuSgp{} $S$ is a subset $\Sigma \subseteq S$ that is closed under suprema of increasing sequences, that is downward-hereditary, and that generates~$S$ as an ideal;
see \cite[Definition~4.1]{AntPerThi20CuntzUltraproducts}.
In this case, the pair $(S,\Sigma)$ is called a \emph{scaled \CuSgp}.

Given a \ca{} $A$, the naturally associated scale $\Sigma_A \subseteq \Cu(A)$ is
\[
\Sigma_A = \big\{ x \in \Cu(A) : x \leq [a] \text{ for some } a \in A_+ \big\};
\]
see \cite[4.2]{AntPerThi20CuntzUltraproducts} and \cite[Lemma~3.3(2)]{ThiVil22arX:Glimm}.

\medskip

Given elements $a$ and $b$ in a \ca{}, recall that we write $a \lhd b$ if $a$ belongs to the closed ideal generated by $b$. 
We will use analogous notation in \CuSgp{s}.

\begin{ntn}
Given elements $x$ and $y$ in a \CuSgp{} $S$, we write $x \lhd y$ if $x$ belongs to the ideal of $S$ generated by $y$, that is, if $x \leq \infty y$.
\end{ntn}

\begin{dfn}
\label{dfn:AbSoft}
We say that a \ca{} $A$ has \emph{an abundance of soft elements} if for every $a\in A_+$ and every $\varepsilon >0$ there exists a positive, soft element $b\in\overline{aAa}$ such that $(a-\varepsilon )_+ \lhd b$.

We say that a scaled \CuSgp{} $(S,\Sigma)$ has \emph{an abundance of strongly soft elements} if for every $x',x \in \Sigma$ satisfying $x' \ll x$ there exists a strongly soft element $y \in S$ such that $x' \lhd y \leq x$.
\end{dfn}

\begin{rmk}
Let $S$ be a \CuSgp{}, which we consider as a scaled \CuSgp{} with trivial scale $\Sigma = S$.
In this case, \cref{dfn:AbSoft} says that $S$ has \emph{an abundance of strongly soft elements} if for every $x',x \in S$ satisfying $x' \ll x$ there exists a strongly soft element $y \in S$ such that $x' \lhd y \leq x$.
\end{rmk}

We start with basic properties of the relation $\lhd$ for \CuSgp{s}.

\begin{lma}
\label{prp:BasicLhdCu}
Let $S$ be a \CuSgp, and let $x',x,y \in S$ satisfy $x' \ll x \lhd y$.
Then:
\begin{enumerate}
\item
There exists $y' \in S$ such that $x' \lhd y' \ll y$.
\item
If $S$ satisfies \axiomO{6} and \axiomO{7}, then there exists $z \in S$ such that $z \leq y$ and $x' \lhd z \lhd x$.
\end{enumerate}
\end{lma}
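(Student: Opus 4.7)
For part (1), the plan is to unpack $x \lhd y$ as $x \leq \infty y$ and apply axiom \axiomO{2} to write $y = \sup_k y_k$ as the supremum of a $\ll$-increasing sequence. Using \axiomO{4} one verifies that $\infty y = \sup_k k y_k$, so the assumption $x' \ll x \leq \infty y$ produces some $k$ with $x' \leq k y_k \leq \infty y_k$. Setting $y' := y_k$ gives $x' \lhd y'$ and $y' \ll y$, as required. This step is essentially a routine supremum manipulation.

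For part (2), the plan is to combine the decomposition axioms \axiomO{6} and \axiomO{7} through repeated interpolation. First choose a chain $x' \ll x_0 \ll x_1 \ll x_2 \ll x$ by iterating \axiomO{2}; since $x_2 \ll x \leq \infty y = \sup_n n y$, some $n$ satisfies $x_2 \leq ny$. Applying the generalized form of \axiomO{6} to $x_1 \ll x_2 \leq y + \ldots + y$ ($n$ summands) produces $e_1, \ldots, e_n$ with $e_j \leq x_2$, $e_j \leq y$, and $x_1 \leq e_1 + \ldots + e_n$. A second application of \axiomO{6}, now to $x_0 \ll x_1 \leq e_1 + \ldots + e_n$, yields $f_j \leq x_1, e_j$ with $x_0 \leq f_1 + \ldots + f_n$.

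Next, since $x' \ll x_0 \leq f_1 + \ldots + f_n$, I can choose $\ll$-approximations $f_j' \ll f_j$ such that $x' \leq f_1' + \ldots + f_n'$: this uses that addition is continuous with respect to suprema of $\ll$-increasing sequences (\axiomO{4}), so the desired $f_j'$ are obtained by picking a sufficiently large common index in $\ll$-increasing sequences approximating each $f_j$. Then the generalized form of \axiomO{7}, applied to the data $f_j' \ll f_j \leq y$, delivers an element $z \in S$ with $f_j' \ll z \leq y$ and $z \leq f_1 + \ldots + f_n$. The three required conditions follow: $z \leq y$ is explicit; $z \leq f_1 + \ldots + f_n \leq n x_1 \leq \infty x$ gives $z \lhd x$; and $x' \leq f_1' + \ldots + f_n' \leq n z \leq \infty z$ gives $x' \lhd z$.

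The main subtlety is arranging the sum estimate $x' \leq f_1' + \ldots + f_n'$ with the strict relations $f_j' \ll f_j$; this is precisely why the extra interpolation step to $x_0$ is needed, ensuring that $x'$ is way-below (and not merely below) the sum $f_1 + \ldots + f_n$. Everything else is routine bookkeeping with \axiomO{6} and \axiomO{7}.
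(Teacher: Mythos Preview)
Your proof is correct and follows essentially the same route as the paper: interpolate below $x$, use \axiomO{6} to split under the finite sum $ny$, pass to $\ll$-approximations of the summands, and then invoke \axiomO{7} to produce the single element $z$. The only difference is that you apply \axiomO{6} twice (once to get the $e_j$, once more to get the $f_j$), whereas the paper applies it once; your second application is harmless but redundant, since after obtaining the $e_j$ with $x_1 \leq e_1 + \ldots + e_n$ you could already choose $e_j' \ll e_j$ with $x_0 \leq e_1' + \ldots + e_n'$ and apply \axiomO{7} directly to $e_j' \ll e_j \leq y$. Thus a single interpolant $x' \ll x'' \ll x$ suffices in place of your longer chain $x' \ll x_0 \ll x_1 \ll x_2 \ll x$.
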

\begin{proof}
Statement~(1) is clear.
To verify~(2), assume that $S$ satisfies \axiomO{6} and \axiomO{7}.
Pick $x'' \in S$ such that $x' \ll x'' \ll x$.
Then $x'' \ll x \leq \infty y$, and we obtain $n \in \NN$ such that $x'' \leq ny$.

It follows from \axiomO{6}, applied for $x' \ll x'' \leq ny = y+\ldots+y$, that there exist $e_1,\ldots,e_n \in S$ such that
\[
x' \ll e_1 + \ldots + e_n, \andSep
e_j \leq x'',y \text{ for $j=1,\ldots,n$}.
\]

Choose $e_j' \in S$ such that
\[
x' \ll e_1' + \ldots + e_n', \andSep
e_j' \ll e_j \text{ for $j=1,\ldots,n$}.
\]

It follows from \axiomO{7}, applied for $e_j' \ll e_j \leq y$, that there exists $z \in S$ such that
\[
e_1',\ldots,e_n' \leq z \leq y, e_1+\ldots+e_n.
\]

Then
\[
x' 
\ll e_1' + \ldots + e_n'
\leq nz, \andSep
z 
\leq e_1+\ldots+e_n
\leq nx'',
\]
and therefore $x' \lhd z \lhd x$.
\end{proof}

Next, we show a useful consequence of having an abundance of soft elements in a \CuSgp;
see \cref{prp:SsoftsubCu}.
In preparation, we first prove the following lemma.

\begin{lma}
\label{prp:SsoftSubCuPre}
Let $(S,\Sigma)$ be a scaled \CuSgp{} with an abundance of strongly soft elements.
Then for every strongly soft element $x \in \Sigma$ and every $x' \in S$ with $x' \ll x$, there exists a strongly soft element $y \in S$ with $x' \ll y \ll x$.
\end{lma}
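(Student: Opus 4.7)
The plan is to combine strong softness of $x$ (which ``cuts'' $x$ from below by an auxiliary element $t$) with the abundance hypothesis (which replaces $t$ by an honestly strongly soft element $s$), and then take $y := x''_0 + s$ where $x''_0$ is a small interpolant of $x'$. Absorption from Theorem \ref{thm:SoftSubSgp}(2) will keep $y$ strongly soft; the relation $x' \ll y$ will come from $x' \ll x''_0 \leq y$; and $y \ll x$ will follow because $y$ sits below $x'' + t$, which is $\ll x$.

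First I will interpolate $x' \ll x''_0 \ll x'' \ll x$, and apply strong softness of $x$ to $x'' \ll x$ to obtain $t \in S$ with $x'' + t \ll x$ and $x'' \leq \infty t$. From $x''_0 \ll x'' \lhd t$, \cref{prp:BasicLhdCu}(1) will furnish $t_0 \in S$ with $x''_0 \lhd t_0 \ll t$. Since $t \leq x'' + t \leq x \in \Sigma$ and $\Sigma$ is downward hereditary, both $t$ and $t_0$ lie in $\Sigma$, so the abundance hypothesis applied to $t_0 \ll t$ will produce a strongly soft $s \in S$ with $t_0 \lhd s \leq t$. Transitivity of $\lhd$ then gives $x''_0 \leq \infty s$, so by absorption $y := x''_0 + s$ is strongly soft, and one checks $x' \ll x''_0 \leq y$ and $y \leq x'' + s \leq x'' + t \ll x$.

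The main obstacle is conceptual: a naive application of the abundance to $x' \ll x$ produces only $x' \lhd y \leq x$, and one cannot upgrade the ideal relation $x' \lhd y$ to $x' \ll y$, since replacing $y$ by an integer multiple $ny$ (which would force $x' \leq ny$ via the definition of $\ll$) generally destroys the bound $y \ll x$. The remedy is to not demand that $y$ alone witness $x'$; instead, one places an interpolant of $x'$ additively into $y$ and invokes absorption to preserve strong softness. Strong softness of $x$ enters essentially, providing the auxiliary element $t$ that simultaneously delivers room for a soft witness $s$ (via abundance applied to $t_0 \ll t$) and keeps the final sum below $x$.
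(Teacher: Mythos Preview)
Your argument is correct and follows essentially the same route as the paper's proof: interpolate below $x$, use strong softness of $x$ to produce an auxiliary $t$ with $x''+t \ll x$ and $x'' \lhd t$, pass to a smaller $t_0 \ll t$ still dominating an interpolant in the ideal sense, apply the abundance hypothesis inside $\Sigma$ to replace $t_0 \ll t$ by a genuinely strongly soft $s \leq t$, and then set $y$ to be the sum of the interpolant and $s$, invoking \cref{thm:SoftSubSgp}(2) for strong softness of $y$. The only cosmetic difference is that the paper quotes \cref{prp:CharStrongSoft} (obtaining $z+t \leq x \leq \infty t$) rather than the raw definition of strong softness, and writes the ``shrink $t$'' step directly rather than citing \cref{prp:BasicLhdCu}(1).
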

\begin{proof}
We adapt the proof of \cite[Proposition~5.3.18]{AntPerThi18TensorProdCu}. 
Choose $z',z \in S$ with $x' \ll z' \ll z \ll x$.
Since $x$ is strongly soft, we know by \cref{prp:CharStrongSoft} that there exists $t \in S$ such that
\[
z+t \leq x \leq \infty t.
\]
Take $t' \in S$ such that
\[
t' \ll t, \andSep 
z \ll \infty t'.
\]
 
Since $t \leq x \in \Sigma$, we have $t \in \Sigma$.
Using that $(S,\Sigma)$ has an abundance of soft elements, we obtain a strongly soft element $u \in S$ such that
\[
t' \lhd u \ll t.
\]

Set $y := z'+u$.
Then
\[
x'
\ll z' 
\leq z'+u
= y, \andSep
y 
= z'+u
\ll z+t
\leq x.
\]
Further, using that $z' \leq \infty t' \leq \infty u$, it follows from \cref{thm:SoftSubSgp} that $y = z'+u$ is strongly soft. 
\end{proof}

As defined in \cite[Definition 4.1]{ThiVil21DimCu2}, recall that a submonoid $T$ of a \CuSgp{} $S$ is a \emph{sub-\CuSgp{}} of $S$ if it is closed under suprema of increasing sequences and is such that, for every $x\in T$ and $x'\in S$ with $x'\ll x$, there exists $y\in T$ such that $x'\ll y\ll x$.

\begin{prp}
\label{prp:SsoftsubCu}
Let $S$ be a \CuSgp{} with an abundance of strongly soft elements.
Then the set $S_\soft$ of strongly soft elements in $S$ forms a sub-\CuSgp{} of $S$.
\end{prp}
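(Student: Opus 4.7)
The plan is to verify the three conditions in the definition of a sub-$\CatCu$-semigroup: being a submonoid, being closed under suprema of increasing sequences, and having the intermediate element property (given $x \in S_\soft$ and $x' \ll x$ in $S$, produce $y \in S_\soft$ with $x' \ll y \ll x$). The first two conditions require no new argument: they are precisely parts~(1) of \cref{thm:SoftSubSgp}, and they hold for any \CuSgp{} whatsoever, independently of the abundance hypothesis.

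The only condition for which the abundance hypothesis is needed is the intermediate element property. Here I would simply invoke \cref{prp:SsoftSubCuPre} with the trivial scale $\Sigma = S$, which is a valid scale in the sense recalled before \cref{dfn:AbSoft} (it is clearly closed under suprema, downward-hereditary, and generates $S$ as an ideal). Under the standing hypothesis, $(S,S)$ is a scaled \CuSgp{} with an abundance of strongly soft elements, so for any strongly soft $x \in S = \Sigma$ and any $x' \ll x$, \cref{prp:SsoftSubCuPre} produces a strongly soft $y \in S$ with $x' \ll y \ll x$, which is exactly what we need.

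I do not expect any real obstacle here: all the substantive work has been done in \cref{thm:SoftSubSgp} (closure under addition, suprema, and the absorption property) and in \cref{prp:SsoftSubCuPre} (the delicate construction of a strongly soft intermediate element using the abundance hypothesis together with \cref{prp:CharStrongSoft} and \cref{thm:SoftSubSgp}(2)). The present proposition is essentially a packaging statement that records the conclusion in the language of sub-\CuSgp{s} introduced in \cite{ThiVil21DimCu2}.
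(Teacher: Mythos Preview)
Your proposal is correct and matches the paper's proof essentially line for line: the paper also invokes \cref{thm:SoftSubSgp} for the submonoid and suprema-closure conditions, and \cref{prp:SsoftSubCuPre} (implicitly with the trivial scale $\Sigma=S$, exactly as you spell out) for the intermediate element property. Your observation that this proposition is a packaging statement is accurate.
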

\begin{proof}
By \cref{thm:SoftSubSgp}, $S_\soft$ is a submonoid of $S$ that is closed under suprema of increasing sequences.
Further, by \cref{prp:SsoftSubCuPre}, for all $x \in S_\soft$ and $x' \in S$ with $x'\ll x$, there exists $y \in S_\soft$ such that $x' \ll y \ll x$.
\end{proof}

In \cite[Definition~5.1]{KirRor15CentralSeqCharacters}, Kirchberg and R{\o}rdam define a \ca{} $A$ to have the \emph{$2$-splitting property} if there exist full elements $a,b\in A_+$ with $a \perp b$.
They only apply their definition to unital \ca{s}.
We propose below a notion that considers the $2$-splitting property for all hereditary sub-\ca{s}, and that also allows for a small `error' (which is only relevant for nonunital subalgebras).

\begin{dfn}
\label{dfn:Her2Splitting}
We say that a \ca{} $A$ has the \emph{hereditary $2$-splitting property} if for every $a\in A_+$ and every $\varepsilon >0$ there exist positive elements $b,c \in \overline{aAa}$ such that $b \perp c$ and $(a-\varepsilon )_+ \lhd b,c$.

We say that a scaled \CuSgp{} $(S,\Sigma)$ has the \emph{hereditary $2$-splitting property} if for every $x',x \in \Sigma$ satisfying $x' \ll x$ there exist $y,z \in S$ such that $y+z \leq x$, and $x' \lhd y,z$.
\end{dfn}

We first show that in \CuSgp{s} satisfying \axiomO{5}-\axiomO{7} (which always hold in Cuntz semigroups of \ca{s}), one can ensure that in the definition of the hereditary $2$-splitting property at least one of the `splitting' elements generates an ideal that not only contains $x'$ but even $x$.

\begin{lma}
\label{prp:PreCuEquiv}
Let $(S,\Sigma)$ be a scaled \CuSgp{} satisfying \axiomO{5}-\axiomO{7} and with the hereditary $2$-splitting property.
Let $x', x \in \Sigma$ be such that $x' \ll x$. 
Then there exist $y,z \in \Sigma$ such that 
\[
y+z \leq x, \quad
x' \lhd y, \andSep
x \lhd z.
\]
\end{lma}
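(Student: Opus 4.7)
The plan is to iterate the hereditary $2$-splitting property along a $\ll$-increasing approximation of $x$ to build orthogonal pieces whose ``second halves'' accumulate to a $z$ with $x \lhd z$. More precisely, choose a $\ll$-increasing sequence $(x_n)_{n\geq 0}$ in $S$ with $x_0 = x'$ and $\sup_n x_n = x$. I will construct inductively elements $a_0, a_1, a_2, \ldots \in S$ together with remainders $r_0, r_1, r_2, \ldots \in S$ satisfying, for every $n$,
\[
a_0 + a_1 + \cdots + a_n + r_n \leq x, \quad
x' \lhd a_0, \quad
x_i \lhd a_i \text{ for } 1 \leq i \leq n, \quad
x \lhd r_n.
\]
Once this is achieved, setting $y := a_0$ and $z := \sup_N (a_1 + \cdots + a_N)$ will do: by \axiomO{4} we have $y + z \leq x$; clearly $x' \lhd y$; and since $x_n \lhd a_n \leq z$ for every $n \geq 1$, the identity $x = \sup_n x_n$ forces $x \leq \infty z$, that is, $x \lhd z$.

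For the base case, I apply the hereditary $2$-splitting property to the pair $x_0 \ll x$ (which lies in $\Sigma$) to produce $a_0, r_0 \in S$ with $a_0 + r_0 \leq x$ and $x_0 \lhd a_0, r_0$; an initial application of \axiomO{5} lets me replace $r_0$ by a (slightly different) element still orthogonal to an appropriate $\ll$-predecessor of $a_0$ but now satisfying the stronger cover $x \leq a_0 + r_0$, from which the refresh invariant $x \lhd r_0$ is derived via \axiomO{6}.

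For the inductive step, given $(a_0, \ldots, a_n, r_n)$ with the invariants above, the refresh invariant $x \lhd r_n$ together with $x_{n+1} \ll x$ yields, by \axiomO{2} and the definition of $\ll$, an element $r_n^* \in S$ with $r_n^* \ll r_n$ and $x_{n+1} \leq k\, r_n^*$ for some $k$; in particular $x_{n+1} \lhd r_n^*$. Applying the hereditary $2$-splitting property to $r_n^* \ll r_n$ produces $a_{n+1}, \tilde{r}_{n+1} \in S$ with $a_{n+1} + \tilde{r}_{n+1} \leq r_n$ and $r_n^* \lhd a_{n+1}, \tilde{r}_{n+1}$, so that $x_{n+1} \lhd a_{n+1}$ and $x_{n+1} \lhd \tilde{r}_{n+1}$. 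An application of \axiomO{5} to the relation $a_0 + \cdots + a_{n+1} + \tilde r_{n+1} \leq x$ (with suitable $\ll$-predecessors) produces a new remainder $r_{n+1}$ orthogonal to the accumulated pieces and satisfying the cover $x \leq a_0 + \cdots + a_{n+1} + r_{n+1}$; combined with an \axiomO{6}--\axiomO{7} refinement that traces the ideal of $x$ from this cover back to $r_{n+1}$, this restores the refresh invariant $x \lhd r_{n+1}$.

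The main technical obstacle is precisely the preservation of the refresh invariant $x \lhd r_n$: the hereditary $2$-splitting property on its own only propagates the ideal of the chosen $\ll$-predecessor, so a naive iteration would produce remainders whose ideals shrink to contain only the $x_n$'s. The \axiomO{5}-refresh step is what overcomes this, but checking that the new $r_{n+1}$ indeed carries $x$ in its ideal requires careful bookkeeping of $\ll$-predecessors together with \axiomO{6} and \axiomO{7}; this is where the hypotheses \axiomO{5}--\axiomO{7} enter essentially.
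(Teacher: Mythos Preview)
Your iterative framework is a detour: if the ``refresh'' step (upgrading $x_0 \lhd r_0$ to $x \lhd r_0$) actually works, then your base case alone already proves the lemma --- you would have $a_0' + r_0^{\text{new}} \leq x$ with $x' \lhd a_0'$ and $x \lhd r_0^{\text{new}}$, and could stop. So the entire content of the lemma is hidden in the refresh, and your plan does not explain how to carry it out. Saying ``derived via \axiomO{6}'' is not enough: from a cover $x \leq a_0 + r_0^{\text{new}}$ obtained by \axiomO{5}, neither \axiomO{6} nor \axiomO{7} gives $x \lhd r_0^{\text{new}}$ unless you already know $a_0 \lhd r_0^{\text{new}}$, which is precisely what is missing.

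The paper's proof shows what the refresh actually requires, and it takes a \emph{single} application of the $2$-splitting property. After splitting $x$ into $s+t \leq x$ with $x_3 \lhd s,t$ (for some intermediate $x' \ll x_1 \ll x_2 \ll x_3 \ll x$), the key move is to use \axiomO{6}--\axiomO{7} (packaged as \cref{prp:BasicLhdCu}(2)) to shrink $s$ to an element $s' \leq s$ satisfying $x_1 \lhd s' \lhd x_2$. The point is that $s'$ now lies in the ideal generated by the \emph{other} piece, since $s' \lhd x_2 \lhd t$. Only then does the \axiomO{5}-complement $c$ of (a $\ll$-predecessor of) $s'$ in $x$ satisfy $x \leq s' + c \leq \infty c$, giving $x \lhd c$. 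This ``make one piece ideal-dominated by the other'' step is the missing idea in your plan; once it is in place, no iteration is needed.
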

\begin{proof}
Choose $x_1,x_2,x_3 \in S$ such that
\[
x' \ll x_1 \ll x_2 \ll x_3 \ll x.
\]

Using the hereditary $2$-splitting property for $x_3 \ll x$, we obtain $s,t \in S$ such that
\[
s+t \leq x, \andSep
x_3 \lhd s,t.
\]

Applying \cref{prp:BasicLhdCu}(2) for $x_1 \ll x_2 \lhd s$, we obtain $s'$ such that
\[
x_1 \lhd s' \lhd x_2, \andSep
s' \leq s.
\]

Applying \cref{prp:BasicLhdCu}(1) for $x' \ll x_1 \lhd s'$ and for $x_2 \ll x_3 \lhd t$, we obtain $s''$ and $t'$ such that
\[
x' \lhd s'' \ll s', \andSep
x_2 \lhd t' \ll t.
\]

Using \axiomO{5} for $s'+t \leq x$, and $s'' \ll s'$, and $t' \ll t$, we obtain $c \in S$ such that
\[
s''+c \leq x \leq s'+c, \andSep
t' \leq c.
\]
We have $s' \lhd x_2 \lhd t' \leq c$, and therefore $x \lhd c$.
Further, $x' \lhd s''$.
This shows that $y := s''$ and $z := c$ have the desired properties.
\end{proof}

\begin{thm}
\label{prp:CuEquivAbSoft}
Let $(S,\Sigma)$ be a scaled \CuSgp{} satisfying \axiomO{5}-\axiomO{7}. 
Then the following are equivalent:
\begin{enumerate}
\item 
For every $x \in \Sigma$ there exists a strongly soft $y \in S$ such that $x \lhd y \leq x$.
\item 
$(S,\Sigma)$ has an abundance of strongly soft elements, that is, for every $x',x \in \Sigma$ with $x' \ll x$, there exists a strongly soft $y \in S$ with $x' \lhd y \leq x$.
\item
$(S,\Sigma)$ has the $2$-splitting property, that is, for every $x',x \in \Sigma$ with $x' \ll x$, there exists $y,z \in S$ with $y+z \leq x$ and $x' \lhd y,z$.
\end{enumerate}
\end{thm}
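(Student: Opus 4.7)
The plan is to prove the cycle (1) $\Rightarrow$ (2) $\Rightarrow$ (3) $\Rightarrow$ (1).

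The implication (1) $\Rightarrow$ (2) is immediate: given $x' \ll x$ in $\Sigma$, applying (1) to $x$ produces a strongly soft $y \leq x$ with $x \lhd y$, and then $x' \leq x \lhd y$ yields $x' \lhd y$.

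For (2) $\Rightarrow$ (3), I will apply (2) to $x' \ll x$ to obtain a strongly soft $w$ with $x' \lhd w \leq x$. Writing $w = \sup_k w^{(k)}$ with $w^{(k)} \ll w^{(k+1)}$, the increasing sequence $(k w^{(k)})_k$ has supremum $\infty w \geq x$, and $x' \ll x$ thus yields some $k$ with $x' \leq k w^{(k)}$, whence $x' \lhd w^{(k)}$. Setting $w' := w^{(k)}$ and applying \cref{prp:CharStrongSoft}(3) to the strongly soft $w$ for $w' \ll w$ produces $t$ with $w' + t \leq w$ and $w' \leq \infty t$; then $y := w'$ and $z := t$ satisfy $y + z \leq w \leq x$, $x' \lhd w' = y$, and $x' \leq \infty w' \leq \infty t = \infty z$, so $x' \lhd z$ as well.

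The main obstacle is (3) $\Rightarrow$ (1), which requires a careful iterative extraction of orthogonal pieces. Given $x \in \Sigma$, I would fix a $\ll$-increasing sequence $(x'_n)_{n \geq 0}$ in $\Sigma$ with supremum $x$, set $z_0 := x$, and recursively construct sequences $(y_n)_{n \geq 1}$ and $(z_n)_{n \geq 1}$ as follows. At step $n \geq 0$, since $x'_n \ll x \lhd z_n$, \cref{prp:BasicLhdCu}(1) produces $z_n^\bullet \ll z_n$ with $x'_n \lhd z_n^\bullet$; applying \cref{prp:PreCuEquiv} to $z_n^\bullet \ll z_n$ (valid since $z_n \in \Sigma$) then yields $y_{n+1}, z_{n+1}$ with $y_{n+1} + z_{n+1} \leq z_n$, $x'_n \lhd z_n^\bullet \lhd y_{n+1}$, and $x \lhd z_n \lhd z_{n+1}$. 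Telescoping gives $\sum_{k=1}^N y_k + z_N \leq x$ for every $N$, so $y := \sup_N \sum_{k=1}^N y_k \leq x$; and $x'_n \lhd y_{n+1} \leq y$ for every $n$ forces $x = \sup_n x'_n \lhd y$.

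To conclude, I would verify strong softness of $y$ using \cref{prp:CharStrongSoft}(3). Given $y' \ll y$, the supremum expression for $y$ gives some $N_1$ with $y' \leq \sum_{k=1}^{N_1} y_k$, while $y' \ll y \leq x = \sup_\ell x'_\ell$ produces some $\ell \geq N_1$ with $y' \leq x'_\ell$. Setting $t := \sum_{k > \ell} y_k$, one obtains $y' + t \leq \sum_{k=1}^\ell y_k + t = y$ and $y' \leq x'_\ell \lhd y_{\ell+1} \leq t$, giving $y' \leq \infty t$ as required.
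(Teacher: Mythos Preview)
Your proof is correct and follows essentially the same route as the paper: the implications (1)$\Rightarrow$(2)$\Rightarrow$(3) are handled identically, and for (3)$\Rightarrow$(1) you carry out the same iterative construction using \cref{prp:BasicLhdCu}(1) and \cref{prp:PreCuEquiv} to produce the sequences $(y_n)$ and $(z_n)$, with the same telescoping argument yielding $y \leq x \lhd y$.

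The one noteworthy difference is the verification that $y$ is strongly soft. The paper first modifies the $y_n$ via \cref{prp:BasicLhdCu}(2), replacing them by $y_n' \leq y_n$ satisfying $y_n' \lhd y_{n+1}'$, and then appeals to \cref{prp:ExaSoft}. You instead verify \cref{prp:CharStrongSoft}(3) directly: given $y' \ll y$, you use $y \leq x = \sup_\ell x_\ell'$ to dominate $y'$ by some $x_\ell'$, and then the already-established relation $x_\ell' \lhd y_{\ell+1}$ provides the tail $t = \sum_{k>\ell} y_k$ with $y'+t \leq y$ and $y' \leq \infty t$. This is slightly more economical, since it bypasses both the modification step and \cref{prp:ExaSoft}; the paper's version, on the other hand, isolates the softness of the infinite sum in a reusable lemma.
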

\begin{proof}
It is clear that~(1) implies~(2).
Using \cref{prp:CharStrongSoft}, we see that~(2) implies~(3).

To prove that~(3) implies~(1), let $x \in \Sigma$.
We need to find a strongly soft $y \in S$ with $y \leq x \lhd y$.

Choose a $\ll$-increasing sequence $(x_n)_n$ with supremum $x$.
We will inductively find elements $z_n \in S$ for $n \geq 0$ and $y_n \in S$ for $n \geq 1$ such that 
\[
y_{n+1} + z_{n+1} \leq z_{n}, \quad 
x_{n+1} \lhd y_{n+1}, \andSep
x \lhd z_n
\]
for all $n\in\NN$.

To begin, set $z_0 := x$. 
Now, let $n \geq 0$, and assume that we have chosen $z_n$.
Using \cref{prp:BasicLhdCu}(1) for $x_{n+1} \ll x \lhd z_n$, we obtain $z_n'$ such that
\[
x_{n+1} \lhd z_n' \ll z_n.
\]

Applying \cref{prp:PreCuEquiv} for $z_n' \ll z_n$, we obtain $y_{n+1},z_{n+1}$ such that
\[
y_{n+1}+z_{n+1} \leq z_n , \quad
z_n' \lhd y_{n+1}, \andSep
z_n \lhd z_{n+1}.
\]

Then $x_{n+1} \lhd z_n' \lhd y_{n+1}$ and $x \lhd z_n \lhd z_{n+1}$, as desired.

Next, we alter the elements $y_n$ so that they generate an increasing sequence of ideals.
Given $n \geq 1$, apply \cref{prp:BasicLhdCu}(2) for $x_{n-1} \ll x_n \lhd y_n$ to obtain $y_n'$ such that
\[
x_{n-1} \lhd y_n' \lhd x_n, \andSep
y_n' \leq y_n.
\]

Set $y := \sum_{n=1}^\infty y_n'$.
We get $y_n' \lhd x_n \lhd y_{n+1}'$ for every $n\geq 1$.
Therefore, $y$ is strongly soft by \cref{prp:ExaSoft}. 

For each $n \geq 1$, we have
\begin{align*}
y_1'+\ldots+y_{n-1}'+y_n'
&\leq y_1+\ldots+y_{n-1}+y_n+z_n \\
&\leq y_1+\ldots+y_{n-1}+z_{n-1}
\leq \ldots 
\leq y_1+z_1
\leq z_0 = x
\end{align*}
and therefore $y = \sup_n \sum_{j=1}^n y_j' \leq x$.
Further, for every $n \geq 1$, we have
\[
x_n \lhd y_n' \leq y,
\]
and, consequently, $x = \sup_n x_n \lhd y$.
\end{proof}

\begin{rmk}
In \cref{prp:CuEquivAbSoft}, the implications `(1)$\Rightarrow$(2)$\Rightarrow$(3)' hold for arbitrary \CuSgp{s}.
The assumptions \axiomO{5}-\axiomO{7} are only needed for the the implication `(1)$\Leftarrow$(3)'.
\end{rmk}

The following lemma is the $C^*$-analogue of \cref{prp:ExaSoft}.

\begin{lma}
\label{prp:ConstructSoft}
Let $A$ be a \ca{}, and let $(a_n)_n$ be a sequence of positive, contractive, pairwise orthogonal elements in $A$.
Assume that $a_n \lhd a_{n+1}$ for every $n\in\NN$.
Then $a:=\sum_{n=0}^\infty \tfrac{1}{2^n}a_n$ is soft.
\end{lma}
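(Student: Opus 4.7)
The plan is to verify condition~(4) of \cref{prp:CharSoftElement}: for every $\varepsilon>0$, exhibit a positive element $b\in\overline{aAa}$ with $b\perp(a-\varepsilon)_+$ and $a \lhd b$. The key observation is that pairwise orthogonality of the $a_n$ together with $\|a_n\|\leq 1$ makes functional calculus on $a$ split along the orthogonal decomposition, so that for every continuous $f\colon\RR\to\RR$ with $f(0)=0$ one has $f(a)=\sum_{n}f(\tfrac{1}{2^n}a_n)$; in particular
\[
(a-\varepsilon)_+ = \sum_{n=0}^{\infty}\left(\tfrac{1}{2^n}a_n-\varepsilon\right)_+.
\]

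Given $\varepsilon>0$, I would fix $N\in\NN$ with $2^{-N}<\varepsilon$. Since $\tfrac{1}{2^n}a_n\leq 2^{-n}\cdot 1\leq\varepsilon$ for $n\geq N$, all summands with $n\geq N$ vanish, so $(a-\varepsilon)_+$ is a finite sum supported in the hereditary sub-\ca{} generated by $a_0,\dots,a_{N-1}$. I would then set
\[
b := \sum_{n\geq N}\tfrac{1}{2^n}a_n,
\]
which is positive and satisfies $b\leq a$, hence $b\in\overline{aAa}_+$. Because the $a_n$ are pairwise orthogonal, $b$ lies in the hereditary sub-\ca{} generated by $\{a_n:n\geq N\}$, which is orthogonal to the one generated by $\{a_0,\dots,a_{N-1}\}$; consequently $b\perp(a-\varepsilon)_+$.

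It remains to check $a\lhd b$. The hypothesis $a_n\lhd a_{n+1}$ iterates to $a_n\lhd a_N$ for every $n\leq N$. Since $\tfrac{1}{2^N}a_N\leq b$, we have $a_N\lhd b$, and therefore $a_n\lhd b$ for all $n<N$. For $n\geq N$ the inequality $\tfrac{1}{2^n}a_n\leq b$ gives $a_n\lhd b$ directly. As every summand of $a=\sum_n \tfrac{1}{2^n}a_n$ belongs to the closed ideal generated by $b$, so does $a$. The only subtle point is the use of orthogonality to justify the termwise functional calculus identity; once that is in place, the argument is essentially bookkeeping and parallels the ideal-chain trick used in \cref{prp:ExaSoft}.
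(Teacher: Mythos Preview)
Your proof is correct and follows essentially the same approach as the paper: both verify condition~(4) of \cref{prp:CharSoftElement} by using pairwise orthogonality to split the functional calculus termwise, observing that $(a-\varepsilon)_+$ is supported on finitely many $a_n$, and taking $b$ to be the tail of the series. The only cosmetic difference is in the indexing of the cutoff (the paper chooses $N$ with $2^{-N}\leq\varepsilon$ and lets $b=\sum_{n\geq N+1}\tfrac{1}{2^n}a_n$, whereas you take $2^{-N}<\varepsilon$ and $b=\sum_{n\geq N}\tfrac{1}{2^n}a_n$).
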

\begin{proof}
We verify condition~(3) of \cref{prp:CharSoftElement}. 
Let $\varepsilon>0$.
Choose $N\in\NN$ such that $\tfrac{1}{2^N}\leq\varepsilon$.
For $n\geq N$, we have $\tfrac{1}{2^n}a_n-\varepsilon\leq 0$, and therefore $(\tfrac{1}{2^n}a_n-\varepsilon)_+=0$.
Since the $a_n$'s are pairwise orthogonal, we have
\[
(a-\varepsilon)_+ 
= \sum_{n=0}^\infty (\tfrac{1}{2^n}a_n - \varepsilon)_+
= \sum_{n=0}^N (\tfrac{1}{2^n}a_n - \varepsilon)_+.
\]

Set $b:=\sum_{n=N+1}^\infty \tfrac{1}{2^n}a_n$.
Then $b\perp(a-\varepsilon)_+$.
For $n\geq N+1$, it is clear that $a_n\in \overline{\linSpan}AbA$. 
For $n\leq N$, it follows from the assumption that $a_n\in \overline{\linSpan}AbA$.
This implies that $a\in \overline{\linSpan}AbA$, as desired.
\end{proof}

The following lemma is the $C^*$-analogue of \cref{prp:BasicLhdCu}.

\begin{lma}
\label{prp:BasicLhd}
Let $A$ be a \ca{}, and let $a,b\in A_+$ satisfy $a \lhd b$.
Then:
\begin{enumerate}
\item
For every $\varepsilon>0$, there exists $\delta>0$ such that $(a-\varepsilon)_+\lhd(b-\delta)_+$.
\item
For every $\varepsilon>0$, there exists $c\in\overline{bAb}_+$ such that $(a-\varepsilon)_+\lhd c \lhd a$.
\end{enumerate}
\end{lma}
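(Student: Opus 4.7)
For part~(1), I plan to pass to the Cuntz semigroup. The hypothesis $a \lhd b$ translates to $[a] \leq \infty[b]$ in $\Cu(A)$, and the standard fact $[(a-\varepsilon)_+] \ll [a]$ combined with the observation $\infty[b] = \sup_{\delta>0} \infty[(b-\delta)_+]$ (an increasing supremum as $\delta \to 0$) yields, via the way-below property, some $\delta > 0$ with $[(a-\varepsilon)_+] \leq \infty[(b-\delta)_+]$. This is exactly $(a-\varepsilon)_+ \lhd (b-\delta)_+$.

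For part~(2), the plan is to produce an abstract Cuntz class that lies both below $[b]$ and in the ideal of $[a]$, and then realize it by a concrete element of $\overline{bAb}$ via R{\o}rdam's lemma. The abstract step uses \cref{prp:BasicLhdCu}(2), which applies because $\Cu(A)$ satisfies \axiomO{5}--\axiomO{7}: applied to $x' := [(a-\varepsilon/2)_+]$, $x := [(a-\varepsilon/4)_+]$, and $y := [b]$ (noting $x' \ll x$ and $x \lhd y$ since $[(a-\varepsilon/4)_+] \leq [a] \leq \infty[b]$), it yields $z \in \Cu(A)$ with $z \leq [b]$ and $[(a-\varepsilon/2)_+] \lhd z \lhd [(a-\varepsilon/4)_+] \leq [a]$. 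Writing $z = [d]$, the inequality $[d] \leq [b]$ translates to $d \precsim b$. I would then invoke \cite[Proposition~2.4]{Ror92StructureUHF2}: since $d \precsim b$, for each $\varepsilon' > 0$ there exist $v \in A$ and $\delta > 0$ with $(d-\varepsilon')_+ = v(b-\delta)_+ v^*$. Setting $s := v(b-\delta)_+^{1/2}$ gives $(d-\varepsilon')_+ = ss^*$, and then $c := s^*s = (b-\delta)_+^{1/2} v^*v (b-\delta)_+^{1/2}$ lies in $\overline{bAb}_+$ with $[c] = [ss^*] = [(d-\varepsilon')_+]$. It is immediate that $[c] \leq [d] = z \lhd [a]$, yielding $c \lhd a$; for $(a-\varepsilon)_+ \lhd c$, I would choose $\varepsilon'$ small enough via the way-below chain $[(a-\varepsilon)_+] \ll [(a-\varepsilon/2)_+] \leq \infty[d] = \sup_{\varepsilon'>0} \infty[(d-\varepsilon')_+]$, which is a directed supremum, so some such $\varepsilon'$ must force $[(a-\varepsilon)_+] \leq \infty[(d-\varepsilon')_+] = \infty[c]$.

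The main technical point is juggling the multiple layers of slack -- the $\varepsilon/2, \varepsilon/4$ cut-downs of $a$ needed to invoke \cref{prp:BasicLhdCu}(2), and the separate $\varepsilon'$ cut-down of $d$ needed for R{\o}rdam's lemma -- so that both $(a-\varepsilon)_+ \lhd c$ and $c \lhd a$ survive the translation. Beyond this bookkeeping, the proof is a clean combination of the abstract Cuntz-semigroup lemma \cref{prp:BasicLhdCu} with the standard fact that Cuntz subequivalence can be realized (up to an arbitrarily small cut-down) inside a hereditary subalgebra.
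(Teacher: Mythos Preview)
Your proposal is correct, but it takes a genuinely different route from the paper's proof.

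For part~(1), the paper works directly in the \ca: using that $a\in\overline{\linSpan}AbA$, it picks a finite sum $\sum_{n=1}^N r_n^* b r_n$ within~$\varepsilon$ of~$a$, perturbs $b$ to $(b-\delta)_+$ for small~$\delta$ while keeping the approximation, and then applies \cite[Lemma~2.5(ii)]{KirRor00PureInf} to conclude $(a-\varepsilon)_+ \lhd (b-\delta)_+$. Your argument instead passes to $\Cu(A)$ and uses only the order-theoretic facts $[(a-\varepsilon)_+]\ll[a]\leq\infty[b]=\sup_n\infty[(b-\tfrac1n)_+]$. Both are short; the paper's version is more elementary (no Cuntz semigroup needed), while yours makes transparent that~(1) is nothing but the way-below relation at work.

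For part~(2), the paper again argues concretely: setting $I:=\overline{\linSpan}AaA$, it observes that $\overline{bAb}\cap I$ generates~$I$ as an ideal (because $a\in\overline{AbA}\cap I$), takes an approximate unit of $\overline{bAb}\cap I$, and extracts a single positive $c\in\overline{bAb}\cap I$ whose ideal comes within~$\varepsilon$ of~$a$; then an argument as in~(1) gives $(a-\varepsilon)_+\lhd c$. Your approach instead \emph{deduces} the \ca{ic} statement from its $\Cu$-analogue \cref{prp:BasicLhdCu}(2), realizing the abstract class~$z\leq[b]$ by an element of $\overline{bAb}$ via R{\o}rdam's lemma. This is a pleasant observation---the paper presents \cref{prp:BasicLhd} as the ``$C^*$-analogue'' of \cref{prp:BasicLhdCu} and proves the two independently, whereas you show that the former actually follows from the latter once one knows how to realize Cuntz classes inside hereditary subalgebras. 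The cost is the extra bookkeeping with the $\varepsilon/4,\varepsilon/2,\varepsilon'$ cut-downs and the (implicit) passage through $A\otimes\KK$ when representing~$z$; the paper's approximate-unit argument avoids this and is closer to first principles.
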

\begin{proof}
To prove~(1), take $\varepsilon>0$. 
Using \cite[Corollary~II.5.2.13]{Bla06OpAlgs}, let $N\in\NN$ and $r_n\in A$ for $n=1,\ldots ,N$ be such that $\Vert a - \sum_{n=1}^N r_n^* br_n\Vert <\varepsilon $. 
Then, there exists $\delta >0$ such that 
\[
\left\| a - \sum_{n=1}^N r_n^* (b-\delta )_+r_n \right\| <\varepsilon 
\]

Applying \cite[Lema~2.5(ii)]{KirRor00PureInf}, we obtain an element $r\in A$ such that $(a-\varepsilon)_+=\sum_{n=1}^N (r_n r)^* (b-\delta )_+ (r_n r)$, as required.

To prove~(2), take $\varepsilon>0$.
Set $I := \overline{\linSpan} AaA$.
Then $\overline{bAb} \cap I$ generates $I$ as a closed ideal.
Thus, using an approximate unit in $\overline{bAb} \cap I$, we can find a positive element $c \in \overline{bAb} \cap I$ such that $a$ is at distance at most $\varepsilon$ from the ideal generated by $c$.
Using a similar argument as in~(1), one can show that $(a-\varepsilon)_+ \lhd c$.
Thus, $c$ has the desired properties.
\end{proof}

\begin{lma}
\label{prp:SoftAlgCu}
Let $A$ be a \ca{}, let $a,c \in A_+$, let $\varepsilon>0$, and let $(s_n)_n$ be a sequence of strongly soft elements in $\Cu(A)$ such that $[a] \leq \infty s_n$ for all $n$, and 
\[
\ldots \ll s_2 \ll s_1 \ll s_0 \leq [c].
\]
Then there exists a soft element $b \in \overline{cAc}$ such that $(a-\varepsilon )_+ \lhd b$.  
\end{lma}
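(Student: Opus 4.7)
The plan is to use the strong softness of the $s_n$'s to extract an orthogonal decomposition in $\Cu(A)$, realize this data as a family of pairwise orthogonal positive elements in $\overline{cAc}$, and assemble a soft $b$ via \cref{prp:ConstructSoft}. First I would apply \cref{prp:CharStrongSoft}(2) to each pair $s_{n+1} \ll s_n$ to obtain a strongly soft element $t_n \in \Cu(A)$ with
\[
s_{n+1} + t_n \leq s_n \leq \infty t_n,
\]
which combined with $[a] \leq \infty s_n$ gives $[(a-\varepsilon)_+] \leq \infty t_n'$ for some cut-down $t_n' \ll t_n$. Iterating $s_k \geq s_{k+1}+t_k$ yields $\sum_{k=0}^N t_k \leq s_0 \leq [c]$ for every $N$, so the $t_k$'s form an orthogonal family in $\Cu(A)$ sitting underneath $[c]$.

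Next I would inductively realize this orthogonal structure in $\overline{cAc}$. Starting from $d_0 := c$, the $n$-th step is a R\o{}rdam-type orthogonal realization inside $\overline{d_n A d_n}$: since $s_{n+1} + t_n \leq s_n \leq [d_n]$, one can split off orthogonal positive contractions $d_{n+1}, b_n \in \overline{d_n A d_n}$ whose Cuntz classes dominate suitable cut-downs of $s_{n+1}$ (to continue the induction) and of $t_n'$ (so that $(a-\varepsilon)_+ \lhd b_n$). The pairwise orthogonality of the $b_n$'s follows from the nested inclusions $\overline{d_{n+1} A d_{n+1}} \subseteq \overline{d_n A d_n}$ together with $b_n \perp d_{n+1}$. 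Setting $b := \sum_{n=0}^\infty \tfrac{1}{2^n} b_n \in \overline{cAc}$ then yields $(a-\varepsilon)_+ \lhd b_0 \leq 2b$, hence $(a-\varepsilon)_+ \lhd b$.

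The hard part will be securing the chain condition $b_n \lhd b_{n+1}$ required to invoke \cref{prp:ConstructSoft} and conclude that $b$ is soft. The naive choice $[b_n] \gtrsim t_n$ gives $b_n$ the ideal of $s_n$, and since the ideals of the $s_n$ are only known to form a \emph{decreasing} sequence, the ideal-chain of the $b_n$'s threatens to run in the wrong direction. I expect the crux of the proof to be a refinement of the orthogonal realization step ensuring that every $b_n$ both lies in and is full in one common ideal of $A$ containing $(a-\varepsilon)_+$---for instance the ideal generated by $(a-\varepsilon)_+$ itself, which is automatically contained in every ideal of $s_n$. With this refinement all $b_n$'s generate the same ideal of $A$, the chain condition $b_n \lhd b_{n+1}$ becomes trivial, and \cref{prp:ConstructSoft} produces the desired soft element.
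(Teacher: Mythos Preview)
Your overall strategy---extract from the strongly soft $s_n$'s an orthogonal family $(t_n)_n$ below $[c]$, realize it inductively as pairwise orthogonal positive elements in $\overline{cAc}$ via a R{\o}rdam-type splitting (\cite[Lemma~2.3(ii)]{RobRor13Divisibility} in the paper), and assemble a soft $b$ with \cref{prp:ConstructSoft}---matches the paper's proof, and you have correctly located the crux in securing the chain condition $b_n \lhd b_{n+1}$.

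Where your proposal falls short is the specific fix you suggest for that chain condition. Forcing every $b_n$ to generate \emph{the same} ideal, namely that of $(a-\varepsilon)_+$, runs into the cut-down loss in the only tool at hand: \cref{prp:BasicLhd}(2), applied to $a' \lhd d$, produces an element of $\overline{dAd}$ whose ideal lies between that of $(a'-\delta)_+$ and that of $a'$, and repeatedly hitting one fixed ideal would require a strictly positive element of $\overline{d_nAd_n}\cap I$---not available without separability assumptions. The paper absorbs this loss instead by choosing a strictly decreasing sequence $\gamma_0=\varepsilon>\gamma_1>\gamma_2>\ldots$ and, having realized the orthogonal $d_n$'s as you describe, applying \cref{prp:BasicLhd}(2) with $a'=(a-\gamma_{n+1})_+$ and $\delta=\gamma_n-\gamma_{n+1}$ to obtain $b_n\in\overline{d_nAd_n}$ with
\[
(a-\gamma_n)_+ \lhd b_n \lhd (a-\gamma_{n+1})_+.
\]
Then $b_n \lhd (a-\gamma_{n+1})_+ \lhd b_{n+1}$, so the ideals of the $b_n$'s form an \emph{increasing} chain rather than a constant one---which is all \cref{prp:ConstructSoft} requires---and $(a-\varepsilon)_+=(a-\gamma_0)_+\lhd b_1\lhd b$ gives the fullness of $b$. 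This is exactly the refinement you were looking for; the rest of your outline goes through as written.
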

\begin{proof}
For each $n\in\NN$, using that $s_{n+1} \ll s_n$ and that $s_n$ is strongly soft, we obtain $t_{n+1} \in \Cu(A)$ such that
\[
s_{n+1} + t_{n+1} \ll s_n, \andSep
s_{n+1} \ll \infty t_{n+1}.
\]

Choose $s_n',t_{n+1}' \in \Cu(A)$ such that
\[
s_n' \ll s_n, \quad
s_{n+1} + t_{n+1} \ll s_n', \quad
t_{n+1}' \ll t_{n+1}, \andSep
s_{n+1} \ll \infty t_{n+1}'.
\]

Next, we inductively choose $c_n \in A_+$ for $n \in \NN$ and $d_n \in A_+$ for $n \geq 1$ such that $c_{n+1},d_{n+1}$ are orthogonal elements in $\overline{c_nAc_n}$, and such that 
\[
s_n' \leq [c_n] \leq s_n,\andSep 
t_n' \leq [d_n] \leq t_n.
\]

For $n=0$, set $c_0:=c$.
Now fix $n\in\NN$ and assume that we have already chosen the elements $c_0,\ldots,c_n \in A_+$ and $d_1,\ldots, d_n \in A_+$.
Then
\[
s_{n+1}' \ll s_{n+1}, \quad
t_{n+1}' \ll t_{n+1}, \andSep
s_{n+1} + t_{n+1} \leq s_n' \leq [c_n],
\]

Applying \cite[Lemma~2.3(ii)]{RobRor13Divisibility}, we find orthogonal elements $c_{n+1}$ and $d_{n+1}$ in~$\overline{c_n Ac_n}$ such that 
\[
s_{n+1}' \ll [c_{n+1}] \ll s_{n+1}, \andSep 
t_{n+1}' \ll [d_{n+1}] \ll t_{n+1},
\]
which finishes the inductive argument.

Now choose a strictly decreasing sequence $(\gamma_n)_n$ of positive numbers such that
\[
\ldots < \gamma_2 < \gamma_1 < \gamma_0=\varepsilon.
\]

Given $n \geq 1$, we have
\[
[a] 
\leq \infty s_n
\leq \infty t_n'
\leq \infty [d_n],
\]
and therefore $(a-\gamma_{n+1})_+ \lhd a \lhd d_n$.

Using the equality $(a-\gamma_n)_+ = ((a-\gamma_{n+1})_+-(\gamma_n-\gamma_{n+1}))_+$, and by applying \cref{prp:BasicLhd}~(2), we obtain a positive element $b_n$ in $\overline{d_nAd_n}$ such that
\[
(a-\gamma_n)_+ \lhd b_n \lhd (a-\gamma_{n+1})_+.
\]

By construction, the elements $b_1,b_2,\ldots$ are pairwise orthogonal, and $b_n \lhd b_{n+1}$ for each $n \geq 1$.
Thus, the element $b:=\sum_{n=1}^\infty \tfrac{1}{2^k\|b_k\|}b_k$ is soft by \cref{prp:ConstructSoft}. Note that, since each $b_n$ is in $\overline{cAc}$, we have $b\in\overline{cAc}$. Moreover, one gets
\[
(a-\varepsilon)_+ = (a-\gamma_0)_+ \lhd b_1 \lhd b,
\]
as desired.
\end{proof}

\begin{thm}
\label{prp:EquivAbundance}
Let $A$ be a \ca.
Then the following are equivalent:
\begin{enumerate}
\item
$A$ has an abundance of soft elements. 
\item
$A$ has the hereditary $2$-splitting property. 
\item
$(\Cu(A),\Sigma_A)$ has an abundance of strongly soft elements.
\item
$(\Cu(A),\Sigma_A)$ has the hereditary $2$-splitting property.
\end{enumerate}
\end{thm}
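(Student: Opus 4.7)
The plan is to prove the cycle $(1) \Rightarrow (2) \Rightarrow (4) \Rightarrow (3) \Rightarrow (1)$; the equivalence $(3) \Leftrightarrow (4)$ also follows directly from \cref{prp:CuEquivAbSoft}, since $(\Cu(A), \Sigma_A)$ is a scaled \CuSgp{} satisfying \axiomO{5}--\axiomO{7}. For $(1) \Rightarrow (2)$, given $a \in A_+$ and $\varepsilon > 0$, take a soft $b \in \overline{aAa}$ with $(a-\varepsilon)_+ \lhd b$ from (1), use \cref{prp:BasicLhd}(1) to pick $\delta > 0$ with $(a-\varepsilon)_+ \lhd (b-\delta)_+$, and apply \cref{prp:CharSoftElement}(4) to the soft element $b$ at parameter $\delta$ to obtain $c \in \overline{bAb}_+$ with $c \perp (b-\delta)_+$ and $b \lhd c$. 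The pair $(b-\delta)_+, c \in \overline{aAa}$ then witnesses hereditary $2$-splitting.

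For $(2) \Rightarrow (4)$, given $x' \ll x$ in $\Sigma_A$, the first step is to realize something between $x'$ and $x$ by an element of $A_+$. Writing $x = [b]$ for some $b \in (A \otimes \mathcal{K})_+$ with $[b] \leq [a]$, $a \in A_+$, R{\o}rdam's lemma places $(b-\eta)_+$ in $\overline{a(A \otimes \mathcal{K})a} = \overline{aAa} \otimes \mathbb{C} e_{11}$ for every $\eta > 0$; choosing $\eta' > \eta'' > 0$ with $x' \leq [(b-\eta')_+] \ll [(b-\eta'')_+] \leq x$ and taking $a' \in A_+$ Cuntz equivalent to $(b-\eta'')_+$ gives $x' \ll [a'] \leq x$. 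Picking $\rho > 0$ with $x' \leq [(a'-\rho)_+]$ and applying hereditary $2$-splitting from (2) to $(a', \rho)$ yields orthogonal $u, v \in \overline{a'Aa'}$ with $(a'-\rho)_+ \lhd u, v$; then $y := [u]$, $z := [v]$ satisfy $y + z = [u+v] \leq [a'] \leq x$ and $x' \lhd y, z$.

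The main implication is $(3) \Rightarrow (1)$, where the strategy is to invoke \cref{prp:SoftAlgCu}. Given $a \in A_+$ and $\varepsilon > 0$, fix a strictly increasing sequence $0 < \varepsilon_0 < \varepsilon_1 < \ldots < \varepsilon/2$ and inductively build strongly soft $s_0 \gg s_1 \gg \ldots$ with $s_0 \leq [a]$ and $[(a-\varepsilon_n)_+] \lhd s_n$ for every $n$. The base $s_0$ comes from the abundance property (3) applied to $[(a-\varepsilon_0)_+] \ll [a]$. For the inductive step, exploit $[(a-\varepsilon_{n+1})_+] \ll [(a-\varepsilon_n)_+] \leq \infty s_n$ to find, via a $\ll$-approximation of $s_n$, an element $s_n' \ll s_n$ with $[(a-\varepsilon_{n+1})_+] \leq k s_n'$ for some $k \in \NN$, and then apply \cref{prp:SsoftSubCuPre} to the strongly soft $s_n \in \Sigma_A$ to obtain strongly soft $s_{n+1}$ with $s_n' \ll s_{n+1} \ll s_n$, which automatically gives $[(a-\varepsilon_{n+1})_+] \lhd s_{n+1}$. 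Since each $\varepsilon_n < \varepsilon/2$, we deduce $[(a-\varepsilon/2)_+] \leq \infty s_n$ for every $n$, and \cref{prp:SoftAlgCu} applied with the element $(a-\varepsilon/2)_+$, cut parameter $\varepsilon/2$, ambient element $c := a$, and the chain $(s_n)$ produces a soft $b \in \overline{aAa}$ with $((a-\varepsilon/2)_+ - \varepsilon/2)_+ = (a-\varepsilon)_+ \lhd b$.

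The hard part will be this inductive construction in $(3) \Rightarrow (1)$: because $[a]$ is not a compact element of $\Cu(A)$, the natural bound $[a] \lhd s_n$ need not persist when passing to a $\ll$-predecessor of $s_n$. The resolution is to settle for the weaker ideal bound $[(a-\varepsilon_n)_+] \lhd s_n$ with $\varepsilon_n \uparrow \varepsilon/2$, using the compactness of $[(a-\varepsilon_{n+1})_+]$ below its predecessor to drive the approximation, and to absorb the resulting slack in the final cut parameter when invoking \cref{prp:SoftAlgCu}. The realization of classes in $\Sigma_A$ by elements of $A_+$ in $(2) \Rightarrow (4)$ is more routine, but it is crucial for obtaining the bound $y + z \leq x$ rather than merely $y + z \leq [a]$.
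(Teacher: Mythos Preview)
Your proof follows the same cycle $(1)\Rightarrow(2)\Rightarrow(4)\Rightarrow(3)\Rightarrow(1)$ as the paper and invokes the same key ingredients (\cref{prp:CharSoftElement}, \cref{prp:BasicLhd}, \cref{prp:CuEquivAbSoft}, \cref{prp:SsoftSubCuPre}, \cref{prp:SoftAlgCu}); your $(2)\Rightarrow(4)$ spells out what the paper cites as \cite[Lemma~3.3(1)]{ThiVil22arX:Glimm}, but the content is the same.

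There is one small slip in $(1)\Rightarrow(2)$: \cref{prp:BasicLhd}(1) does \emph{not} give $(a-\varepsilon)_+ \lhd (b-\delta)_+$ from $(a-\varepsilon)_+ \lhd b$ alone, since the lemma always eats an additional cut on the left (and in general no such $\delta$ exists, e.g.\ in $C_0(0,1]$). The fix is to apply abundance at level $\tfrac{\varepsilon}{2}$, obtaining soft $b$ with $(a-\tfrac{\varepsilon}{2})_+ \lhd b$; then \cref{prp:BasicLhd}(1) yields $\delta>0$ with $(a-\varepsilon)_+ = \big((a-\tfrac{\varepsilon}{2})_+ - \tfrac{\varepsilon}{2}\big)_+ \lhd (b-\delta)_+$, which is exactly what the paper does. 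Your $(3)\Rightarrow(1)$ is correct but slightly more elaborate than needed: the paper avoids the increasing sequence $(\varepsilon_n)$ by fixing a single $s_0' \ll s_0$ with $[(a-\tfrac{2\varepsilon}{3})_+] \lhd s_0'$ and then repeatedly applying \cref{prp:SsoftSubCuPre} to $s_0' \ll s_n$, so that the same $s_0'$ stays below every $s_n$.
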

\begin{proof}
To show that~(1) implies~(2), assume that $A$ has an abundance of soft elements, and let $a\in A_+$ and $\varepsilon>0$.
We need to find positive elements $b,c \in \overline{aAa}$ such that $b \perp c$ and $(a-\varepsilon )_+ \lhd b,c$.
By assumption, there exists a soft element $d \in \overline{aAa}$ with $(a-\tfrac{\varepsilon}{2})_+ \lhd d$.
Note that
\[
(a-\varepsilon)_+
= \left( (a-\tfrac{\varepsilon}{2})_+ - \tfrac{\varepsilon}{2} \right)_+.
\]

We can therefore apply \cref{prp:BasicLhd}(1) to obtain $\delta>0$ such that
\[
(a-\varepsilon)_+ \lhd (d-\delta)_+.
\]

Using that $d$ is soft, we get from \cref{prp:CharSoftElement}(3) a positive element $c \in \overline{dAd}$ with $c \perp (d-\delta)_+$ and $d \lhd c$.
Set $b := (d-\delta)_+$.
Then $b$ and $c$ have the desired properties.

Let us show that~(2) implies~(4).
Let $x',x \in \Sigma_A$ with $x' \ll x$.
We need to find $y,z \in \Cu(A)$ with $y+z \leq x$ and $x' \lhd y,z$.

By \cite[Lemma~3.3(1)]{ThiVil22arX:Glimm}, we obtain $a \in A_+$ such that $x' \ll [a] \ll x$.
Choose $\varepsilon>0$ such that $x' \ll [(a-\varepsilon)_+]$.
By assumption, we obtain orthogonal positive elements $b,c \in \overline{aAa}$ with $(a-\varepsilon)_+ \lhd b,c$.
Then $y := [b]$ and $z := [c]$ have the desired properties.

Since Cuntz semigroups of \ca{s} satisfy \axiomO{5}-\axiomO{7}, it follows from \cref{prp:CuEquivAbSoft} that~(4) implies~(3).
Finally, to show that~(3) implies~(1), assume that $(\Cu(A),\Sigma_A)$ has an abundance of strongly soft elements and take $a \in A_+$ and $\varepsilon > 0$.
We need to find a soft element $b \in \overline{aAa}$ with $(a-\varepsilon)_+ \lhd b$.

Using that $(\Cu(A),\Sigma_A)$ has an abundance of strongly soft elements for $[a] \in \Sigma_A$ and $[(a- \tfrac{\varepsilon}{3})_+] \ll [a]$, we obtain a strongly soft $s_0 \in \Cu(A)$ such that
\[
[(a- \tfrac{\varepsilon}{3})_+] \lhd s_0 \leq [a].
\]
Since $[(a- \tfrac{2\varepsilon}{3})_+] \ll [(a- \tfrac{\varepsilon}{3})_+]$, we can choose $s_0' \in \Cu(A)$ such that
\[
[(a- \tfrac{2\varepsilon}{3})_+] \lhd s_0' \ll s_0.
\]
Applying \cref{prp:SsoftSubCuPre} for $s_0' \ll s_0$, we obtain a strongly soft $s_1 \in \Cu(A)$ with $s_0' \ll s_1 \ll s_0$.
Inductively, we obtain a sequence $(s_n)_n$ of strongly soft elements in $\Cu(A)$ such that 
\[
\ldots \ll s_2 \ll s_1 \ll s_0 \leq [a],
\]
and such that $[(a- \tfrac{2\varepsilon}{3})_+] \lhd s_0' \lhd s_n$ for all $n$.

Applying \cref{prp:SoftAlgCu} (with $\tfrac{\varepsilon}{3}$), we obtain a soft element $b \in \overline{aAa}$ such that 
\[
[(a-\varepsilon)_+]
= [((a- \tfrac{2\varepsilon}{3})_+-\tfrac{\varepsilon}{3})_+]
\lhd b.
\]
as desired.
\end{proof}

\begin{cor}
\label{prp:EquivAbundanceStable}
A stable \ca{} has an abundance of soft elements if and only if its Cuntz semigroup has an abuncande of strongly soft elements.
\end{cor}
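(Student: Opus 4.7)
My plan is to deduce the corollary as a direct consequence of \cref{prp:EquivAbundance}, the only subtlety being the reconciliation between the scaled and unscaled versions of \emph{abundance of strongly soft elements}.

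The first step is to observe that for a stable \ca{} $A$, the natural scale $\Sigma_A$ equals all of $\Cu(A)$. Indeed, fixing an isomorphism $A \cong A\otimes\mathcal{K}$, every positive element of $A\otimes\mathcal{K}$ can already be realized (up to isomorphism) as a positive element of $A$. Hence every Cuntz class $x\in\Cu(A)$ has a representative $a\in A_+$, which means $x = [a] \leq [a]$ and $x\in\Sigma_A$. Therefore $\Sigma_A = \Cu(A)$.

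The second step is to unwind \cref{dfn:AbSoft} in this setting. Once $\Sigma_A = \Cu(A)$, the condition that the scaled \CuSgp{} $(\Cu(A),\Sigma_A)$ has an abundance of strongly soft elements asks: for every $x',x \in \Cu(A)$ with $x'\ll x$, there exists a strongly soft $y\in \Cu(A)$ with $x'\lhd y\leq x$. This is exactly the statement that $\Cu(A)$ (viewed without reference to any scale, as in the remark following \cref{dfn:AbSoft}) has an abundance of strongly soft elements.

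Combining these observations, the equivalence $(1)\Leftrightarrow(3)$ of \cref{prp:EquivAbundance} yields the corollary: $A$ has an abundance of soft elements if and only if $(\Cu(A),\Sigma_A)$ has an abundance of strongly soft elements, if and only if $\Cu(A)$ has an abundance of strongly soft elements. There is no real obstacle here beyond confirming that stability forces $\Sigma_A = \Cu(A)$, which is immediate; the substantive content is packaged entirely into \cref{prp:EquivAbundance}.
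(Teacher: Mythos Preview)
Your proof is correct and matches the paper's intended approach: the corollary is stated without proof immediately after \cref{prp:EquivAbundance}, and the only content is precisely the observation that $\Sigma_A = \Cu(A)$ when $A$ is stable, so that the scaled and unscaled formulations of abundance coincide.
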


Let $A$ be a (nonstable) \ca{}.
If $A\otimes\KK$ has an abundance of soft elements, then so does $A$.
Using that $\Cu(A)\cong\Cu(A\otimes\KK)$, we deduce that $A$ has an abundance of soft elements whenever its Cuntz semigroup does. 
The converse remains unclear:

\begin{qst}
Let $A$ be a \ca.
If a $A$ has an abundance of soft elements, does its stabilization as well?
\end{qst}

With view towards \cref{prp:EquivAbundance}, the above question is equivalent to the following:
If $A$ is a \ca{} such that $(\Cu(A),\Sigma_A)$ has an abundance of strongly soft elements, does $\Cu(A)$ have an abundance of strongly soft elements?
One can also ask this question in the more abstract setting of scaled \CuSgp{s} that satisfy additional properties, like \axiomO{5}-\axiomO{8}.

\section{Completely soft elements in C*-algebras}

We introduce completely soft operators as those positive elements in a \ca{} whose every cut-down is soft;
see \cref{dfn:CompSoft}.
We characterize these elements by spectral properties;
see \cref{prp:CharCompSoft}.
The main result of this section is \cref{prp:EquivAbundanceSoft}, where we show that a \ca{} has an abundance of such elements whenever it has an abundance of soft elements.
To construct completely soft elements in a \ca{}, we follow a strategy introduced in \cite{Thi20arX:diffuseHaar} and view positive elements as suitable `paths' of open projections.

\begin{dfn}
\label{dfn:CompSoft}
We say that a positive element $a$ in a \ca{} is \emph{completely soft} if for every $\varepsilon \geq 0$ the element $(a-\varepsilon )_+$ is soft.
\end{dfn}

\begin{prp}
\label{prp:CharCompSoft}
Let $A$ be a \ca{}, and $a \in A_+$.
Then the following are equivalent:
\begin{enumerate}
\item
The element $a$ is completely soft.
\item
For every closed ideal $I \subseteq A$, the spectrum of $a+I \in A/I$ is $[0,\|a+I\|]$.
\item
For every closed ideal $I \subseteq A$, the spectrum of $a+I \in a/I$ is connected and contains $0$.
\end{enumerate}
\end{prp}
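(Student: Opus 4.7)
The plan is to first dispose of (2) $\Leftrightarrow$ (3), which is essentially tautological: a closed connected subset of $\mathbb{R}$ is a closed interval, and $\spec(a+I)$ is always contained in $[0, \|a+I\|]$ with $\|a+I\|$ achieved in the spectrum, so if $\spec(a+I)$ is connected and contains $0$ it must equal $[0, \|a+I\|]$. The substantive content lies in the equivalence of (1) with (2), which I would establish using the spectral characterization of softness from \cref{prp:CharSoftElement}(2).

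For (2) $\Rightarrow$ (1), I would fix $\varepsilon \geq 0$ and a closed ideal $I \subseteq A$ and verify that $(a-\varepsilon)_+$ satisfies condition (2) of \cref{prp:CharSoftElement}. If $(a-\varepsilon)_+ \in I$ there is nothing to check; otherwise $\|a+I\| > \varepsilon$, and the hypothesis $\spec(a+I) = [0, \|a+I\|]$ together with functional calculus gives $\spec((a-\varepsilon)_+ + I) = [0, \|a+I\|-\varepsilon]$, a nondegenerate interval containing $0$, so $0$ is a limit point, as required.

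For (1) $\Rightarrow$ (2), I would first reduce to the case $I = \{0\}$: by \cref{prp:SoftImageInQuotient} each $(a-\varepsilon)_+ + I = ((a+I)-\varepsilon)_+$ is soft in $A/I$, so $a+I$ is again completely soft, and it therefore suffices to show that a completely soft positive element has spectrum $[0, \|a\|]$. Taking $\varepsilon = 0$, the element $a$ itself is soft, so assuming $a \neq 0$, \cref{prp:CharSoftElement}(2) applied with the zero ideal forces $0 \in \spec(a)$ as a limit point. For each $\varepsilon \in [0, \|a\|)$ the cut-down $(a-\varepsilon)_+$ is soft and nonzero, so \cref{prp:CharSoftElement}(2) again gives that $0$ is a limit point of $\spec((a-\varepsilon)_+)$. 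Since $\spec((a-\varepsilon)_+)$ is the image of $\spec(a)$ under $t \mapsto \max(t-\varepsilon, 0)$, this translates to the statement that $\varepsilon$ is an accumulation point of $\spec(a) \cap (\varepsilon, \infty)$ from the right. If some $s \in (0, \|a\|)$ were missing from $\spec(a)$, closedness of the spectrum would yield $\delta > 0$ with $(s, s+\delta) \cap \spec(a) = \emptyset$, contradicting that $s$ is a right-limit point; hence $[0, \|a\|) \subseteq \spec(a)$, and closedness together with $\|a\| \in \spec(a)$ give the equality.

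I expect the only real technical point to be the translation in the previous paragraph between ``$0$ is a limit point of $\spec((a-\varepsilon)_+)$'' and ``$\varepsilon$ is a right-limit point of $\spec(a)$'', which is a direct consequence of the functional-calculus description of the spectrum of $(a-\varepsilon)_+$, but is the one place in the argument where one has to reason about the geometry of $\spec(a) \subseteq \mathbb{R}$ rather than simply invoke \cref{prp:CharSoftElement}.
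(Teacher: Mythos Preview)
Your proof is correct and follows essentially the same approach as the paper: both rely on \cref{prp:CharSoftElement}(2) together with the spectral-mapping translation between ``$0$ is a limit point of $\spec((a-\varepsilon)_+)$'' and ``$\varepsilon$ lies in the closure of $\spec(a)\cap(\varepsilon,\infty)$''. The only organizational difference is that the paper runs this translation as a single biconditional uniformly over all ideals $I$ and all $\varepsilon\geq 0$, whereas you split into two implications and, for (1)$\Rightarrow$(2), first reduce to $I=\{0\}$ via \cref{prp:SoftImageInQuotient}; this reduction is not needed but does no harm.
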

\begin{proof}
Let $\varepsilon \geq 0$, let $I \subseteq A$ be a closed ideal, and let $\pi_I \colon A \to A/I$ denote the quotient map.
Then $(\pi_I(a)-\varepsilon)_+ = \pi_I((a-\varepsilon)_+)$.
Using the spectral mapping theorem (see, for example, \cite[Proposition~II.2.3.2]{Bla06OpAlgs}), we see that $\varepsilon$ is contained in the closure of $\spec(\pi_I(a))\cap(\varepsilon,\infty)$ if and only if $0$ is not isolated in the spectrum of $(\pi_I(a)-\varepsilon)_+$.

By \cref{prp:CharSoftElement}, $a$ is completely soft if and only if for every $\varepsilon \geq 0$ and every closed ideal $I \subseteq A$, the element $\pi_I((a-\varepsilon)_+)$ is either zero or $0$ is not isolated in its spectrum.
Using the above considerations, this is in turn equivalent to~(2), which is easily seen to also be equivalent to~(3).
\end{proof}

An immediate consequence of \cref{prp:CharCompSoft} is that images of completely soft elements in quotients are again completely soft, which is analogous to \cref{prp:SoftImageInQuotient}.
It remains unclear if the analog of \cref{prp:OrderZeroMapPresSoft} also holds:

\begin{qst}
Let $\varphi \colon A \to B$ be a completely positive, order-zero map between \ca{s}.
Given a completely soft element $a \in A_+$, is $\varphi(a)$ completely soft?
\end{qst}

\begin{pgr}
\label{pgr:OpenProj}
Let $A$ be a \ca{}. 
Recall that a projection $p\in A^{**}$ is said to be \emph{open} if it is the weak*-limit of an increasing net in $A_+$, and denote by $\OP(A)$ the set of all open projections in $A$. 
The sub-\ca{} $pA^{**}p\cap A$ is hereditary for every open projection $p$ and, conversely, every hereditary sub-\ca{} of $A$ is of this form for some unique open projection of $A$; 
see \cite[p.77f]{Ped79CAlgsAutGp}.
 
As in \cite[Section~2]{Thi20arX:diffuseHaar}, we define the relation $\prec$ on $\OP(A)$ by setting $p\prec q$ if there exists $a\in A_+$ such that $p\leq a\leq q$. 
That is, if there exists $a\in A_+$ such that $p=pa$ and $a=aq$. 
We say that $p\in\OP(A)$ is \emph{soft} if the associated hereditary sub-\ca{} $A_p:=pA^{**}p\cap A$ is soft. 
We use $\supp (a)$ to denote the support projection of any positive element $a\in A_+$.
Note that $\supp(a)$ belongs to $\OP(A)$.

Following \cite[Definition~2.4]{Thi20arX:diffuseHaar}, we say that a map $f \colon (-\infty,0] \to \OP(A)$ is a \emph{path} if $f(s) = \sup\{f(s') : s' < s \}$ for all $s \in (-\infty,0]$, and if $f(s) \prec f(t)$ whenever $s<t$ in $(-\infty,0]$.

Given a positive contraction $a \in A$, the map $f_a \colon (-\infty,0] \to \OP(A)$ given by $f_a(t) := \supp((a+t)_+)$ is a path with $f_a(-1)=0$.
Conversely, by \cite[Proposition~2.6]{Thi20arX:diffuseHaar}, every path $f$ with $f(-1)=0$ arises this way from a positive contraction.
\end{pgr}

\begin{lma}
\label{prp:InterpolateSoft}
Let $A$ be a \ca{} that has an abundance of soft elements, let $p,q\in\OP(A)$ satisfy $p \prec q$, and assume that $q$ is soft.
Then there exists $r\in\OP(A)$ such that $r$ is soft and $p \prec r \prec q$.
\end{lma}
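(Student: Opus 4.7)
Pick $a \in A_+$ with $p \leq a \leq q$ and $\|a\| \leq 1$; since $q$ is a projection, $ap = pa = p$, and on the range of $p$ the functional calculus gives $f(a)p = f(1)p$ for every continuous $f$. In particular, $(a-\tfrac{1}{2})_+$ acts as $\tfrac{1}{2}p$ there. The plan is to construct a soft element $b$ inside the hereditary sub-\ca{} $B := \overline{(a-\tfrac{1}{4})_+A(a-\tfrac{1}{4})_+} \subseteq A_q$ dominating a suitable cut-down of $(a-\tfrac{1}{2})_+$; then the support projection $r := \supp(b)$ will be soft with $r \leq \supp((a-\tfrac{1}{4})_+)$, making $d := \min(4a,1) \in A_q$ a natural witness for $r \prec q$, while a rescaled cut-down of $(a-\tfrac{1}{2})_+$ serves as the witness for $p \prec r$.

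The element $b$ will be an orthogonal sum $b := ((a-\tfrac{1}{2})_+ - \varepsilon)_+ + c'$, where $\varepsilon > 0$ is small and $c' \in B$ is soft, orthogonal to $(a-\tfrac{1}{2})_+$, and satisfies $((a-\tfrac{1}{2})_+ - \varepsilon)_+ \lhd c'$ in $A$. To produce $c'$, apply \cref{prp:CharSoftAlg}(3) inside the soft \ca{}~$A_q$ to the element $(a-\tfrac{1}{2})_+ \in (A_q)_c$, obtaining $c \in (A_q)_+$ with $c \perp (a-\tfrac{1}{2})_+$ and $(a-\tfrac{1}{2})_+ \lhd c$ in $A_q$; \cref{prp:AnnSoftSubalg} provides extra flexibility in the choice of $c$. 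The functional-calculus factorisation $(a-\tfrac{1}{2})_+ = (a-\tfrac{1}{4})_+^{1/2} f(a) (a-\tfrac{1}{4})_+^{1/2}$ for a suitable continuous $f \geq 0$ supports a cut-down argument replacing $c$ by $\tilde c := (a-\tfrac{1}{4})_+^{1/2} c (a-\tfrac{1}{4})_+^{1/2} \in B$, which remains orthogonal to $(a-\tfrac{1}{2})_+$ and, after applying \cref{prp:BasicLhd} with $\varepsilon > 0$ small, satisfies $((a-\tfrac{1}{2})_+ - \varepsilon)_+ \lhd \tilde c$. Finally, apply the abundance of soft elements in $A$ to $\tilde c$ to produce a soft $c' \in \overline{\tilde c A \tilde c} \subseteq B$ with $(\tilde c - \delta)_+ \lhd c'$, where $\delta$ is chosen small enough (again via \cref{prp:BasicLhd}) that $((a-\tfrac{1}{2})_+ - \varepsilon)_+ \lhd c'$.

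Softness of $b$ will follow from \cref{prp:CharSoftElement}(3): writing $g_\mu(b) = g_\mu(((a-\tfrac{1}{2})_+ - \varepsilon)_+) + g_\mu(c')$ by orthogonality, softness of $c'$ gives $c' \lhd g_\mu(c') \leq g_\mu(b)$, and combined with $((a-\tfrac{1}{2})_+ - \varepsilon)_+ \lhd c'$ this yields $b \lhd g_\mu(b)$ for every $\mu > 0$. The positive contraction $c_0 := (\tfrac{1}{2} - \varepsilon)^{-1} ((a-\tfrac{1}{2})_+ - \varepsilon)_+ \in \overline{bAb}$ then witnesses $p \prec r$, using that $\|(a-\tfrac{1}{2})_+\| = \tfrac{1}{2}$ and $c_0 = p$ on the range of $p$. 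The main technical obstacle is verifying that the compressed element $\tilde c$ retains the ideal-generation relation $((a-\tfrac{1}{2})_+ - \varepsilon)_+ \lhd \tilde c$; this requires combining the functional-calculus factorisation with a Rørdam-type approximation, exploiting that $(a-\tfrac{1}{4})_+^{1/2}$ is bounded below by a positive constant on the spectral range of the relevant cut-downs of $(a-\tfrac{1}{2})_+$.
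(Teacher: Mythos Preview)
Your overall strategy mirrors the paper's: build $r$ as the support of a sum $x + e$ where $x$ is a function of $a$ dominating $p$, $e$ is soft, $x \perp e$, and $x \lhd e$. The genuine gap is the compression step. You take $c \in (A_q)_+$ with $c \perp (a-\tfrac12)_+$ and $(a-\tfrac12)_+ \lhd c$, then pass to $\tilde c := (a-\tfrac14)_+^{1/2}\, c\, (a-\tfrac14)_+^{1/2}$ and assert $((a-\tfrac12)_+ - \varepsilon)_+ \lhd \tilde c$. But nothing in the construction prevents $c \perp (a-\tfrac14)_+$, in which case $\tilde c = 0$. Concretely: take $A_q = \KK \otimes C_0(0,1]$ (simple, nonunital, hence soft), $a = e_{11} \otimes \mathrm{id}$, and $c = e_{22} \otimes \mathrm{id}$; then $c \perp a$, the ideal generated by $c$ is all of $A_q$ so $(a-\tfrac12)_+ \lhd c$, yet $\tilde c = 0$. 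Your sketched justification --- that $(a-\tfrac14)_+^{1/2}$ is bounded below on the relevant spectral range --- only lets you factor $(a-\tfrac14)_+^{1/2}$ out of \emph{functions of $a$}; in an approximation $(a-\tfrac12)_+ \approx \sum_j r_j^* c r_j$ the compressions land on the outside of each $r_j^* c r_j$, not between $c$ and the $r_j$'s, so you do not produce elements of the ideal generated by $\tilde c$.

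The paper avoids this entirely by \emph{not} forcing the soft piece into $B = \overline{(a-\tfrac14)_+ A (a-\tfrac14)_+}$. It applies \cref{prp:CharSoftAlg} to $f_{1/2}(a)$ to get $d \in A_q$ with $d \perp f_{1/2}(a)$ and $f_{1/2}(a)\lhd d$, cuts $d$ down, and then uses abundance of soft elements to obtain a soft $e \in \overline{(d-\tfrac{\delta}{2})_+ A (d-\tfrac{\delta}{2})_+}$. The witness for $r \prec q$ is then $f_{1/2}(a) + f_{\delta/2}(d)$, a sum of two \emph{orthogonal} contractions in $A_q$: since $e \ll f_{\delta/2}(d)$ and $f_1(a) \ll f_{1/2}(a)$, one gets $r=\supp(f_1(a)+e) \leq f_{1/2}(a) + f_{\delta/2}(d) \leq q$. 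In other words, the paper lets the orthogonal complement $d$ carry its own witness for $\prec q$, so no interaction between the soft element and $(a-\tfrac14)_+$ is ever required.
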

\begin{proof}
Given any $\varepsilon >0$, let $f_\varepsilon\colon [0,\infty )\to [0,1]$ be the map that takes the value $0$ in $[0,\varepsilon /2]$, $1$ in $[\varepsilon ,\infty )$, and is linear from $\varepsilon /2$ to $\varepsilon$.

Since $p\prec q$, there exists $a\in A_+$ such that $p\leq a\leq q$. Thus, one has 
\[
 p\leq f_1 (a)\ll f_{1/2} (a)\ll f_{1/4}(a)\leq q.
\]

Using that $f_{1/4}(a)\in A_q$, it follows that there exists an element in $(A_q)_+$ that acts as a unit for $f_{1/2} (a)$.
By \cref{prp:CharSoftAlg}, we obtain a positive element $d\in A_q$ with $f_{1/2} (a)\lhd d\perp f_{1/2} (a)$.
Applying \cref{prp:BasicLhd}(1), we obtain $\delta>0$ such that $(f_{1/2} (a)-\tfrac{1}{2})_+ \lhd (d-\delta)_+$. Note that one has $f_1(a)\lhd (f_{1/2} (a)-\tfrac{1}{2})_+$.

Applying that $A$ has an abundance of soft elements for $(d-\tfrac{\delta}{2})_+$ and $\tfrac{\delta}{2}$, we obtain a soft element $e$ such that
\[
\big( (d-\tfrac{\delta}{2})_+ - \tfrac{\delta}{2} \big)_+ \lhd e, \andSep
e \in \overline{(d-\tfrac{\delta}{2})_+ A (d-\tfrac{\delta}{2})_+},
\] 
and we may assume that $e$ is contractive.

Then, one has
\[
f_1 (a)
\lhd (f_{1/2} (a)-\tfrac{1}{2})_+
\lhd (d-\delta)_+
= \big( (d-\tfrac{\delta}{2})_+ - \tfrac{\delta}{2} \big)_+ 
\lhd e.
\]

Further, using that $f_1 (a)\perp e$ and $f_1 (a)\lhd e$, it follows that $f_1 (a)+e$ is soft and, consequently, that the open projection $r:=\supp(f_1 (a)+e)$ is also soft.
Since $f_1 (a)+e$ is contractive, we have $p\leq f_1 (a) \leq f_1 (a)+e\leq\supp(f_1 (a)+e)=r$ and thus $p \prec r$.

We also have $e \in \overline{(d-\tfrac{\delta}{2})_+ A (d-\tfrac{\delta}{2})_+}$ and $(d-\tfrac{\delta}{2})_+ \ll f_{\delta/2}(d)$, and therefore $e \ll f_{\delta/2}(d)$.
We further have $f_1 (a) \ll f_{1/2} (a)$, and $f_{1/2} (a)\perp f_{\delta/2}(d)$.
This implies that
\[
f_1 (a)+e \ll f_{1/2} (a)+f_{\delta/2}(d).
\]

Therefore,
\[
r=\supp(f_1 (a)+e) \leq f_{1/2} (a)+f_{\delta/2}(d)\leq q,
\]
and thus $r\prec q$.
\end{proof}

In preparation for the proof of \cref{prp:WeakInterpolation}, we recall a result of Bice and Koszmider from \cite{BicKos19LLApproxUnits}.
Following \cite[Deﬁnition~2.1]{BicKos19LLApproxUnits}, we write $a \ll_\varepsilon b$ for positive elements $a$ and $b$ in a \ca{} and $\varepsilon>0$ if $\|a-ab\| < \varepsilon$.

\begin{thm}[{\cite[Corollary~4.2]{BicKos19LLApproxUnits}}]
\label{prp:BicKos}
For all $\varepsilon>0$ there exists $\delta>0$ such that the following holds:
If $a,b,c,d$ are positive contractions in a \ca{} $A$ such that
\[
a \ll b \ll_\delta c \ll d
\]
then there exists a unitary $u \in \widetilde{A}$ such that
\[
uau^* \ll d, \andSep
\| u - 1 \| < \varepsilon.
\]
\end{thm}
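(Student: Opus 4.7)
The approach is to reduce the theorem to the following intermediate claim: for every $\varepsilon>0$ there exists $\delta>0$ such that whenever $b,c$ are positive contractions in a \ca{} $A$ with $\|b-bc\|<\delta$, there is a unitary $u\in\widetilde{A}$ with $\|u-1\|<\varepsilon$ and $ubu^*\in\overline{cAc}$. Granted this claim, the theorem follows quickly: from $c\ll d$ we have $c(1-d)=0$ in $A^{**}$, whence $c^{1/2}(1-d)^2 c^{1/2}=0$, so $(1-d)c^{1/2}=0$; iterating gives $(1-d)c^{1/2^n}=0$ and, passing to the strong limit, $p(1-d)=0$, where $p:=\mathrm{supp}(c)$. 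Thus every element of $\overline{cAc}$ is absorbed by $d$, so in particular $ubu^*\ll d$. Combining with $a=ab$ (from $a\ll b$), one computes
\[
(uau^*)d = u(ab)u^*\,d = (uau^*)(ubu^*)d = (uau^*)(ubu^*) = uau^*,
\]
which is precisely $uau^*\ll d$.

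For the intermediate claim, I would follow a polar-decomposition strategy. Set $\tilde b:=c^{1/2}bc^{1/2}\in\overline{cAc}$. A direct computation from $\|b-bc\|<\delta$ yields $\|(1-c)b(1-c)\|<2\delta$, from which $\|b-\tilde b\|$ is controlled by a power of $\delta$ via standard square-root estimates. The element $x:=c^{1/2}b^{1/2}\in A$ then satisfies $xx^*=\tilde b$, while $\|x-b^{1/2}\|$ is small. Polar-decomposing $x=v|x|$ in $A^{**}$ produces a partial isometry $v$ whose final projection equals $\mathrm{supp}(\tilde b)\leq\mathrm{supp}(c)=p$, so that $vbv^*$ is supported in $p$ when viewed in $A^{**}$. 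To realize this by an honest unitary $u\in\widetilde{A}$, I would extract a small self-adjoint element $h\in A$ (for instance from a regularized logarithm of a norm-continuous functional-calculus approximation to the unitary part of $x$, built out of $b^{1/2}$ and $c$) and set $u:=\exp(ih)$; the norm estimate $\|u-1\|=O(\|h\|)$ would then be controlled by a power of $\delta$.

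The main obstacle is this last lifting step: producing a genuine unitary in $\widetilde{A}$ (not merely a partial isometry in the bidual) that achieves \emph{exact} absorption $ubu^*\in\overline{cAc}$, while keeping $\|u-1\|$ controlled uniformly by a function of $\delta$ alone. A single application of the exponential of $h$ is likely to give only an approximate version, i.e.\ a new $b':=ubu^*$ with $\|b'-b'c\|$ strictly smaller than $\delta$ but still nonzero. An iterative refinement, in the spirit of the approximate-identity techniques developed by Bice and Koszmider, then appears to be necessary: one alternates between a unitary correction that reduces the defect $\|b_n-b_n c\|$ by a definite geometric factor and a redefinition of the target inside $\overline{cAc}$. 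The resulting products of corrections converge to a unitary $u$ with the required exact properties, provided the bookkeeping of errors and of $\|u_n-1\|$ survives the limit, which is the technical content of~\cite[Corollary~4.2]{BicKos19LLApproxUnits}.
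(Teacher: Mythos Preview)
The paper does not prove this result: it is quoted from \cite[Corollary~4.2]{BicKos19LLApproxUnits} and used as a black box in the proof of \cref{prp:WeakInterpolation}. There is therefore no argument in the paper to compare your sketch against, and your closing sentence --- which defers the hard step back to \cite[Corollary~4.2]{BicKos19LLApproxUnits} --- is in effect the same move the paper makes, except that it is circular when offered as a proof of that very corollary.

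More seriously, the intermediate claim you reduce to is false as stated. In a commutative \ca{} every unitary satisfies $ubu^*=b$, so your claim would force $b\in\overline{cAc}$ whenever $\|b-bc\|<\delta$; but in $C([0,1])$ one may take $b$ to be the constant function $\delta/2$ and $c$ any positive contraction supported in $[0,3/4]$, giving $\|b(1-c)\|\leq\delta/2$ while $b\notin\overline{cAc}$. The two ``exact'' relations $a\ll b$ and $c\ll d$ in the Bice--Koszmider statement are not decoration that can be stripped away: they supply the slack that makes the conclusion attainable (indeed, in the commutative case one checks directly that $a\ll b\ll_\delta c\ll d$ already forces $a\ll d$, so the theorem holds with $u=1$). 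Your reduction discards this slack and asks for a strictly stronger statement, so the argument cannot be completed along the lines you propose. Any genuine proof must keep both outer relations in play throughout the construction of $u$.
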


Given open projections $p,q,r \in \OP(A)$ such that $p,q \prec r$, note that generally there is no open projection $r'$ satisfying $p,q \prec r' \prec r$. 
However, by using results from \cite{BicKos19LLApproxUnits}, we  prove in \cref{prp:WeakInterpolation} below that an approximate version of this statement does hold.

We write $p \prec_\varepsilon q$ if there exists $a\in A_+$ such that $\| p - pa \| < \varepsilon$ and $a\leq q$.

\begin{lma}
\label{prp:WeakInterpolation}
Let $p,q,r$ be open projections in a \ca{} $A$ such that $p,q\prec r$.
Take $\varepsilon>0$. 
Then there exists $r'\in\OP (A)$ such that $p \prec r' \prec  r$ and $q \prec_\varepsilon r'$.
\end{lma}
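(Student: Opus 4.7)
My plan is to choose positive contractions $a,b \in A_+$ witnessing $p\leq a\leq r$ and $q\leq b\leq r$, and to build $r'$ as the support projection of a cut-down of a carefully chosen element $y\in A_r$ that satisfies $yp = p$ exactly. The natural candidate is
\[
y := a + (1-a)^{1/2}b(1-a)^{1/2},
\]
computed in $\widetilde{A}$; writing $(1-a)^{1/2} = 1 - g(a)$ for a continuous $g$ with $g(0)=0$ shows $y\in A$. A short calculation shows that $y$ is a positive contraction in $A_r$ (using that $(1-a)$ commutes with $r$ to give $(1-a)^{1/2}r(1-a)^{1/2} = r-a$, hence $y\leq a + (r-a) = r$), and the identity $(1-a)^{1/2}p = 0$ (which follows from $p(1-a)p = 0$) yields $yp = ap = p$.

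Fix a small $\beta>0$ and set $r' := \supp((y-\beta)_+)$. Let $f_\beta$ denote the continuous function equal to $1$ on $[\beta,\infty)$, $0$ on $[0,\beta/2]$, and linear in between. The element $d := f_\beta(y) \in A_r$ is a positive contraction acting as a unit on $(y-\beta)_+$, giving $r'\leq d\leq r$, hence $r'\prec r$. For $p\prec r'$, take $c := (1-\beta)^{-1}(y-\beta)_+$: the identity $yp = p$ gives $cp = p$, so $p\leq c$, and $c\leq r'$ since $c\in\overline{(y-\beta)_+A(y-\beta)_+}$.

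For $q\prec_\varepsilon r'$, the crucial structural fact is that $b\in\overline{yAy}$: if $v\in\ker(a)\cap\ker((1-a)^{1/2}b(1-a)^{1/2})$, then $(1-a)^{1/2}v = v$ and $(1-a)^{1/2}bv = 0$, so $bv\in\ker(1-a)$; since $\ker(a)\perp\ker(1-a)$, we get $0 = \langle v,bv\rangle = \|b^{1/2}v\|^2$, hence $bv = 0$ and $\supp(b)\leq\supp(y)$. Applying \cref{prp:BicKos} with input $\varepsilon/3$ yields $\delta>0$; fix $\alpha$ with $3\alpha < \varepsilon/3$ and choose $\beta$ small enough that $\|f_{3\alpha}(b) - f_{3\alpha}(b)f_{4\beta}(y)\|<\delta$ (possible since $f_{3\alpha}(b)\in\overline{yAy}$ and $f_{4\beta}(y)$ is an approximate right-unit of $\overline{yAy}$ as $\beta\to 0$). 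The resulting Bice--Koszmider chain
\[
(b-3\alpha)_+ \;\ll\; f_{3\alpha}(b) \;\ll_\delta\; f_{4\beta}(y) \;\ll\; f_{2\beta}(y)
\]
produces a unitary $u\in\widetilde{A}$ with $\|u-1\|<\varepsilon/3$ and $u(b-3\alpha)_+u^* \ll f_{2\beta}(y)$. Since $\supp(f_{2\beta}(y)) = r'$, the element $e := u(b-3\alpha)_+u^*$ satisfies $e \leq r'$; and the triangle inequality with $qb = q$ gives $\|q - qe\| \leq 3\alpha + 2\|u-1\| < \varepsilon$. The main structural obstacle is establishing $b\in\overline{yAy}$; the rest is parameter bookkeeping.
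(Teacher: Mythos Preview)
Your proof is correct and takes a genuinely different route from the paper's.

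The paper proceeds symmetrically: starting from $p\leq a'\ll a\leq r$ and $q\leq b'\ll b\leq r$, it picks (via an approximate unit in $A_r$) elements $c'\ll c$ in $A_r$ with $a\ll_\delta c'$ and $b\ll_\delta c'$, then applies Bice--Koszmider to the chain $a'\ll a\ll_\delta c'\ll c$ to obtain a unitary $u$ close to $1$ with $ua'u^*\ll c$, and sets $r':=\supp(u^*cu)$. Thus the paper perturbs on the $p$-side to make $p\prec r'$ exact, while $q\prec_\varepsilon r'$ follows from the approximate relation $b\ll_\delta c'$ together with $\|u-1\|<\varepsilon/3$.

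Your argument is asymmetric and more constructive: the explicit element $y=a+(1-a)^{1/2}b(1-a)^{1/2}$ is built so that $yp=p$ holds \emph{on the nose}, giving $p\prec r'=\supp((y-\beta)_+)\prec r$ for every $\beta>0$ without any perturbation. The price is the Hilbert-space verification that $\supp(b)\leq\supp(y)$ (your key structural step), after which Bice--Koszmider is invoked on the $q$-side, via the chain $(b-3\alpha)_+\ll f_{3\alpha}(b)\ll_\delta f_{4\beta}(y)\ll f_{2\beta}(y)$, to produce the witness $e\leq r'$ for $q\prec_\varepsilon r'$.

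What each approach buys: the paper's argument avoids the spectral/kernel computation and treats $p$ and $q$ on equal footing, at the cost of invoking an approximate unit and leaving the ``large'' element $c$ implicit. Your approach gives an explicit formula for the dominating element and makes the exact relation $p\prec r'$ automatic; the $\supp(b)\leq\supp(y)$ step is an independently useful observation. Both rely on \cref{prp:BicKos} in an essential way, just applied to opposite sides of the problem.
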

\begin{proof}
Using functional calculus, we find positive, contractive elements $a',a,b',b$ in~$A$ such that 
\[
p \leq a' \ll a \leq r, \andSep
q \leq b' \ll b \leq r.
\]

Fix $\delta>0$ such that the statement in \cref{prp:BicKos} is satisfied for $\varepsilon/3$ and $\delta$, where note that we may assume $\delta\leq\varepsilon/3$. Using functional calculus and an approximate unit in $A_r$, we find positive, contractive elements $c',c \in A$ such that
\[
a \ll_\delta c' \ll c, \andSep
b \ll_\delta c' \ll c.
\]

Now, by \cref{prp:BicKos}, there exists a unitary $u \in \widetilde{A_r}$ such that
\[
ua'u^* \ll c, \andSep
\| 1 - u \| < \frac{\varepsilon}{3}.
\]

Then
\[
a' \ll u^*cu, \andSep
b \ll_\delta c' \ll_{2\varepsilon/3} u^*cu,
\]
and therefore $b \ll_\varepsilon u^*cu$.
Now $r' := \supp(u^*cu)$ has the desired properties.
\end{proof}

\begin{lma}
\label{prp:SupSoftOp}
Let $A$ be a \ca{} and let $(p_j)_j$ be a $\prec$-directed family of soft open projections in $A$. 
Then the open projection $\sup_j p_j$ is soft.
\end{lma}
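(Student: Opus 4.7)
The plan is to realize $A_p$, where $p := \sup_j p_j$, as an inductive limit of the soft hereditary sub-C*-algebras $A_{p_j}$, and then to invoke \cref{prp:PermApprox}.

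First, I note that $p \prec q$ implies $p \leq q$, so the $\prec$-directed family $(p_j)_j$ is also upward directed with respect to the usual projection order. The supremum $p := \sup_j p_j$ of an upward directed family of open projections is itself open, and $p_j \leq p_k$ yields the inclusion $A_{p_j} \subseteq A_{p_k}$. I would then set $B := \overline{\bigcup_j A_{p_j}}$ and show that $B = A_p$. Because each $A_{p_j}$ is hereditary and the family is $\leq$-directed, for $x, y \in \bigcup_j A_{p_j}$ and $a \in A$ one can pick $j$ with $x, y \in A_{p_j}$ and use $A_{p_j} A A_{p_j} \subseteq A_{p_j}$ to conclude $xay \in \bigcup_j A_{p_j}$; it follows that $B$ is itself hereditary in $A$.

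To identify its open projection $q$, observe that $A_{p_j} \subseteq B$ forces $p_j \leq q$ for every $j$ (the support projection of any hereditary subalgebra of $B$ is dominated by $q$), hence $p \leq q$; and conversely $B \subseteq A_p$ yields $q \leq p$. Thus $q = p$ and $B = A_p$. Using the directedness of $(p_j)_j$ once more, any finite collection of elements of $A_p = \overline{\bigcup_j A_{p_j}}$ can be simultaneously approximated inside a single $A_{p_j}$, so the family of soft sub-C*-algebras $(A_{p_j})_j$ approximates $A_p$ in the sense required by \cref{prp:PermApprox}. That proposition then gives that $A_p$ is soft, so $p$ is soft by definition. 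The only routine verification is the identification of the open projection of $B$ with $\sup_j p_j$; I do not anticipate a substantive obstacle.
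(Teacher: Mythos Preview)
Your proposal is correct and follows essentially the same approach as the paper: both arguments identify $A_p$ with $\overline{\bigcup_j A_{p_j}}$ and then invoke \cref{prp:PermApprox}. The paper states the identity $A_p = \overline{\bigcup_j A_{p_j}}$ directly, while you supply the justification via hereditariness and the open-projection correspondence; this is routine and your argument is sound.
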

\begin{proof}
Let $p$ be the supremum of $(p_j)_j$. Then, 
\[
 A\cap pA^{**}p =\overline{A\cap \left(\bigcup_j p_j A^{**}p_j\right)}.
\]

It follows that each $A\cap p_j A^{**}p_j$ is a sub-\ca{} of $A\cap pA^{**}p$, and that the family $(A\cap p_j A^{**}p_j)_j$ approximates $A\cap pA^{**}p$. 
Since each $A\cap p_j A^{**}p_j$ is soft, we know from \cref{prp:PermApprox} that $A\cap pA^{**}p$ is soft. 
By definition, $p$ is soft.
\end{proof}

\begin{thm}
\label{prp:AbCompSoft}
Let $A$ be a \ca{} that has an abundance of soft elements, and let $a\in A_+$ be soft.
Then there exists a contractive, completely soft element $b$ such that $\overline{aAa}=\overline{bAb}$.
\end{thm}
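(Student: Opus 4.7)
The strategy is to realize $b$ as the positive contraction associated to a path of soft open projections of $A$ via the framework of \cref{pgr:OpenProj}. Set $p:=\supp(a)$, which is a soft open projection since $a$ is soft. If one constructs a map $f\colon(-\infty,0]\to\OP(A)$ that is a path in the sense of \cref{pgr:OpenProj} with $f(-1)=0$, $f(0)=p$, and every $f(t)$ soft, then \cite[Proposition~2.6]{Thi20arX:diffuseHaar} yields a positive contraction $b\in A$ with $f_b=f$. For such $b$, one has $\supp(b)=f(0)=p=\supp(a)$, giving $\overline{bAb}=\overline{aAa}$; and $\supp((b-\varepsilon)_+)=f(-\varepsilon)$ is soft for every $\varepsilon\geq 0$, so $(b-\varepsilon)_+$ is soft for every $\varepsilon\geq 0$, which by \cref{dfn:CompSoft} means that $b$ is completely soft.

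The core task is the construction of such a path, which I would carry out in two stages. First, build a $\prec$-increasing sequence $0=p_0\prec p_1\prec p_2\prec\cdots$ of soft open projections, each $\prec p$, with $\sup_n p_n=p$. After rescaling $a$ to be contractive, the spectral approximants $s_n:=\supp((a-\tfrac{1}{n})_+)$ satisfy $s_n\prec s_{n+1}\prec p$ via functional-calculus witnesses, and $\sup_n s_n=p$. Inductively, given $p_{n-1}$ soft with $p_{n-1}\prec p$, apply \cref{prp:WeakInterpolation} to the pair $p_{n-1},\,s_n\prec p$ with a tolerance $\varepsilon_n$ to obtain an open projection $r$ with $p_{n-1}\prec r\prec p$ and $s_n\prec_{\varepsilon_n}r$, then upgrade $r$ via \cref{prp:InterpolateSoft} (using that $p$ is soft) to a soft $p_n$ with $r\prec p_n\prec p$; this yields $s_n\prec_{\varepsilon_n}p_n$. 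Driving $\varepsilon_n\downarrow 0$ rapidly and applying the perturbation result \cref{prp:BicKos}, the accumulated approximate containments force $\sup_n p_n\geq\sup_n s_n=p$.

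Second, densify this sequence to a continuous path. Fix a countable dense set $D\subseteq[-1,0]$ containing $-1$, $0$, and each $-\tfrac{1}{n}$, pre-assign $q_{-1}:=0$, $q_0:=p$, and $q_{-1/n}:=p_{n-1}$, and enumerate the remaining points of $D$, inductively inserting a soft $q_d$ between the already-assigned left and right neighbors via \cref{prp:InterpolateSoft}. For $t\in[-1,0]\setminus D$, set $q_t:=\sup_{d\in D,\,d<t}q_d$; this is an open projection, and is soft by \cref{prp:SupSoftOp} (equivalently, by \cref{prp:PermApprox} applied to the norm-dense family of soft sub-\ca{s} $(A_{q_d})_{d<t,\,d\in D}$ inside $A_{q_t}$). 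Setting $f(t):=q_t$ for $t\in[-1,0]$ and $f(t):=0$ for $t<-1$ yields a path: left-continuity is built into the supremum definition, while $\prec$-strict monotonicity $f(s)\prec f(t)$ for $s<t$ follows by inserting two intermediate points $d_1<d_2$ of $D$ and transporting the witness of $q_{d_1}\prec q_{d_2}$ from the inductive construction.

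The main obstacle is the first stage, namely arranging that the $\prec$-increasing chain of soft open projections actually exhausts $p$ in supremum: a naive use of \cref{prp:InterpolateSoft} alone produces a chain of soft projections below $p$ with no a priori control over how close the chain climbs to $p$. The combined use of the approximate interpolation \cref{prp:WeakInterpolation} against the spectral approximants $s_n$, together with the Bice--Koszmider perturbation result \cref{prp:BicKos}, is what bridges this gap.
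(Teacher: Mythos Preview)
Your strategy coincides with the paper's: produce a $\prec$-increasing sequence of soft open projections below $p=\supp(a)$ via \cref{prp:WeakInterpolation} followed by \cref{prp:InterpolateSoft}, densify using \cref{prp:InterpolateSoft}, pass to suprema with \cref{prp:SupSoftOp}, and apply \cite[Proposition~2.6]{Thi20arX:diffuseHaar}. Two points deserve comment.

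First, the exhaustion $\sup_n p_n=p$ does not need \cref{prp:BicKos}; that result is used only inside the proof of \cref{prp:WeakInterpolation}. The witnesses $c_n\in A_+$ for $s_n\prec_{\varepsilon_n}p_n$ satisfy $(1-q)c_n=0$ where $q:=\sup_k p_k$, and since $s_m=s_ns_m$ for $m\leq n$ one gets $\|(1-q)s_m\|\leq\|(1-c_n)s_n\|<\varepsilon_n\to 0$, hence $s_m\leq q$ for every $m$ and so $p=\sup_m s_m\leq q\leq p$.

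Second, there is a genuine (if easily repaired) gap in your densification: left-continuity of $f$ is \emph{not} built in at the points of $D$, because the one-at-a-time insertion gives no control over $\sup_{d'<d,\,d'\in D}q_{d'}$ for $d\in D$ (for instance, nothing forces the inserted projections in $(-1,-\tfrac12)$ to climb up to $q_{-1/2}=p_1$). The paper sidesteps this by defining the path as $r(t):=\sup\{p(s):s\in[-1,t)\text{ dyadic}\}$ at \emph{every} $t\in(-1,0]$, thereby discarding the originally constructed dyadic values except as witnesses for $\prec$-monotonicity; left-continuity is then automatic, softness of each $r(t)$ follows from \cref{prp:SupSoftOp}, and the only thing left to check is $r(0)=p$, which is precisely the first-stage exhaustion.
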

\begin{proof}
Set $p(0) := \supp(a)$. Let us first prove that there exists a sequence of soft open projections $p(-\tfrac{1}{2^n})\in\OP (A)$ such that
\[
\supp((a-\tfrac{1}{2^n})_+) \prec_{1/2^n} p(-\tfrac{1}{2^n})\prec p(-\tfrac{1}{2^{n+1}})\prec p(0)
\]
for each $n\geq 1$.

For $n=1$, it follows from \cref{prp:InterpolateSoft} applied for $\supp((a-\tfrac{1}{2})_+)\prec p(0)$ that one can find a soft open projection $p(-\tfrac{1}{2})$ with the required properties.

Now fix $n\in\NN$ and assume that we have found the projections $p(-\tfrac{1}{2^k})$ for each $k\leq n$. Applying \cref{prp:WeakInterpolation} for $p(-\tfrac{1}{2^n}),\supp((a-\tfrac{1}{2^{n+1}})_+) \prec p(0)$ and $\varepsilon=\tfrac{1}{2^{n+1}}$ we obtain $r\in\OP(A)$ such that
\[
p(-\tfrac{1}{2^n}) \prec r \prec p(0), \andSep
\supp((a-\tfrac{1}{2^{n+1}})_+) \prec_{1/2^{n+1}} r.
\]

Since $r\prec p(0)$, it follows from  \cref{prp:InterpolateSoft} that there exists a soft open projection $p(-\tfrac{1}{2^{n+1}})$ such that $r \prec p(-\tfrac{1}{2^{n+1}}) \prec p(0)$, as desired. In particular, note that $p(0)=\sup_n p(-\tfrac{1}{2^n})$.

Repeated application of the previous construction to each $p(-\tfrac{1}{2^n})$ allows us to find soft open projections $p(t)$ for every dyadic number $t\in[-1,0]$ such that $p(-1)=0$, and such that $p(s) \prec p(t)$ whenever $s<t$.

We set $r(-1)=0$ and 
\[
r(t) := \sup \big\{ p(s) : s \in [-1,t)  \text{ dyadic} \big\}.
\]
for each $t\in(-1,0]$.

Note that 
\[
r(0) \leq p(0), \andSep
r(0) \geq \sup_n p(-\tfrac{1}{2^n}) = p(0),
\]
which implies $r(0)=p(0)$. Further, each $r(t)$ is soft by \cref{prp:SupSoftOp}.

It follows that $r\colon[-1,0]\to\OP(A)$ is a path in the sense of \cite[Definition~2.4]{Thi20arX:diffuseHaar}. By \cite[Proposition~2.6]{Thi20arX:diffuseHaar}, there is a unique positive element $b\in A_+$ such that $\supp((b+t)_+)=r(t)$ for each $t\in[-1,0]$.
Since $r(-1)=0$, the element $b$ is contractive.
Further, since $\supp(b)=r(0)=p(0)=\supp(a)$, we have $\overline{aAa}=\overline{bAb}$.

Finally, each cut-down of $b$ is soft by construction, as desired.
\end{proof}

Recall from \cref{dfn:AbSoft} that we say that a \ca{} $A$ has an \emph{abundance of soft elements} if for every $a \in A_+$ and every $\varepsilon >0$ there exists a positive, soft element $b\in\overline{aAa}$ such that $(a-\varepsilon )_+ \lhd b$.
One could therefore phrase statement~(2) in the next result by saying that `$A$ has an abundance of completely soft elements'.

\begin{thm}
\label{prp:EquivAbundanceSoft}
Let $A$ be a \ca.
Then the following are equivalent:
\begin{enumerate}
\item
The \ca{} $A$ has an abundance of soft elements.
\item
For every $a\in A_+$ and $\varepsilon>0$ there exists a positive, completely soft element $b\in\overline{aAa}$ such that $(a-\varepsilon)_+ \lhd b$.
\end{enumerate}
\end{thm}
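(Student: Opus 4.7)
The plan is to prove the nontrivial direction (1)$\Rightarrow$(2), since (2)$\Rightarrow$(1) is immediate from the observation that every completely soft element is in particular soft (taking $\varepsilon = 0$ in \cref{dfn:CompSoft}).

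For (1)$\Rightarrow$(2), I would proceed in two steps, essentially chaining the abundance hypothesis with \cref{prp:AbCompSoft}. First, fix $a \in A_+$ and $\varepsilon > 0$. Apply the assumption that $A$ has an abundance of soft elements to obtain a positive, soft element $b' \in \overline{aAa}$ such that $(a-\varepsilon)_+ \lhd b'$. Second, apply \cref{prp:AbCompSoft} to $b'$: this is where we crucially use that $A$ has an abundance of soft elements (which is the standing hypothesis of that theorem) together with the softness of $b'$. This produces a contractive, completely soft element $b \in A_+$ with $\overline{b'Ab'} = \overline{bAb}$.

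It remains to check the two required properties of $b$. Since $\overline{bAb} = \overline{b'Ab'} \subseteq \overline{aAa}$, we have $b \in \overline{aAa}$. Moreover, because $b$ and $b'$ generate the same hereditary sub-\ca{}, they generate the same closed two-sided ideal of $A$, so the relation $(a-\varepsilon)_+ \lhd b'$ transfers to $(a-\varepsilon)_+ \lhd b$. Thus $b$ has all the desired properties.

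There is essentially no obstacle here: all the hard work has already been done in \cref{prp:AbCompSoft}, where soft elements are upgraded to completely soft ones with the same hereditary sub-\ca{} via the path-of-open-projections construction. The present theorem is a clean corollary packaging that upgrade into the statement that ``abundance of soft'' is the same as ``abundance of completely soft''.
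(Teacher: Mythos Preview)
Your proposal is correct and matches the paper's proof essentially verbatim: both derive (1)$\Rightarrow$(2) by first invoking the abundance hypothesis to get a soft element in $\overline{aAa}$ dominating $(a-\varepsilon)_+$ in the ideal sense, and then applying \cref{prp:AbCompSoft} to upgrade it to a completely soft element with the same hereditary subalgebra. The remaining observations about containment in $\overline{aAa}$ and preservation of the $\lhd$ relation are exactly as in the paper.
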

\begin{proof}
It is clear that~(2) implies~(1).
Conversely, assume that~$A$ has an abundance of soft elements.
To verify~(2), take $a\in A_+$ and $\varepsilon>0$. 
Since $A$ has an abundance of soft elements, one can find a soft element $c\in\overline{aAa}_+$ such that $(a-\varepsilon)_+ \in\overline{\linSpan}AcA$. 
By \cref{prp:AbCompSoft}, there exists a compleltely soft element $b\in A_+$ such that $\overline{cAc}=\overline{bAb}$. This implies that $b\in\overline{aAa}_+$ and $(a-\varepsilon)_+ \in\overline{\linSpan}AbA$, which shows~(2).
\end{proof}

\section{Softness and the Global Glimm Property}
\label{sec:Glimm}

Following \cite[Definition~4.12]{KirRor02InfNonSimpleCalgAbsOInfty}, we say that a \ca{} $A$ has the \emph{Global Glimm Property} if for every $a\in A_+$, every $\varepsilon >0$ and every $k\geq 2$ there exists a $\ast$-homomorphism $\varphi\colon C_0 (0,1]\otimes M_k\to \overline{aAa}$ such that $(a-\varepsilon)_+$ is in the ideal generated by the image of $\varphi$.

Every \ca{} with the Global Glimm Property has no nonzero elementary ideal-quotients (that is, it is \emph{nowhere scattered} in the sense of \cite{ThiVil21arX:NowhereScattered}). 
It is an open problem, known as the \emph{Global Glimm Problem}, to determine if every nowhere scattered \ca{} has the Global Glimm Property; 
see \cite{ThiVil22arX:Glimm}.

We shed new light on this problem by proving that the Global Glimm Property implies having an abundance of soft elements, which in turn implies nowhere scatteredness;
see \cref{prp:AbSoftImplNWS,prp:2DivGivesStronglySoft}.
This decomposes the Global Glimm Problem into two subproblems;
see \cref{qst:ReverseImplications}.

\medskip

To prove these results, we use that nowhere scatteredness and the Global Glimm Property of a \ca{} $A$ are reflected in divisibility properties (recalled in \cref{pgr:Divisibility}) of its Cuntz semigroup $\Cu(A)$.
More precisely, $A$ is nowhere scattered if and only if $\Cu(A)$ is weakly $(2,\omega)$-divisible;
see \cite[Theorem~8.9]{ThiVil21arX:NowhereScattered}.
Further, $A$ has the Global Glimm Property if and only if $\Cu(A)$ is $(2,\omega)$-divisible;
see \cite[Theorem~3.6]{ThiVil22arX:Glimm}.

This translates the Global Glimm Problem into a question about Cuntz semigroups:
Does weak $(2,\omega)$-divisibility imply $(2,\omega)$-divisibility?

In \cite{ThiVil22arX:Glimm} we identified two conditions on a \CuSgp{} that precisely capture when the desired implication of divisibility properties holds: ideal-fitlererdness and property~(V).
More precisely, a \CuSgp{} satisfying \axiomO{5}-\axiomO{8} is $(2,\omega)$-divisible if and only if it is weakly $(2,\omega)$-divisible, ideal-filtered and has property (V); 
see \cite[Theorem~6.3]{ThiVil22arX:Glimm}.
Below, we give straightforward generalizations of ideal-filteredness and property~(V) to scaled \CuSgp{s} (\cref{dfn:IFScaled,dfn:VScaled}), and we note that the main result of \cite{ThiVil22arX:Glimm} can be generalized to the scaled setting.

We show that an abundance of strongly soft elements implies property~(V) (\cref{prp:AbSoftImplV}), which implies that a \CuSgp{} satisfying \axiomO{5}-\axiomO{8} is $(2,\omega)$-divisible if and only if it is ideal-filtered and has an abundance of strongly soft elements;
see \cref{prp:CharDivWithSoft}.
It follows that a \ca{} has the Global Glimm Property if and only if it has an abundance of soft elements and its scaled Cuntz semigroup is ideal-filtered; 
see \cref{prp:CharGlimm}.

\begin{pgr}
\label{pgr:Divisibility}
As defined in \cite[Definition~5.1]{RobRor13Divisibility}, a \CuSgp{} $S$ is said to be \emph{weakly $(2,\omega )$-divisible} if, for every pair $x'\ll x$ in $S$, there exist finitely many elements $y_1,\ldots ,y_n\in S$ such that $x'\leq y_1+\ldots +y_n$ and $2y_j\leq x$ for each $j\leq n$. 
 
Similarly, one says that $S$ is \emph{$(2,\omega )$-divisible} if, whenever $x'\ll x$ in $S$, then there exists $y\in S$ and $n\in\NN$ such that $2y\leq x$ and $x'\leq ny$.
\end{pgr}

In \cite[Lemma~3.5]{ThiVil22arX:Glimm}, we showed that a \CuSgp{} satisfying \axiomO{5}-\axiomO{7} is $(2,\omega)$-divisible whenever all elements in a scale are.
The next result shows that the analog statement holds for weak $(2,\omega)$-divisibility.

\begin{lma}
\label{prp:WeaklyDivScale}
Let $(S,\Sigma)$ be a scaled \CuSgp{} satisfying \axiomO{6} and assume that every element in $\Sigma$ is weakly $(2,\omega)$-divisible.
Then $S$ is weakly $(2,\omega)$-divisible.
\end{lma}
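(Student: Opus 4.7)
The plan is to use the hypothesis to reduce to applying weak $(2,\omega)$-divisibility on finitely many elements of $\Sigma$ whose sum dominates $x'$. The main preparatory observation is the following consequence of the fact that $\Sigma$ generates $S$ as an ideal: whenever $y \ll x$ in $S$, there exist $\sigma_1, \ldots, \sigma_k \in \Sigma$ such that $y \leq \sigma_1 + \ldots + \sigma_k$. I would establish this by defining
\[
J := \big\{ x \in S : y \leq \sigma_1 + \ldots + \sigma_k \text{ for some } \sigma_i \in \Sigma \text{ whenever } y \ll x \big\},
\]
and checking that $J$ is downward-hereditary, closed under addition (using \axiomO{4}), and closed under suprema of increasing sequences (using \axiomO{2}); since $\Sigma \subseteq J$, this forces $J$ to contain the ideal generated by $\Sigma$, and therefore $J = S$.

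Granted this scale property, given $x' \ll x$ I would select interpolants $x' \ll x''' \ll x'' \ll x$ via \axiomO{2} and apply it to $x'' \ll x$ to obtain $\sigma_1, \ldots, \sigma_k \in \Sigma$ with $x'' \leq \sigma_1 + \ldots + \sigma_k$. Invoking \axiomO{6} on $x''' \ll x'' \leq \sigma_1 + \ldots + \sigma_k$ yields $e_1, \ldots, e_k \in S$ with $x''' \leq e_1 + \ldots + e_k$ and $e_i \leq x'', \sigma_i$ for each $i$. By downward-heredity of $\Sigma$, each $e_i$ lies in $\Sigma$, which is the key point allowing the hypothesis to be invoked.

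Since $x' \ll x''' \leq e_1 + \ldots + e_k$, writing each $e_i$ as the supremum of a $\ll$-increasing sequence $(e_i^{(r)})_r$ and applying \axiomO{4} gives $r$ with $x' \leq e_1^{(r)} + \ldots + e_k^{(r)}$; set $e_i' := e_i^{(r)} \ll e_i$. Weak $(2,\omega)$-divisibility of $e_i \in \Sigma$, applied to $e_i' \ll e_i$, then produces $y^{(i)}_1, \ldots, y^{(i)}_{m_i} \in S$ with $e_i' \leq \sum_l y^{(i)}_l$ and $2 y^{(i)}_l \leq e_i \leq x$; collecting these across all $i$ yields $x' \leq \sum_{i,l} y^{(i)}_l$ with $2 y^{(i)}_l \leq x$ for every $i, l$. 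The main potential obstacle is the preparatory scale property in the first paragraph; once that is at hand, the rest is a routine assembly using \axiomO{6}, refinement via \axiomO{4}, and the divisibility hypothesis on $\Sigma$.
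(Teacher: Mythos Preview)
Your proof is correct and follows essentially the same approach as the paper's: both apply \axiomO{6} to push the problem down to finitely many elements of $\Sigma$ (using that $\Sigma$ is downward-hereditary), then invoke the divisibility hypothesis on each and reassemble. The only notable difference is that you explicitly justify the scale property (that $y \ll x$ in $S$ forces $y$ to lie below a finite sum of scale elements), whereas the paper simply asserts it as a consequence of $\Sigma$ generating $S$ as an ideal; your argument via the auxiliary set $J$ is a clean way to see this and fills in a detail the paper leaves implicit.
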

\begin{proof}
To show that every element in $S$ is weakly $(2,\omega)$-divisible, let $x',x \in S$ satisfy $x' \ll x$.
Pick $x'' \in S$ with $x' \ll x'' \ll x$.
Since $\Sigma$ is a scale, there exist $n$ and $y_1,\ldots,y_n \in \Sigma$ such that $x'' \leq y_1+\ldots+y_n$.
Applying \axiomO{6}, we obtain $z_1,\ldots,z_n$ such that
\[
x' \ll z_1 + \ldots + z_n, \andSep
z_j \leq x \text{ for $j=1,\ldots,n$}.
\]

Pick $z_j'$ such that
\[
x' \ll z_1' + \ldots + z_n', \andSep
z_j' \ll z_j \text{ for $j=1,\ldots,n$}.
\]

Each $z_j$ belongs to $\Sigma$ and is therefore weakly $(2,\omega)$-divisible.
We thus obtain elements $v_{j,1},\ldots,v_{j,N_j}$ such that
\[
z_j' \leq v_{j,1} + \ldots + v_{j,N_j}, \andSep
2v_{j,k} \leq z_j \text{ for $k=1,\ldots,N_j$}.
\]

Then $2v_{j,k} \leq z_j \leq x$ for each $j$ and $k$, and also
\[
x' \leq \sum_j z_j' \leq \sum_{j,k} v_{j,k}.
\]

This shows that the elements $v_{j,k}$ have the desired properties.
\end{proof}

Next, we prove that a \ca{} is nowhere scattered whenever it has an abundance of strongly soft elements; 
see \cref{prp:AbSoftImplNWS} below.
In preparation for the proof, we need the following lemma.

\begin{lma}
\label{prp:WeaklySoftWeaklyDiv}
Let $S$ be a \CuSgp{} satisfying \axiomO{6}, and let $x\in S$ be weakly soft.
Then $x$ is weakly $(2,\omega)$-divisible.
\end{lma}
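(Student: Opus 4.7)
The plan is to directly combine the definition of weak softness with axiom \axiomO{6} to produce the required weak $(2,\omega)$-decomposition. Given $x' \ll x$, I would first interpolate an auxiliary element $x'' \in S$ with $x' \ll x'' \ll x$, which gives room to apply weak softness to the stronger way-below relation $x'' \ll x$.

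Applying weak softness at $x'' \ll x$ yields $n \geq 1$ and elements $t_1, \ldots, t_n \in S$ such that $x'' + t_j \ll x$ for each $j$ and $x'' \ll t_1 + \ldots + t_n$. In particular, $x'' \leq t_1 + \ldots + t_n$. Then I would invoke the $n$-summand version of \axiomO{6} for $x' \ll x'' \leq t_1 + \ldots + t_n$ to obtain elements $e_1, \ldots, e_n \in S$ satisfying
\[
x' \leq e_1 + \ldots + e_n, \andSep e_j \leq x'',\ e_j \leq t_j \text{ for } j=1,\ldots,n.
\]

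The key point is now immediate: for each $j$, combining $e_j \leq x''$ and $e_j \leq t_j$ gives $2e_j \leq x'' + t_j \leq x$, using at the last step that $x'' + t_j \ll x$. Together with $x' \leq e_1 + \ldots + e_n$, this exhibits $e_1, \ldots, e_n$ as the elements certifying weak $(2,\omega)$-divisibility at $x' \ll x$.

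I do not anticipate a main obstacle: the argument is a short, formal manipulation, and the only axiom beyond the definitions that is used is \axiomO{6} (in its equivalent formulation for a finite sum, as recalled in \cref{pgr:CuSgps}). The interpolation step $x' \ll x'' \ll x$ uses only \axiomO{2}, which holds in any \CuSgp{}.
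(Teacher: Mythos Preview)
Your proposal is correct and follows essentially the same argument as the paper's proof: interpolate $x' \ll x'' \ll x$, apply weak softness at $x''$, apply the $n$-summand form of \axiomO{6}, and observe $2e_j \leq x'' + t_j \leq x$. The only cosmetic difference is that the paper records some of the resulting inequalities with $\ll$ rather than $\leq$, but this is immaterial to the argument.
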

\begin{proof}
To verify that $x$ is weakly $(2,\omega)$-divisible, let $x'\in S$ satisfy $x'\ll x$.
Choose $x''\in S$ such that $x'\ll x''\ll x$.
Since $x$ is weakly soft, we obtain $n\geq 1$ and $t_1,\ldots,t_n\in S$ such that
\[
x''+t_1\ll x, \quad\ldots, \quad 
x''+t_n\ll x, \andSep 
x''\ll t_1+\ldots+t_n.
\]

Applying \axiomO{6} to $x'\ll x''\ll t_1+\ldots+t_n$, we obtain $z_1,\ldots,z_n\in S$ such that
\[
x'\ll z_1+\ldots+z_n, \quad
z_1\ll x'',t_1, \quad\ldots, \quad
z_n\ll x'',t_n.
\]

Then
\[
2z_j \ll x''+t_j \ll x
\]
for each $j=1,\ldots,n$.
Thus, $z_1,\ldots,z_n$ have the desired properties.
\end{proof}

\begin{prp}
\label{prp:AbSoftImplNWS}
Let $(S,\Sigma)$ be a scaled \CuSgp{} satisfying \axiomO{6} that has an abundance of strongly soft elements.
Then $S$ is weakly $(2,\omega)$-divisible.

Consequently, if a \ca{} has an abundance of soft elements, then it is nowhere scattered.
\end{prp}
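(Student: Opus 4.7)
The plan is first to establish weak $(2,\omega)$-divisibility for every element of the scale $\Sigma$, and then to invoke \cref{prp:WeaklyDivScale} to extend divisibility to all of $S$. The key observation is that, by \cref{prp:soft}, strongly soft elements are weakly soft, and by \cref{prp:WeaklySoftWeaklyDiv}, weakly soft elements in a \CuSgp{} satisfying \axiomO{6} are themselves weakly $(2,\omega)$-divisible. Thus the strongly soft element $y$ furnished by the abundance hypothesis already carries the desired divisibility; the remaining task is to transfer that divisibility from $y$ to the ambient element $x$.

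Concretely, given $x' \ll x$ with $x \in \Sigma$, I would interpolate $x' \ll x_0 \ll x'' \ll x$, note that $x'' \in \Sigma$ by the downward-heredity of scales, and apply the abundance hypothesis to $x'' \ll x$ to obtain a strongly soft $y \in S$ with $y \leq x$ and $x'' \lhd y$. Since $x'' \leq \infty y = \sup_k ky$ and $x_0 \ll x''$, there exists $k$ with $x_0 \leq ky$, whence $x' \ll ky$. Choosing a $\ll$-increasing sequence $(y_n)_n$ with supremum $y$, \axiomO{4} gives $ky = \sup_n ky_n$, so $x' \leq ky_n$ for some $n$. Because $y_n \ll y$ and $y$ is weakly $(2,\omega)$-divisible, one obtains $z_1,\ldots,z_m$ with $y_n \leq z_1+\cdots+z_m$ and $2z_j \leq y \leq x$. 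Then $x'$ is majorized by $km$ elements (namely, $k$ copies of each $z_j$), each satisfying $2(\cdot) \leq x$, which establishes weak $(2,\omega)$-divisibility of $x$ and, via \cref{prp:WeaklyDivScale}, of $S$.

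For the consequence about \ca{s}, if $A$ has an abundance of soft elements, then by \cref{prp:EquivAbundance} the scaled Cuntz semigroup $(\Cu(A),\Sigma_A)$ has an abundance of strongly soft elements. Since $\Cu(A)$ satisfies \axiomO{6}, the first part yields weak $(2,\omega)$-divisibility of $\Cu(A)$, and the characterization \cite[Theorem~8.9]{ThiVil21arX:NowhereScattered} then identifies $A$ as nowhere scattered.

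The most delicate point will be bridging the bare inequality $x'' \leq \infty y$, which comes from the $\lhd$-relation, to a genuine way-below relation that can interact with the $\ll$-approximation $(y_n)_n$ of $y$. This is precisely what forces the double interpolation $x' \ll x_0 \ll x'' \ll x$ before applying the abundance hypothesis, and it is also why both \axiomO{4} (for the passage $ky = \sup_n ky_n$) and \axiomO{6} (inside \cref{prp:WeaklyDivScale} and \cref{prp:WeaklySoftWeaklyDiv}) are needed.
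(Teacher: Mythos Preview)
Your proof is correct and follows essentially the same strategy as the paper: reduce to elements of $\Sigma$ via \cref{prp:WeaklyDivScale}, interpolate below $x$, extract a strongly soft $y$ from the abundance hypothesis, exploit weak $(2,\omega)$-divisibility of $y$ (via \cref{prp:WeaklySoftWeaklyDiv}), and then pass back to $x$ by taking finitely many copies of the resulting divisors. The only cosmetic difference is that the paper selects a single $y' \ll y$ with $x'' \lhd y'$ directly (an instance of \cref{prp:BasicLhdCu}(1)), whereas you obtain the analogous element $y_n$ by approximating $y$ with a $\ll$-increasing sequence and invoking \axiomO{4}.
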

\begin{proof}
By \cref{prp:WeaklyDivScale}, it suffices to verify that every element in $\Sigma$ is weakly $(2,\omega)$-divisible.
Let $x',x \in \Sigma$ satisfy $x' \ll x$.
Pick $x'',x''' \in S$ with $x' \ll x'' \ll x'''\ll x$.
Then there exists a strongly soft element $y \in S$ satisfying $x''' \lhd y \ll x$.
Take $y' \in S$ with $y' \ll y$ and $x'' \lhd y'$.
By \cref{prp:WeaklySoftWeaklyDiv}, $y$ is weakly $(2,\omega)$-divisible, which means that we can find $n$ and $z_1,\ldots,z_n \in S$ such that
\[
y' \leq z_1+\ldots+z_n, \andSep
2z_j \leq y \text{ for $j=1,\ldots,n$}.
\]
Using that $x' \ll x'' \lhd y'$, we obtain $N\in\NN$ such that $x' \leq Ny'$.
Then the elements with the desired properties are given by considering $N$ copies of each of $z_1,\ldots,z_n$.

For the second part of the statement, let $A$ be a \ca{} with an abundance of soft elements.
By \cref{prp:EquivAbundance}, the scaled \CuSgp{} $(\Cu(A),\Sigma_A)$ has an abundance of strongly soft elements.
Since Cuntz semigroups of \ca{s} always satisfy \axiomO{6}, we can apply the above argument to deduce that $\Cu(A)$ is weakly $(2,\omega)$-divisible.
Hence, $A$ is nowhere scattered by \cite[Theorem~8.9]{ThiVil21arX:NowhereScattered}.
\end{proof}

\cref{prp:2DivGivesStronglySoft} below showcases the exact difference between having an abundance of strongly soft elements and being $(2,\omega )$-divisible: 
In the first case, for every element $x$ one can find a strongly soft element $y$ such that $y\leq x\leq \infty y$ (\cref{prp:CuEquivAbSoft}). 
In the second case, $y$ can be chosen such that $2y\leq x\leq \infty y$.

In the proof of \cref{prp:kDivSeq} below, we will apply the following useful consequence of \axiomO{5}:

\begin{lma}[{\cite[Lemma~2.2]{ThiVil22arX:Glimm}}]
\label{prp:ComplementOfDivisor}
Let $S$ be a \CuSgp{} satisfying \axiomO{5}, and let $k \in \NN$ and $x',x,z \in S$ satisfy $x' \ll x$ and $(k+1)x \leq z$.
Then there exists $y \in S$ such that $kx' + y \leq z \leq (k+1)y$ and $x' \ll y$.
\end{lma}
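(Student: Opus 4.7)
The plan is to apply axiom \axiomO{5} twice in succession: first to the decomposition $(k+1)x = kx + x \leq z$ to produce a ``complement'' of $kx$ inside $z$, and then to the resulting inequality in order to refine this complement into the desired element $y$.

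For the first application, I would choose an auxiliary $x''$ with $x' \ll x'' \ll x$ (possible by \axiomO{2}) and invoke \axiomO{5} on $kx + x \leq z$ with interpolants $kx'' \ll kx$ (valid by \axiomO{3}) and $x'' \ll x$. This should produce an element $c$ satisfying $kx'' + c \leq z \leq kx + c$ together with $x'' \ll c$. At this point one is tempted to take $y := c$ directly, but the upper bound $z \leq (k+1)c$ one would need to verify requires $x \leq c$ (to pass from $z \leq kx + c$ to $z \leq kc + c$), which is strictly stronger than the $x'' \ll c$ that the first application delivers. I expect this gap to be the main obstacle.

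To overcome it, I would pick an auxiliary $c_0$ with $x'' \ll c_0 \ll c$, which exists by expressing $c$ as the supremum of a $\ll$-increasing sequence and applying the defining property of $x'' \ll c$. I would then apply \axiomO{5} a second time, now to the already established inequality $kx'' + c \leq z$, with interpolants $kx' \ll kx''$ (again via \axiomO{3}) and $c_0 \ll c$. This should yield the sought $y$ with $kx' + y \leq z \leq kx'' + y$ and $c_0 \ll y$. The chain $x' \ll x'' \leq c_0 \leq y$ then simultaneously gives $x' \ll y$ and promotes the leftover upper bound $z \leq kx'' + y$ into $z \leq ky + y = (k+1)y$, completing the argument. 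The entire strategy can be summarized as: the two-stage interpolation $x' \ll x'' \ll x$ converts the insufficient way-below relation $x'' \ll c$ from the first application of \axiomO{5} into the actual order relation $x'' \leq y$ in the second, which is exactly what is needed to absorb $kx''$ into $ky$.
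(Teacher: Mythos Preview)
Your argument is correct. The paper does not include its own proof of this lemma but merely cites \cite[Lemma~2.2]{ThiVil22arX:Glimm}; your two-stage application of \axiomO{5}, using an interpolant $x' \ll x'' \ll x$ to convert the way-below relation $x'' \ll c$ from the first pass into the order relation $x'' \leq y$ after the second, is the natural route and matches what one expects from the cited reference. (A small remark: in your third step you only need $x'' \leq c_0 \ll c$, not $x'' \ll c_0$; the weaker relation already suffices for both $x' \ll y$ and $kx'' \leq ky$.)
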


\begin{lma}
\label{prp:kDivSeq}
Let $k\in\NN$, and let $S$ be a $(2,\omega )$-divisible \CuSgp{} satisfying \axiomO{5}. 
Then, whenever $(x_n)_n$ is a $\ll$-increasing sequence in $S$, there exists a sequence $(y_n)_n$ such that 
\[
\sum_{n=0}^\infty ky_n \leq \sup_n x_n,\andSep 
y_n,x_{n+1}\ll \infty y_{n+1}
\]
for every $n\geq 0$.
\end{lma}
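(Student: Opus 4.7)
The plan is to first bootstrap $(2,\omega)$-divisibility to $(k+1,\omega)$-divisibility, and then inductively construct the $y_n$'s along with a ``reservoir'' element that keeps partial sums under control.

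The first step is to verify that iterated application of $(2,\omega)$-divisibility, together with \axiomO{5}, yields a strict form of $(k+1,\omega)$-divisibility: for every $a' \ll a$ in $S$, there exist $b \in S$ and $L \in \NN$ such that $(k+1) b \ll a$ and $a' \leq L b$.  The strict way-below relation is obtained by first passing to an intermediate $a'' \ll a$ and applying $(k+1,\omega)$-divisibility to the pair $a' \ll a''$, which forces $(k+1) b \leq a'' \ll a$.

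Next, I would inductively construct $y_n \in S$ and auxiliary reservoirs $s_n \in S$ maintaining the invariant
\begin{equation*}
k y_0 + \ldots + k y_n + s_n \leq x_{n+3}, \qquad x_{n+2} \ll s_n, \qquad y_n \ll x_{n+2},
\end{equation*}
with base case $y_0 := 0$ and $s_0 := x_3$.  At the inductive step, I would use \axiomO{2} to interpolate $x_{n+2} \ll t \ll s_n$, and then apply the strict $(k+1,\omega)$-divisibility to $x_{n+2} \ll t$ to obtain $y_{n+1}$ satisfying $(k+1) y_{n+1} \leq t$ and $x_{n+2} \leq L y_{n+1}$.  The required output relations then follow immediately: $x_{n+1} \ll x_{n+2} \leq L y_{n+1} \leq \infty y_{n+1}$ gives $x_{n+1} \ll \infty y_{n+1}$; likewise $y_n \ll x_{n+2} \leq \infty y_{n+1}$ gives $y_n \ll \infty y_{n+1}$; and $y_{n+1} \ll s_n \leq x_{n+3}$ gives the next-stage invariant $y_{n+1} \ll x_{n+3}$.

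The technical heart of the argument is refreshing the reservoir, that is, producing $s_{n+1}$ with $k y_0 + \ldots + k y_{n+1} + s_{n+1} \leq x_{n+4}$ and $x_{n+3} \ll s_{n+1}$.  Since $(k+1) y_{n+1} \ll s_n$ by construction, the chain of inequalities $k y_0 + \ldots + k y_n + k y_{n+1} + y_{n+1} \leq x_{n+3} \leq x_{n+4}$ leaves enough slack for \axiomO{5} to produce a new element $c$ with $y_{n+1}' \ll c$ for some $y_{n+1}' \ll y_{n+1}$; combining this with further interpolations from \axiomO{2} (and, if necessary, replacing $y_{n+1}$ by a slightly smaller element chosen so that all the $\ll$-approximations match up), one obtains an $s_{n+1}$ satisfying both halves of the invariant.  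The delicate alignment of these $\ll$-approximations is the main obstacle of the proof; everything else is formal.  Once the induction is complete, the conclusion follows at once: $\sum_{n=0}^\infty k y_n = \sup_N (k y_0 + \ldots + k y_N) \leq \sup_N x_{N+3} = \sup_n x_n$, and the way-below relations $y_n, x_{n+1} \ll \infty y_{n+1}$ are built into each inductive step.
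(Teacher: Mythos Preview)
Your high-level strategy matches the paper's: bootstrap to $(k+1,\omega)$-divisibility, then inductively build the $y_n$ alongside a reservoir element using \axiomO{5}. The gap is exactly where you flag it. Your invariant demands $x_{n+3} \ll s_{n+1}$, but \axiomO{5} applied to $ky_0 + \ldots + ky_{n+1} + y_{n+1} \leq x_{n+4}$ yields a complement $c$ satisfying only $y_{n+1}' \ll c$ for a chosen $y_{n+1}' \ll y_{n+1}$, together with $w' + c \leq x_{n+4}$ for a chosen $w' \ll ky_0 + \ldots + ky_{n+1}$. Neither conclusion gives $x_{n+3} \ll c$: the witness $y_{n+1}'$ is small (dominated by $t$), while $x_{n+3}$ is close to $\sup_n x_n$. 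Nor do you recover the exact sum on the left of $c$, since $y_0,\ldots,y_n$ are already fixed and cannot be shrunk after the fact. Replacing only $y_{n+1}$ addresses neither issue, and further \axiomO{2}-interpolation cannot manufacture the missing relation $x_{n+3} \ll c$.

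The paper sidesteps this by maintaining a weaker invariant on \emph{nested} reservoirs $c_n$: it asks only for ideal containment $x \leq \infty c_n$ (together with $y_n \ll c_n$), not a way-below bound of the form $x_m \ll c_n$. The tool is \cref{prp:ComplementOfDivisor}: from $y_{n+1}' \ll y_{n+1}$ and $(k+1)y_{n+1} \leq c_n$ one extracts $c_{n+1}$ with $ky_{n+1}' + c_{n+1} \leq c_n \leq (k+1)c_{n+1}$. The inequality $c_n \leq (k+1)c_{n+1}$ propagates ideal containment ($x \leq \infty c_n \leq \infty c_{n+1}$), which is exactly what is needed to feed $x_{n+1} \ll \infty c_n'$ into the next divisibility step; it would \emph{not} propagate a way-below relation. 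The nesting $ky_{j+1} + c_{j+1} \leq c_j$ then telescopes to give $\sum_n ky_n \leq x$. If you weaken your $x_{n+2} \ll s_n$ to an ideal-containment statement, your argument essentially collapses into the paper's.
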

\begin{proof}
We will use below that $S$ is $(k+1,\omega)$-divisible by \cite[Lemma~3.4]{ThiVil22arX:Glimm}.

Let $(x_n)_n$ be a $\ll$-increasing sequence in $S$, and let $x$ be its supremum. 
Using induction on $n$, we will find elements $y_j, c_j \in S$ for $j\in\NN$ such that 
\[
ky_{j+1}+ c_{j+1} \leq c_{j}\leq \infty c_{j+1},\quad 
y_{j}\ll c_{j},\andSep 
y_{j},x_{j+1}\ll \infty y_{j+1}.
\]
for all $j\geq 0$, and with $ky_0+c_0\leq x\leq (k+1)c_0$.

First, applying that $S$ is $(k+1,\omega)$-divisible for $x_0 \ll x$, we find $z \in S$ such that 
\[
(k+1)z \ll x,\andSep 
x_0 \ll \infty z.
\]
 
Choose $y_0 \in S$ such that
\[
y_0 \ll z, \andSep x_0 \ll \infty y_0.
\]
Then, it follows from \cref{prp:ComplementOfDivisor} that there exists $c_0 \in S$ with 
\[
ky_0 + c_0\leq x\leq (k+1)c_0,\andSep y_0\ll c_0,
\]
and it is readily checked that $c_0,y_0$ satisfy the required conditions.

Now fix $n\in\NN$ and assume that we have found the elements $y_j,c_j$ for every $j\leq n$. 
In particular, we have $x\leq \infty c_0\leq \infty c_n$. 
Choose $c_n' \in S$ such that
\[
c_n' \ll c_n, \quad
y_n \ll c_n', \andSep 
x_{n+1} \ll \infty c_n'.
\] 
Applying that $S$ is $(k+1,\omega )$-divisible for $c_n' \ll c_n$, one finds $z_{n+1}\in S$ such that 
\[
(k+1)z_{n+1}\ll c_n,\andSep 
c_n'\ll \infty z_{n+1}.
\]
Choose $y_{n+1} \in S$ such that
\[
y_{n+1} \ll z_{n+1}, \andSep c_n'\ll \infty y_{n+1}.
\]
Applying \cref{prp:ComplementOfDivisor}, we find $c_{n+1} \in S$ with 
\[
ky_{n+1} + c_{n+1} \leq c_n \leq (k+1)c_{n+1}, \andSep 
y_{n+1} \ll c_{n+1},
\]
which finishes the inductive argument.

Finally, note that for every $n\in\NN$, one has 
\[
\begin{split}
ky_0+\ldots +ky_n 
&\leq ky_0+\ldots +ky_n + c_n
\leq ky_0+\ldots +ky_{n-1}+c_{n-1} \\
&\leq \ldots 
\leq ky_0+c_0\leq x.
\end{split}
\]
 
Taking supremum on $n$, we obtain the required result.
\end{proof}

\begin{prp}
\label{prp:2DivGivesStronglySoft}
Let $S$ be a $(2,\omega)$-divisible \CuSgp{} satisfying \axiomO{5}. 
Then for every $x\in S$ and $k \in \NN$ there exists a strongly soft element $y\in S$ such that $ky\leq x\leq\infty y$.
In particular, $S$ has an abundance of strongly soft elements.

Consequently, if a \ca{} has the Global Glimm Property, then it has an abundance of soft elements.
\end{prp}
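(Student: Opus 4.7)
The plan is to produce the strongly soft element as an infinite sum constructed via \cref{prp:kDivSeq}, and to verify softness via \cref{prp:ExaSoft}.

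Given $x\in S$ and $k\in\NN$, I would first use \axiomO{2} to choose a $\ll$-increasing sequence $(x_n)_n$ in $S$ with supremum $x$. Applying \cref{prp:kDivSeq} to $(x_n)_n$ yields a sequence $(y_n)_n$ in $S$ satisfying
\[
\sum_{n=0}^\infty k y_n \leq x \andSep y_n,\, x_{n+1} \ll \infty y_{n+1}
\]
for every $n \geq 0$. Setting $y := \sum_{n=0}^\infty y_n$, the chain $y_n \leq \infty y_{n+1}$ together with \cref{prp:ExaSoft} gives that $y$ is strongly soft. The inequality $ky \leq x$ then follows by interchanging $k$ with the supremum via \axiomO{4} applied to the partial sums, while $x = \sup_n x_{n+1} \leq \infty y$ follows from $x_{n+1} \leq \infty y_{n+1} \leq \infty y$ for each~$n$.

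The `in particular' assertion is immediate by taking $k = 1$: for any $x', x \in S$ with $x' \ll x$, the above produces a strongly soft $y$ with $y \leq x \leq \infty y$, whence $x' \leq x \leq \infty y$ gives $x' \lhd y \leq x$, which is abundance of strongly soft elements in the unscaled sense discussed after \cref{dfn:AbSoft}.

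For the consequence, suppose a \ca{} $A$ has the Global Glimm Property; then $\Cu(A)$ is $(2,\omega)$-divisible and satisfies \axiomO{5}. Applying the first part of the theorem with $k=1$ to any $x \in \Sigma_A$ yields a strongly soft $y \in \Cu(A)$ with $y \leq x \leq \infty y$, and since $\Sigma_A$ is downward-hereditary we get $y \in \Sigma_A$. Thus $(\Cu(A),\Sigma_A)$ has an abundance of strongly soft elements, and \cref{prp:EquivAbundance} delivers the desired abundance of soft elements in $A$. No serious obstacle is expected: both \cref{prp:kDivSeq,prp:ExaSoft} are tailored for precisely this telescoping construction, so the proof essentially reduces to assembling their conclusions.
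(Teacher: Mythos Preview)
Your proposal is correct and follows essentially the same route as the paper: both arguments pick a $\ll$-increasing sequence with supremum $x$, feed it into \cref{prp:kDivSeq} to obtain the sequence $(y_n)_n$, set $y=\sum_n y_n$, and then invoke \cref{prp:ExaSoft} for strong softness and \cref{prp:EquivAbundance} for the \ca{ic} consequence. Your remark that $y\in\Sigma_A$ by downward-hereditariness is correct but unnecessary, since the definition of abundance of strongly soft elements for $(\Cu(A),\Sigma_A)$ only requires $y\in\Cu(A)$.
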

\begin{proof}
Let $x\in S$ and $k\in\NN$, and let $(x_n)_n$ be a $\ll$-increasing sequence in $S$ with supremum $x$. Using \cref{prp:kDivSeq}, we find a sequence $(y_n)_n$ such that
\[
\sum_{n=0}^\infty ky_n \leq x,\andSep 
y_n,x_{n+1}\ll \infty y_{n+1}
\]
for every $n\geq 0$.

Set $y:=\sum_{n=0}^\infty y_n$, which satisfies $ky\leq x$ by construction. Then, since we have
\[
x_n\leq\infty y_n \leq \infty y,
\]
for each $n$, we deduce $x\leq\infty y$.
Further, $y$ is strongly soft by \cref{prp:ExaSoft}.

For the second part of the statement, let $A$ be a \ca{} with the Global Glimm Property.
Then $\Cu(A)$ is $(2,\omega)$-divisible by \cite[Theorem~3.6]{ThiVil22arX:Glimm}. 
Since Cuntz semigroups of \ca{s} always satisfy \axiomO{5}, we can apply the above argument to deduce that $\Cu(A)$ has an abundance of strongly soft elements.
Hence, $A$ (and in fact, even $A\otimes\KK$) has an abundance of soft elements by \cref{prp:EquivAbundance}.
\end{proof}

If a \ca{} has the Global Glimm Property, then it has an abundance of soft elements by \cref{prp:2DivGivesStronglySoft}; 
and if a \ca{} has an abundance of soft elements, then it is nowhere scattered by \cref{prp:AbSoftImplNWS}.
Therefore, the Global Glimm Problem, which asks if every nowhere scattered \ca{} has the Global Glimm Property, decomposes into the following subquestions:

\begin{qst}
\label{qst:ReverseImplications}
If a \ca{} is nowhere scattered, does it have an abundance of soft elements?
If a \ca{} has an abundance of soft elements, does it have the Global Glimm Property?
\end{qst}

In \cite[Definitions~4.1,~5.1]{ThiVil22arX:Glimm}, we defined ideal-filteredness and property~(V) for \CuSgp{s}.
We will use the following straightforward generalizations to scaled \CuSgp{s}.

\begin{dfn}
\label{dfn:IFScaled}
We say that a scaled \CuSgp{} $(S,\Sigma)$ is \emph{ideal-filtered} if for all $v',v \in S$ and $x,y \in \Sigma$ satisfying
\[
v' \ll v \ll \infty x, \infty y,
\]
there exists $z \in S$ such that
\[
v' \ll \infty z, \andSep
z \ll x,y.
\]
\end{dfn}

\begin{dfn}
\label{dfn:VScaled}
We say that a scaled \CuSgp{} $(S,\Sigma)$ has \emph{property~(V)} if, given $d_1',d_1,d_2',d_2,c,x\in \Sigma$ such that 
\[
d_1' \ll d_1, \quad 
d_2' \ll d_2, \quad 
d_1,d_2 \ll c, \andSep 
c+d_1,c+d_2 \ll x,
\]
there exist $y,z\in S$ satisfying
\[
y+z \leq x, \andSep 
d_1'+d_2'\leq \infty y,\infty z.
\]
\end{dfn}

\begin{lma}
\label{prp:AbSoftImplV}
Let $(S,\Sigma)$ be a scaled \CuSgp{} with an abundance of strongly soft elements. 
Then $(S,\Sigma)$ has property~(V).
\end{lma}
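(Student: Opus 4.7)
The plan is to use the abundance hypothesis to produce a single strongly soft element $w$ with $w \leq x$ whose ideal contains $d_1' + d_2'$, and then to split $w$ into the two required pieces by exploiting its own strong softness.

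First, since $c + d_1 \leq x \in \Sigma$ and $\Sigma$ is downward-hereditary, one has $c + d_1 \in \Sigma$. Applying the abundance of strongly soft elements to the pair $c + d_1 \ll x$ yields a strongly soft $w \in S$ with $c + d_1 \lhd w \leq x$. Combining $d_1' + d_2' \ll d_1 + d_2$ (which follows from \axiomO{3}) with $d_1 + d_2 \leq 2c \leq \infty(c + d_1) \leq \infty w$, one deduces $d_1' + d_2' \ll \infty w$. Writing $w = \sup_n w_n$ for some $\ll$-increasing sequence $(w_n)_n$ and invoking \axiomO{4}, the identity $\infty w = \sup_{m,n} m w_n$ allows one to extract, via two applications of the definition of $\ll$, some $m \in \NN$ and some $w' \ll w$ with $d_1' + d_2' \leq m w'$.

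Next, the definition of strong softness applied to $w' \ll w$ produces $t \in S$ with $w' + t \ll w$ and $w' \ll \infty t$. Setting $y := w'$ and $z := t$, all three requirements are then immediate: $y + z \leq w \leq x$; $d_1' + d_2' \leq m w' = m y \leq \infty y$; and $d_1' + d_2' \leq m w' \leq m \cdot \infty t = \infty z$, where the last inequality uses $w' \leq \infty t$.

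The only delicate step is the refinement from $d_1' + d_2' \leq \infty w$ to $d_1' + d_2' \leq m w'$ for some $w' \ll w$: this is precisely where the way-below bound $d_1' + d_2' \ll d_1 + d_2$, furnished by \axiomO{3}, is indispensable, and where \axiomO{4} is needed to commute the two suprema defining $\infty w$.
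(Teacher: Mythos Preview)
Your proof is correct and follows essentially the same strategy as the paper: produce a strongly soft element below $x$ whose ideal contains $d_1'+d_2'$, then split it into two pieces using strong softness. The paper interpolates elements $x'' \ll x' \ll x$ above $c+d_1,c+d_2$, applies abundance to $x' \ll x$, and then invokes the characterization of strong softness (\cref{prp:CharStrongSoft}) to obtain $z$ with $y+z\leq s\leq \infty z$; you instead apply abundance directly to $c+d_1 \ll x$ and work with the raw definition of strong softness, which is a perfectly acceptable variant.
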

\begin{proof}
Let $d_1',d_1,d_2',d_2,c,x \in \Sigma$ be such that 
\[
d_1' \ll d_1, \quad 
d_2' \ll d_2, \quad 
d_1,d_2 \ll c, \andSep 
c+d_1,c+d_2 \ll x.
\]

Choose $x'',x' \in S$ such that
\[
x'' \ll x' \ll x, \andSep 
c+d_1,c+d_2\ll x''.
\]

Using that $(S,\Sigma)$ has an abundance of strongly soft elements for $x' \ll x$, we find a strongly soft element $s \in S$ with $x' \lhd s \ll x$. 
Choose $y \in S$ such that $x'' \lhd y \ll s \ll x$.

Since $s$ is strongly soft, we know by \cref{prp:CharStrongSoft} that there exists $z\in S$ satisfying $y+z\leq s\leq \infty z$. 
It is readily checked that the elements $y,z$ have the desired properties.
\end{proof}

\begin{thm}
\label{prp:CharDivWithSoft}
Let $S$ be a \CuSgp{} satisfying \axiomO{5}-\axiomO{8}.
Then the following are equivalent:
\begin{enumerate}
\item
$S$ is $(2,\omega)$-divisible.
\item
Every element in $\Sigma$ is $(2,\omega)$-divisible.
\item
Every element in $\Sigma$ is weakly $(2,\omega)$-divisible and $(S,\Sigma)$ is ideal-filtered and has property~(V).
\item
$(S,\Sigma)$ is ideal-filtered and for every $x$ in $S$, there exists a strongly soft element $y\in S$ such that $2y\leq x\leq\infty y$;
\item
$(S,\Sigma)$ is ideal-filtered and has an abundance of strongly soft elements.
\end{enumerate}
\end{thm}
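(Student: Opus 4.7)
The plan is to establish the five equivalences via the pattern $(1) \Leftrightarrow (2)$, $(1) \Leftrightarrow (3)$, and the cycle $(1) \Rightarrow (4) \Rightarrow (5) \Rightarrow (3)$. The equivalence $(1) \Leftrightarrow (2)$ is nearly formal: the forward direction is tautological, while the converse is the $(2,\omega)$-divisibility analog of \cref{prp:WeaklyDivScale} (equivalently, the scaled version of \cite[Lemma~3.5]{ThiVil22arX:Glimm}), whose proof carries over \emph{mutatis mutandis} under \axiomO{5}-\axiomO{7}. The equivalence $(1) \Leftrightarrow (3)$ is the scaled generalization of \cite[Theorem~6.3]{ThiVil22arX:Glimm}, flagged in the preamble of this section as routine to adapt: for the forward direction one checks directly from the definitions that $(2,\omega)$-divisibility forces weak $(2,\omega)$-divisibility, ideal-filteredness, and property~(V); for the converse, given $x' \ll x$ in $\Sigma$, one uses weak $(2,\omega)$-divisibility to write $x'$ as a sum of halvable pieces and then iteratively applies property~(V) together with ideal-filteredness to amalgamate these pieces into a single $y \in S$ satisfying $2y \leq x$ and $x' \leq \infty y$.

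The remaining three implications invoke the soft-element theory developed in this paper. For $(1) \Rightarrow (4)$, the existence of a strongly soft $y$ with $2y \leq x \leq \infty y$ for every $x \in S$ is the case $k = 2$ of \cref{prp:2DivGivesStronglySoft}, while ideal-filteredness is obtained by first passing through $(1) \Rightarrow (3)$. For $(4) \Rightarrow (5)$, the inequalities $2y \leq x$ and $x \leq \infty y$ immediately give $y \leq x$ and $x \lhd y$; hence the equivalence $(1) \Leftrightarrow (2)$ in \cref{prp:CuEquivAbSoft} supplies an abundance of strongly soft elements. For $(5) \Rightarrow (3)$, ideal-filteredness is carried over directly, property~(V) is \cref{prp:AbSoftImplV}, and weak $(2,\omega)$-divisibility of every element of $\Sigma$ follows from \cref{prp:AbSoftImplNWS}, which in fact delivers weak $(2,\omega)$-divisibility of all of $S$.

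The main obstacle is the scaled generalization of \cite[Theorem~6.3]{ThiVil22arX:Glimm} underlying $(3) \Rightarrow (1)$. Although the overall strategy is unchanged, one must verify that each step of the amalgamation respects the scale: property~(V) and ideal-filteredness, stated here only for elements of $\Sigma$, must suffice to rebuild a halving element for a given $x \in \Sigma$, and each intermediate splitting must remain controllable by $\Sigma$. Once this scale-aware bookkeeping is carried out (using all of \axiomO{5}-\axiomO{8}), every other implication in the theorem is a short combination of the results already established in the preceding sections.
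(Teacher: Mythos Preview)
Your proposal is correct and follows essentially the same route as the paper's proof: the equivalence of~(1) and~(2) via \cite[Lemma~3.5]{ThiVil22arX:Glimm}, the equivalence of~(2) (or~(1)) and~(3) via the scaled version of \cite[Theorem~6.3]{ThiVil22arX:Glimm}, the implication $(1)\Rightarrow(4)$ via \cref{prp:2DivGivesStronglySoft}, the implication $(4)\Rightarrow(5)$ directly, and $(5)\Rightarrow(3)$ via \cref{prp:AbSoftImplV} and \cref{prp:AbSoftImplNWS}. The only cosmetic differences are that you spell out a few steps the paper leaves implicit (e.g., obtaining ideal-filteredness in~(4) via $(1)\Rightarrow(3)$, and routing $(4)\Rightarrow(5)$ through \cref{prp:CuEquivAbSoft} rather than verifying the definition directly), and that you highlight the scaled adaptation of \cite[Theorem~6.3]{ThiVil22arX:Glimm} as the main technical point, which the paper likewise defers to the discussion preceding the theorem.
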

\begin{proof}
By \cite[Lemma~3.5]{ThiVil22arX:Glimm}, (1) and~(2) are equivalent.

The results and proofs of Section~6 in \cite{ThiVil22arX:Glimm} have straightforward generalizations to the scaled version of (weak) $(2,\omega)$-divisibility, ideal-filteredness, and property~(V).
In particular, the scaled version of \cite[Theorem~6.3]{ThiVil22arX:Glimm} shows that~(2) and~(3) are equivalent.

Using \cref{prp:2DivGivesStronglySoft}, we see that~(1) implies~(4), and it is clear that~(4) implies~(5).
Finally, by \cref{prp:AbSoftImplV} and \cref{prp:AbSoftImplNWS}, (5) implies~(3).
\end{proof}


\begin{cor}
Let $S$ be a \CuSgp{} satisfying \axiomO{5}-\axiomO{8}. Assume that $S=S_\soft$. 
Then~$S$ is $(2,\omega)$-divisible if and only if $S$ is ideal-filtered.
\end{cor}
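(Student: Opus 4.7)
The plan is to deduce the corollary from \cref{prp:CharDivWithSoft} by equipping $S$ with the trivial scale $\Sigma := S$. First I would verify that $\Sigma := S$ is indeed a scale in the sense recalled at the beginning of \cref{sec:AbSoftEl}: it is closed under suprema of increasing sequences by \axiomO{1}, is trivially downward hereditary, and generates itself as an ideal since every $x \in S$ satisfies $x \leq \infty x$. Under this trivial scale, the scaled notion of ideal-filteredness from \cref{dfn:IFScaled} specializes to the unscaled version of ideal-filteredness referenced in the statement of the corollary.

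Next, I would exploit the hypothesis $S = S_\soft$ to make the "abundance of strongly soft elements" condition vacuous. Indeed, given any $x', x \in S$ with $x' \ll x$, the element $y := x$ is itself strongly soft (because $S = S_\soft$), trivially satisfies $y \leq x$, and satisfies $x' \leq x = y \leq \infty y$, that is $x' \lhd y$. Hence $(S,\Sigma)$ automatically has an abundance of strongly soft elements.

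With these two observations in place, condition~(5) of \cref{prp:CharDivWithSoft} — namely, that $(S,\Sigma)$ be ideal-filtered and have an abundance of strongly soft elements — reduces to the single condition that $S$ be ideal-filtered. The forward implication of the corollary then follows from \cref{prp:CharDivWithSoft}, (1)$\Rightarrow$(5), and the reverse implication follows from (5)$\Rightarrow$(1) combined with the automatic abundance.

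I do not anticipate any serious obstacle: the argument is purely a specialization of the equivalence already established in \cref{prp:CharDivWithSoft}, and the only point requiring a moment of care is checking that the scaled definitions collapse correctly to their unscaled counterparts when $\Sigma = S$.
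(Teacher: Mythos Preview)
Your proposal is correct and follows precisely the intended approach: the paper states this corollary without proof immediately after \cref{prp:CharDivWithSoft}, so the deduction via the trivial scale $\Sigma=S$ and the observation that $S=S_\soft$ trivializes the abundance condition in~(5) is exactly what is meant.
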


\begin{thm}
\label{prp:CharGlimm}
Let $A$ be a \ca{}. Then the following are equivalent:
\begin{enumerate}
\item $A$ has the Global Glimm Property.
\item $A$ has and abundance of soft elements and $(\Cu (A),\Sigma_A )$ is ideal-filtered.
\item $A$ has and abundance of soft elements and $\Cu (A)$ is ideal-filtered.
\end{enumerate}
\end{thm}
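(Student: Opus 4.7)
The plan is to close the cycle (1) $\Rightarrow$ (3) $\Rightarrow$ (2) $\Rightarrow$ (1), leveraging \cref{prp:CharDivWithSoft} with two different choices of scale on $\Cu(A)$: the trivial scale $\Sigma = \Cu(A)$ (which captures the unscaled notion of ideal-filteredness in~(3)) and the natural scale $\Sigma_A$ (for the scaled version in~(2)). The key external input is \cite[Theorem~3.6]{ThiVil22arX:Glimm}, identifying the Global Glimm Property of $A$ with $(2,\omega)$-divisibility of $\Cu(A)$, together with the fact (used throughout the paper) that $\Cu(A)$ satisfies \axiomO{5}-\axiomO{8}.

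For (3) $\Rightarrow$ (2), nothing needs to be done: since $\Sigma_A \subseteq \Cu(A)$, ideal-filteredness of $\Cu(A)$ (i.e., in the sense of \cref{dfn:IFScaled} taken with $\Sigma = \Cu(A)$) restricts immediately to ideal-filteredness of $(\Cu(A),\Sigma_A)$, as the condition becomes universal over a smaller family of pairs $x,y$.

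For (2) $\Rightarrow$ (1), first invoke \cref{prp:EquivAbundance} to convert the abundance of soft elements in $A$ into the abundance of strongly soft elements in $(\Cu(A),\Sigma_A)$. Together with the assumed ideal-filteredness of $(\Cu(A),\Sigma_A)$, this is precisely condition~(5) of \cref{prp:CharDivWithSoft} applied with scale $\Sigma_A$; hence condition~(1) of that theorem holds, i.e., $\Cu(A)$ is $(2,\omega)$-divisible, and so $A$ has the Global Glimm Property by \cite[Theorem~3.6]{ThiVil22arX:Glimm}.

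For (1) $\Rightarrow$ (3), the Global Glimm Property gives $(2,\omega)$-divisibility of $\Cu(A)$, from which \cref{prp:2DivGivesStronglySoft} (combined again with \cref{prp:EquivAbundance}) yields the abundance of soft elements in $A$. To obtain unscaled ideal-filteredness of $\Cu(A)$, apply \cref{prp:CharDivWithSoft} to $\Cu(A)$ equipped with the trivial scale $\Sigma = \Cu(A)$: condition~(1) of that theorem holds by hypothesis, hence condition~(5) holds, and the ideal-filteredness clause in~(5), read with $\Sigma = \Cu(A)$, is precisely the unscaled ideal-filteredness asked for in statement~(3). The only subtle point is thus verifying that \cref{prp:CharDivWithSoft} applies equally well to both scales on $\Cu(A)$; once this is noted, the proof is a matter of assembling the results already established.
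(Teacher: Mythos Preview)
Your proposal is correct and follows essentially the same cycle as the paper's proof. The only cosmetic difference is that for (1)$\Rightarrow$(3) the paper obtains unscaled ideal-filteredness by citing \cite[Theorem~6.3]{ThiVil22arX:Glimm} directly, whereas you obtain it by applying \cref{prp:CharDivWithSoft} with the trivial scale; since \cref{prp:CharDivWithSoft} is explicitly the scaled generalization of that cited theorem, the two routes coincide.
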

\begin{proof}
Let $A$ be a \ca{}. 
It follows from \cite[Theorem~3.6]{ThiVil22arX:Glimm} that $A$ has the Global Glimm Property if and only if $\Cu(A)$ is $(2,\omega)$-divisible. Thus, \cite[Theorem~6.3]{ThiVil22arX:Glimm} together with \cref{prp:2DivGivesStronglySoft} show that (1)~implies~(3), and it is clear that (3)~implies~(2).

Using \cref{prp:CharDivWithSoft}, we know that $\Cu(A)$ is $(2,\omega)$-divisible if and only if $\Cu (A)$ has an abundance of strongly soft elements and $(\Cu (A),\Sigma_A )$ is ideal-filtered. By \cref{prp:EquivAbundance}, $A$ has an abundance of soft elements if and only if strongly soft elements are abundant in $(\Cu(A),\Sigma_A)$. This shows that (2)~implies~(1), as desired.
\end{proof}


\begin{thebibliography}{BGSW22}

\bibitem[APRT21]{AntPerRobThi21Edwards}
\bgroup\scshape{}R.~Antoine\egroup{}, \bgroup\scshape{}F.~Perera\egroup{},
  \bgroup\scshape{}L.~Robert\egroup{}, and \bgroup\scshape{}H.~Thiel\egroup{},
  Edwards' condition for quasitraces on \ca{s},  \emph{Proc. Roy. Soc.
  Edinburgh Sect. A} \textbf{151} (2021), 525--547.

\bibitem[APRT22]{AntPerRobThi22CuntzSR1}
\bgroup\scshape{}R.~Antoine\egroup{}, \bgroup\scshape{}F.~Perera\egroup{},
  \bgroup\scshape{}L.~Robert\egroup{}, and \bgroup\scshape{}H.~Thiel\egroup{},
  \ca{s} of stable rank one and their {C}untz semigroups,  \emph{Duke Math. J.}
  \textbf{171} (2022), 33--99.

\bibitem[APT18]{AntPerThi18TensorProdCu}
\bgroup\scshape{}R.~Antoine\egroup{}, \bgroup\scshape{}F.~Perera\egroup{}, and
  \bgroup\scshape{}H.~Thiel\egroup{}, Tensor products and regularity properties
  of {C}untz semigroups,  \emph{Mem. Amer. Math. Soc.} \textbf{251} (2018),
  viii+191.

\bibitem[APT20a]{AntPerThi20AbsBivariantCu}
\bgroup\scshape{}R.~Antoine\egroup{}, \bgroup\scshape{}F.~Perera\egroup{}, and
  \bgroup\scshape{}H.~Thiel\egroup{}, Abstract bivariant {C}untz semigroups,
  \emph{Int. Math. Res. Not. IMRN} (2020), 5342--5386.

\bibitem[APT20b]{AntPerThi20AbsBivarII}
\bgroup\scshape{}R.~Antoine\egroup{}, \bgroup\scshape{}F.~Perera\egroup{}, and
  \bgroup\scshape{}H.~Thiel\egroup{}, Abstract bivariant {C}untz semigroups
  {II},  \emph{Forum Math.} \textbf{32} (2020), 45--62.

\bibitem[APT20c]{AntPerThi20CuntzUltraproducts}
\bgroup\scshape{}R.~Antoine\egroup{}, \bgroup\scshape{}F.~Perera\egroup{}, and
  \bgroup\scshape{}H.~Thiel\egroup{}, Cuntz semigroups of ultraproduct \ca{s},
  \emph{J. Lond. Math. Soc. (2)} \textbf{102} (2020), 994--1029.

\bibitem[APT11]{AraPerTom11Cu}
\bgroup\scshape{}P.~Ara\egroup{}, \bgroup\scshape{}F.~Perera\egroup{}, and
  \bgroup\scshape{}A.~S. Toms\egroup{}, {$K$}-theory for operator algebras.
  {C}lassification of \ca{s},  in \emph{Aspects of operator algebras and
  applications}, \emph{Contemp. Math.} \textbf{534}, Amer. Math. Soc.,
  Providence, RI, 2011, pp.~1--71.

\bibitem[AVTV23]{AsaThiVil23pre:DimRcSoft}
\bgroup\scshape{}M.~A. Asadi-Vasfi\egroup{},
  \bgroup\scshape{}H.~Thiel\egroup{}, and \bgroup\scshape{}E.~Vilalta\egroup{},
  Dimension and radius of comparison of soft {C}untz semigroups, in preparation,
  2023.

\bibitem[BK19]{BicKos19LLApproxUnits}
\bgroup\scshape{}T.~Bice\egroup{} and \bgroup\scshape{}P.~Koszmider\egroup{},
  \ca{s} with and without {$\ll$}-increasing approximate units,  \emph{J. Math.
  Anal. Appl.} \textbf{477} (2019), 1396--1418.

\bibitem[Bla06]{Bla06OpAlgs}
\bgroup\scshape{}B.~Blackadar\egroup{}, \emph{Operator algebras},
  \emph{Encyclopaedia of Mathematical Sciences} \textbf{122}, Springer-Verlag,
  Berlin, 2006, Theory of \ca{s} and von Neumann algebras, Operator Algebras
  and Non-commutative Geometry, III.

\bibitem[Bos22]{Bos22StableElements}
\bgroup\scshape{}J.~Bosa\egroup{}, Stable elements and property ({S}),
  \emph{J. Math. Anal. Appl.} \textbf{512} (2022), Paper No. 126171, 16.

\bibitem[BGSW22]{BosGabSimWhi22NucDimOStableMaps}
\bgroup\scshape{}J.~Bosa\egroup{}, \bgroup\scshape{}J.~Gabe\egroup{},
  \bgroup\scshape{}A.~Sims\egroup{}, and \bgroup\scshape{}S.~White\egroup{},
  The nuclear dimension of {$\mathcal{O}_\infty$}-stable \ca{s},  \emph{Adv.
  Math.} \textbf{401} (2022), Paper No. 108250, 51.
  
\bibitem[BC09]{BroCiu09IsoHilbModSF}
\bgroup\scshape{}N.~P. Brown\egroup{} and
  \bgroup\scshape{}A.~Ciuperca\egroup{}, Isomorphism of {H}ilbert modules over
  stably finite \ca{s},  \emph{J. Funct. Anal.} \textbf{257} (2009), 332--339.

\bibitem[CRS10]{CiuRobSan10CuIdealsQuot}
\bgroup\scshape{}A.~Ciuperca\egroup{}, \bgroup\scshape{}L.~Robert\egroup{}, and
  \bgroup\scshape{}L.~Santiago\egroup{}, The {C}untz semigroup of ideals and
  quotients and a generalized {K}asparov stabilization theorem,  \emph{J.
  Operator Theory} \textbf{64} (2010), 155--169.

\bibitem[CEI08]{CowEllIva08CuInv}
\bgroup\scshape{}K.~T. Coward\egroup{}, \bgroup\scshape{}G.~A.
  Elliott\egroup{}, and \bgroup\scshape{}C.~Ivanescu\egroup{}, The {C}untz
  semigroup as an invariant for \ca{s},  \emph{J.\ Reine Angew.\ Math.}
  \textbf{623} (2008), 161--193.

\bibitem[Cun76]{Cun76ContinuitySeminorms}
\bgroup\scshape{}J.~Cuntz\egroup{}, On the continuity of semi-norms on operator
  algebras,  \emph{Math. Ann.} \textbf{220} (1976), 171--183.

\bibitem[ERS11]{EllRobSan11Cone}
\bgroup\scshape{}G.~A. Elliott\egroup{}, \bgroup\scshape{}L.~Robert\egroup{},
  and \bgroup\scshape{}L.~Santiago\egroup{}, The cone of lower semicontinuous
  traces on a \ca{},  \emph{Amer. J. Math.} \textbf{133} (2011), 969--1005.

\bibitem[ER06]{EllRor06Perturb}
\bgroup\scshape{}G.~A. Elliott\egroup{} and
  \bgroup\scshape{}M.~R{\o}rdam\egroup{}, Perturbation of {H}ausdorff moment
  sequences, and an application to the theory of \ca{s} of real rank zero,  in
  \emph{Operator {A}lgebras: {T}he {A}bel {S}ymposium 2004}, \emph{Abel Symp.}
  \textbf{1}, Springer, Berlin, 2006, pp.~97--115.

\bibitem[Far02]{Far02SoftCA}
\bgroup\scshape{}C.~Farsi\egroup{}, Soft \ca{s},  \emph{Proc. Edinb. Math. Soc.
  (2)} \textbf{45} (2002), 59--65.

\bibitem[GP23]{GarPer23arX:ModernCu}
\bgroup\scshape{}E.~Gardella\egroup{} and \bgroup\scshape{}F.~Perera\egroup{},
  The modern theory of {C}untz semigroups of \ca{s}, preprint (arXiv:2212.02290
  [math.OA]), 2023.

\bibitem[GT22]{GarThi22arX:WeightedHomo}
\bgroup\scshape{}E.~Gardella\egroup{} and \bgroup\scshape{}H.~Thiel\egroup{},
  Weighted homomorphisms between \ca{s}, preprint (arXiv:2203.16702 [math.OA]),
  2022.

\bibitem[GH82]{GooHan82Stenosis}
\bgroup\scshape{}K.~R. Goodearl\egroup{} and \bgroup\scshape{}D.~E.
  Handelman\egroup{}, Stenosis in dimension groups and {AF} \ca{s},  \emph{J.
  Reine Angew. Math.} \textbf{332} (1982), 1--98.

\bibitem[HRW07]{HirRorWin07CXAlgStabSSA}
\bgroup\scshape{}I.~Hirshberg\egroup{}, \bgroup\scshape{}M.~R{\o}rdam\egroup{},
  and \bgroup\scshape{}W.~Winter\egroup{}, {$C_0(X)$}-algebras, stability and
  strongly self-absorbing \ca{s},  \emph{Math. Ann.} \textbf{339} (2007),
  695--732.

\bibitem[HR98]{HjeRor98Stability}
\bgroup\scshape{}J.~v.~B. Hjelmborg\egroup{} and
  \bgroup\scshape{}M.~R{\o}rdam\egroup{}, On stability of \ca{s},  \emph{J.
  Funct. Anal.} \textbf{155} (1998), 153--170.

\bibitem[Jen77]{Jen77ScatteredCAlg}
\bgroup\scshape{}H.~E.~n. Jensen\egroup{}, Scattered \ca{s},  \emph{Math.
  Scand.} \textbf{41} (1977), 308--314.

\bibitem[KR00]{KirRor00PureInf}
\bgroup\scshape{}E.~Kirchberg\egroup{} and
  \bgroup\scshape{}M.~R{\o}rdam\egroup{}, Non-simple purely infinite \ca{s},
  \emph{Amer. J. Math.} \textbf{122} (2000), 637--666.

\bibitem[KR02]{KirRor02InfNonSimpleCalgAbsOInfty}
\bgroup\scshape{}E.~Kirchberg\egroup{} and
  \bgroup\scshape{}M.~R{\o}rdam\egroup{}, Infinite non-simple \ca{s}: absorbing
  the {C}untz algebra {$\mathcal{O}_\infty$},  \emph{Adv. Math.} \textbf{167}
  (2002), 195--264.

\bibitem[KR15]{KirRor15CentralSeqCharacters}
\bgroup\scshape{}E.~Kirchberg\egroup{} and
  \bgroup\scshape{}M.~R{\o}rdam\egroup{}, When central sequence \ca{s} have
  characters,  \emph{Internat. J. Math.} \textbf{26} (2015), 1550049, 32.

\bibitem[OPR12]{OrtPerRor12CoronaStability}
\bgroup\scshape{}E.~Ortega\egroup{}, \bgroup\scshape{}F.~Perera\egroup{}, and
  \bgroup\scshape{}M.~R{\o}rdam\egroup{}, The corona factorization property,
  stability, and the {C}untz semigroup of a \ca{},  \emph{Int. Math. Res. Not.
  IMRN} (2012), 34--66.

\bibitem[ORT11]{OrtRorThi11CuOpenProj}
\bgroup\scshape{}E.~Ortega\egroup{}, \bgroup\scshape{}M.~R{\o}rdam\egroup{},
  and \bgroup\scshape{}H.~Thiel\egroup{}, The {C}untz semigroup and comparison
  of open projections,  \emph{J. Funct. Anal.} \textbf{260} (2011), 3474--3493.

\bibitem[Ped79]{Ped79CAlgsAutGp}
\bgroup\scshape{}G.~K. Pedersen\egroup{}, \emph{\ca{s} and their automorphism
  groups}, \emph{London Mathematical Society Monographs} \textbf{14}, Academic
  Press, Inc. [Harcourt Brace Jovanovich, Publishers], London-New York, 1979.
  
\bibitem[PT07]{PerTom07Recasting}
\bgroup\scshape{}F.~Perera\egroup{} and \bgroup\scshape{}A.~S. Toms\egroup{},
  Recasting the {E}lliott conjecture,  \emph{Math. Ann.} \textbf{338} (2007),
  669--702.

\bibitem[Rob12]{Rob12LimitsNCCW}
\bgroup\scshape{}L.~Robert\egroup{}, Classification of inductive limits of
  1-dimensional {NCCW} complexes,  \emph{Adv. Math.} \textbf{231} (2012),
  2802--2836.

\bibitem[Rob13]{Rob13Cone}
\bgroup\scshape{}L.~Robert\egroup{}, The cone of functionals on the {C}untz
  semigroup,  \emph{Math. Scand.} \textbf{113} (2013), 161--186.

\bibitem[Rob16]{Rob16RmksZstblProjless}
\bgroup\scshape{}L.~Robert\egroup{}, Remarks on {$\mathcal{Z}$}-stable
  projectionless \ca{s},  \emph{Glasg. Math. J.} \textbf{58} (2016), 273--277.

\bibitem[RR13]{RobRor13Divisibility}
\bgroup\scshape{}L.~Robert\egroup{} and \bgroup\scshape{}M.~R{\o}rdam\egroup{},
  Divisibility properties for \ca{s},  \emph{Proc. Lond. Math. Soc. (3)}
  \textbf{106} (2013), 1330--1370.

\bibitem[RT17]{RobTik17NucDimNonSimple}
\bgroup\scshape{}L.~Robert\egroup{} and \bgroup\scshape{}A.~Tikuisis\egroup{},
  Nuclear dimension and {$\mathcal{Z}$}-stability of non-simple \ca{s},
  \emph{Trans. Amer. Math. Soc.} \textbf{369} (2017), 4631--4670.

\bibitem[R{\o}r92]{Ror92StructureUHF2}
\bgroup\scshape{}M.~R{\o}rdam\egroup{}, On the structure of simple \ca{s}
  tensored with a {UHF}-algebra. {II},  \emph{J. Funct. Anal.} \textbf{107}
  (1992), 255--269.

\bibitem[R{\o}r03]{Ror03FinInfProj}
\bgroup\scshape{}M.~R{\o}rdam\egroup{}, A simple \ca{} with a finite and an
  infinite projection,  \emph{Acta Math.} \textbf{191} (2003), 109--142.

\bibitem[RW10]{RorWin10ZRevisited}
\bgroup\scshape{}M.~R{\o}rdam\egroup{} and \bgroup\scshape{}W.~Winter\egroup{},
  The {J}iang-{S}u algebra revisited,  \emph{J. Reine Angew. Math.}
  \textbf{642} (2010), 129--155.

\bibitem[Thi20a]{Thi20arX:diffuseHaar}
\bgroup\scshape{}H.~Thiel\egroup{}, Diffuse traces and {H}aar unitaries, Amer.
  J. Math. (to appear), preprint (arXiv:2009.06940 [math.OA]), 2020.

\bibitem[Thi20b]{Thi20RksOps}
\bgroup\scshape{}H.~Thiel\egroup{}, Ranks of operators in simple \ca{s} with
  stable rank one,  \emph{Comm. Math. Phys.} \textbf{377} (2020), 37--76.

\bibitem[Thi21]{Thi21GenRnk}
\bgroup\scshape{}H.~Thiel\egroup{}, The generator rank of \ca{s},  \emph{J.
  Funct. Anal.} \textbf{280} (2021), 108874, 34.

\bibitem[TV21a]{ThiVil21DimCu2}
\bgroup\scshape{}H.~Thiel\egroup{} and \bgroup\scshape{}E.~Vilalta\egroup{},
  Covering dimension of {C}untz semigroups {II},  \emph{Internat. J. Math.}
  \textbf{32} (2021), 27~p., Paper No. 2150100.

\bibitem[TV21b]{ThiVil21arX:NowhereScattered}
\bgroup\scshape{}H.~Thiel\egroup{} and \bgroup\scshape{}E.~Vilalta\egroup{},
  Nowhere scattered \ca{s}, J. Noncommut. Geom. (to appear), preprint
  (arXiv:2112.09877 [math.OA]), 2021.

\bibitem[TV22a]{ThiVil22DimCu}
\bgroup\scshape{}H.~Thiel\egroup{} and \bgroup\scshape{}E.~Vilalta\egroup{},
  Covering dimension of {C}untz semigroups,  \emph{Adv. Math.} \textbf{394}
  (2022), 44~p., Article No.~108016.

\bibitem[TV22b]{ThiVil22arX:Glimm}
\bgroup\scshape{}H.~Thiel\egroup{} and \bgroup\scshape{}E.~Vilalta\egroup{},
  The {G}lobal {G}limm {P}roperty, Trans. Amer. Math. Soc. (to appear),
  preprint (arXiv:2204.13059 [math.OA]), 2022.

\bibitem[Tom08]{Tom08ClassificationNuclear}
\bgroup\scshape{}A.~S. Toms\egroup{}, On the classification problem for nuclear
  \ca{s},  \emph{Ann. of Math. (2)} \textbf{167} (2008), 1029--1044.

\bibitem[WZ09]{WinZac09CpOrd0}
\bgroup\scshape{}W.~Winter\egroup{} and \bgroup\scshape{}J.~Zacharias\egroup{},
  Completely positive maps of order zero,  \emph{M\"unster J. Math.} \textbf{2}
  (2009), 311--324.
  
\bibitem[Win12]{Win12NuclDimZstable}
\bgroup\scshape{}W.~Winter\egroup{}, Nuclear dimension and
  {$\mathcal{Z}$}-stability of pure \ca{s},  \emph{Invent. Math.} \textbf{187}
  (2012), 259--342.


\end{thebibliography}

\providecommand{\bysame}{\leavevmode\hbox to3em{\hrulefill}\thinspace}
\providecommand{\noopsort}[1]{}
\providecommand{\mr}[1]{\href{http://www.ams.org/mathscinet-getitem?mr=#1}{MR~#1}}
\providecommand{\zbl}[1]{\href{http://www.zentralblatt-math.org/zmath/en/search/?q=an:#1}{Zbl~#1}}
\providecommand{\jfm}[1]{\href{http://www.emis.de/cgi-bin/JFM-item?#1}{JFM~#1}}
\providecommand{\arxiv}[1]{\href{http://www.arxiv.org/abs/#1}{arXiv~#1}}
\providecommand{\doi}[1]{\url{http://dx.doi.org/#1}}
\providecommand{\MR}{\relax\ifhmode\unskip\space\fi MR }
\providecommand{\MRhref}[2]{%
  \href{http://www.ams.org/mathscinet-getitem?mr=#1}{#2}
}
\providecommand{\href}[2]{#2}

\end{document}